\newtheorem{theorem}{Theorem}[section]
\newtheorem{lemma}[theorem]{Lemma}
\theoremstyle{definition}
\newtheorem{definition}[theorem]{Definition}
\theoremstyle{remark}
\newtheorem{remark}[theorem]{Remark}
\numberwithin{equation}{section}
\theoremstyle{problem}
\newtheorem{problem}[theorem]{Problem}
\DeclareMathOperator*{\esssup}{ess\,sup}
\DeclareMathOperator*{\essinf}{ess\,inf}
\DeclareMathOperator*{\supp}{supp}
\begin{document}

\title[Maximum principles and inverse source problem]
 {Studies on an inverse source problem for a space-time fractional diffusion equation by constructing a strong maximum principle}

\author[J.X.Jia]{Junxiong Jia}
\address{Department of Mathematics,
Xi'an Jiaotong University,
 Xi'an
710049, China; }
\email{jjx323@mail.xjtu.edu.cn}

\author[J. Peng]{Jigen Peng}
\address{Department of Mathematics,
Xi'an Jiaotong University,
 Xi'an
710049, China; Beijing Center for Mathematics and Information Interdisciplinary Sciences (BCMIIS);}
\email{jgpen@mail.xjtu.edu.cn}

\author[J. Yang]{Jiaqing Yang}
\address{Department of Mathematics,
Xi'an Jiaotong University,
 Xi'an
710049, China;}
\email{jiaq.yang@mail.xjtu.edu.cn}

\subjclass[2010]{35R30, 35R11, 35B50}



\keywords{Inverse source problem, Fractional diffusion equation, Fractional Laplace operator, Harnack's inequality,
Strong maximum principle}

\begin{abstract}
In this paper, we focus on a space-time fractional diffusion equation with the generalized Caputo's fractional derivative operator
and a general space nonlocal operator (with the fractional Laplace operator as a special case).
A weak Harnack's inequality has been established by using a special test function and some properties of the space nonlocal operator.
Based on the weak Harnack's inequality, a strong maximum principle has been obtained which is an important characterization
of fractional parabolic equations. With these tools, we establish a uniqueness result for an inverse source problem
on the determination of the temporal component of the inhomogeneous term.
\end{abstract}

\maketitle


\section{Introduction}\label{SectionIntroduction}

Fractional partial differential equation becomes a popular research topic for its wide applications
in physics \cite{Metzler20001}, geological exploration \cite{doi:10.1190/geo2013-0245.1} and so on.
For the mathematical properties, there are also a lot of studies e.g. \cite{Jia2014605,Li2012476}.
In this paper, we focus on a general fractional diffusion equation.
Before going further, let us introduce some notations.
For a real number $\gamma \in \mathbb{R}$, denote $g_{\gamma}(t)$ by
\begin{align}\label{galphaDefinition1}
g_{\gamma}(t) = \frac{t^{\gamma - 1}}{\Gamma(\gamma)},
\end{align}
where $\Gamma(\cdot)$ represents the usual Gamma function.
The notation $\partial_{t}^{\alpha}\cdot$ denotes the Riemann-Liouville fractional derivative defined by
\begin{align}\label{CaputoDefinition1}
\partial_{t}^{\alpha}f(t) := \frac{d}{dt}(g_{1-\alpha}*f(\cdot))(t),
\end{align}
where ``$*$'' denotes the usual convolution operator.
The space-time nonlocal diffusion equation studied in this paper has the following form
\begin{align}\label{TimeSpaceEquation1}
\left\{\begin{aligned}
\partial_{t}^{\alpha}(u(x,t)-u_{0}(x)) + Lu(x,t) & = f(x,t) \quad \text{in }\Omega\times[0,T], \\
u(x,t) & = 0 \quad\quad\quad\, \text{in }\mathbb{R}^{n}\backslash\Omega, \, t\geq 0, \\
u(x,0) & = u_{0}(x) \quad\,\, \text{in }\Omega,\, \text{for }t = 0,
\end{aligned}\right.
\end{align}
where $\alpha\in(0,1)$ and $L$ is an integro-differential operator of the form
\begin{align}\label{spatialGeneralDef}
\begin{split}
Lu(t,x) = \text{p.v.}\int_{\mathbb{R}^{n}}[u(t,x)-u(t,y)]k(x,y)dy.
\end{split}
\end{align}
The time-fractional operator used here could be called the generalized Caputo's fractional derivative.
For more details, we refer to \cite{Peng2012786}.
The kernel $k : \mathbb{R}^{n}\times\mathbb{R}^{n} \rightarrow [0,\infty)$, $(x,y) \mapsto k(x,y)$ is assumed to be
measurable with a certain singularity at the diagonal $x = y$.

Note that in the case $k(x,y) = c_{n,\beta}/|x-y|^{n+2\beta}$ with constant
$c_{n,\beta} = \frac{\beta 2^{2\beta}\Gamma(\frac{n+2\beta}{2})}{\pi^{n/2}\Gamma(1-\beta)}$, the integral-differential
operator $L$ defined in (\ref{spatialGeneralDef}) is equal to $(-\Delta)^{\beta}$
which is the pseudo-differential operator with symbol $|\xi|^{2\beta}$. Thus the operator
$L$ could be seen as a generalized fractional Laplace operator. And the following
space-time fractional diffusion equation is a special case of equation (\ref{TimeSpaceEquation1})
\begin{align}\label{TimeSpaceEquation2}
\left\{\begin{aligned}
\partial_{t}^{\alpha}(u(x,t)-u_{0}(x)) + (-\Delta)^{\beta}u(x,t) & = f(x,t) \quad \text{in }\Omega\times[0,T], \\
u(x,t) & = 0 \quad\quad\quad\, \text{in }\mathbb{R}^{n}\backslash\Omega, \, t\geq 0, \\
u(x,0) & = u_{0}(x) \quad\,\, \text{in }\Omega,\, \text{for }t = 0,
\end{aligned}\right.
\end{align}
with $\alpha,\beta \in (0,1)$.

Now, let us specify the assumptions on the kernels $k(\cdot,\cdot)$. We assume the kernels $k$ are of the form $k(x,y) = a(x,y)k_{0}(x,y)$ for
some measurable functions $k_{0} : \mathbb{R}^{n}\times\mathbb{R}^{n}\rightarrow[0,\infty]$ and
$a : \mathbb{R}^{n}\times\mathbb{R}^{n}\rightarrow[1/2,1]$ which are symmetric with respect to $x$ and $y$.

Fix $\beta_{0} \in (0,1)$ and $\Lambda \geq \max(1,\beta_{0}^{-1})$. A kernel $k$ belongs to $\mathcal{R}(\beta_{0},\Lambda)$,
if there is $\beta\in(\beta_{0},1)$ such that $k_{0}$ satisfies the following properties: for some constant $C > 0$,
every $x_{0}\in \mathbb{R}^{n}$,
$\rho > 0$, $B_{\rho}(x_{0})\subset\Omega$ and $u\in H^{\beta}(B_{\rho}(x_{0}))$
\begin{align}\label{Assumption1}
\rho^{-2}\int_{|x_{0}-y|\leq\rho}|x_{0}-y|^{2}k_{0}(x_{0},y)dy + \int_{|x_{0}-y|>\rho}k_{0}(x_{0},y)dy \leq \Lambda \rho^{-2\beta},
\end{align}
\begin{align}\label{Assumption2}
\begin{split}
& C^{-1}\Lambda^{-1}\int_{B}\int_{B}(v(x)-v(y))^{2}k_{0}(x,y)dxdy \leq c_{n,\beta}\int_{B}\int_{B}\frac{(v(x)-v(y))^{2}}{|x-y|^{n+2\beta}}dxdy \\
& \quad
\leq C\Lambda \int_{B}\int_{B} (v(x)-v(y))^{2}k_{0}(x,y)dxdy, \quad \text{where }B = B_{\rho}(x_{0}).
\end{split}
\end{align}

Inverse problems for fractional diffusion equations are a rather new research topic and there are
already a lot of studies.
In 2009, an inverse problem related to a one dimensional time-fractional space-integer order diffusion equation
has been studied in \cite{Jin2009Uni}.
In 2010, L. Li and J. Liu \cite{Li2013Total} study backward diffusion problem for a time-fractional space-integer order
diffusion equation by generalizing the total variation regularization methods.
In 2011, Y. Zhang and X. Xu \cite{Ying2011Inver} study an inverse source
problem related to a time-fractional space-integer order diffusion equation by the method of the eigenfunction expansion
and numerical methods are also been presented. In 2013, L. Miller and M. Yamamoto \cite{miller2013coefficient} investigate
an inverse problem of determining spatial coefficient related to a time-fractional space-integer order diffusion equation.
Recently, backward diffusion problem for a space-time fractional diffusion equation under the Bayesian statistical framework
has been studied in \cite{jia2015bayesian} and the same backward diffusion problem has also been studied by using
variable total variation regularization methods in \cite{jia2016variable}.
In 2015, B. Jin and W. Rundell \cite{jin2015tutorial} provide a long review article and
they also show some further results about inverse problems related to the anomalous diffusion processes in their review.

From the above mentioned work,
we could find out that the existing results are mainly concentrate on time-fractional diffusion equation, rather on the more
general space-time fractional diffusion equation.
As pointed out in B. Jin and W. Rundell's review article \cite{jin2015tutorial}, the study of space-fractional inverse problem, either
theoretical or numerical, is fairly scarce.
And this is partly attributed to the relatively poor understanding of forward problems for PDEs with a space fractional derivative.
Hence, in this paper, we try to study the forward problem (\ref{TimeSpaceEquation1}) more deeply.
Through the tools established for the general space-time fractional diffusion equation, we hope to obtain a uniqueness result for
an inverse source problem on the determination of the temporal component of the inhomogeneous term.

More precisely, we will assume the inhomogeneous term to be of the form $\rho(t)g(x)$ with some appropriate assumptions,
which will be specified in Section \ref{InverseSection}.
Let $x_{0} \in \Omega$ and $T > 0$ be arbitrarily given, and $u$ be the solution to (\ref{TimeSpaceEquation1}) with $u_{0} = 0$.
Provided that $g(\cdot)$ is known, determine $\rho(t)\,(0\leq t\leq T)$ by the single point observation data $u(x_{0},t)\,(0\leq t\leq T)$.
Same type of problems are studied in \cite{liu2015strong,Sakamoto2011426} for time-fractional space-integer order diffusion equations.
Recently, Y. Liu, W. Rundell and M. Yamamoto\cite{liu2015strong} prove a strong maximum principle
which holds almost everywhere (roughly speaking).
Inspired by their work, we attempt to prove a strong maximum principle for the general fractional diffusion equation (\ref{TimeSpaceEquation1}).
Our methods are totally different from the methods used in \cite{liu2015strong}.
Actually, we prove a weak Harnack's inequality for the general fractional diffusion equation (\ref{TimeSpaceEquation1})
and then the strong maximum principle will be a direct corollary as for the integer-order diffusion equations.
The contributions of this paper could be summarized as follows:
\begin{itemize}
  \item When the kernel $k$ in the definition of $L$ belongs to some $\mathcal{R}(\beta_{0},\Lambda)$,
  we prove a weak Harnack's inequality, which may be the first result about Harnack' inequality for the space-time fractional diffusion
  equations. Specific results will be shown in Section \ref{WeakHarnackSection}.
  \item A strong maximum principle has been proved, which provides a useful characterization of the solutions of
  the space-time fractional diffusion equations. Rigorous statements will be shown in Section \ref{MaximumPrincipleSection}.
  The strong maximum principle could be used to a lot of problems, especially for some inverse problems e.g.
  \cite{Victor1999Some,Yuri2013Uniqueness}.
  \item Under a little stronger assumptions about the kernel $k$, we prove a uniqueness result for the above mentioned inverse source problem.
  Detailed assumptions and results will be shown in Section \ref{InverseSection}.
\end{itemize}

The organization of this paper is as follows. In Section \ref{PreSection}, some preliminary knowledge and results will be shown.
These knowledge include the definition of fractional Sobolev space, the definition of Yosida approximation.
Two equivalent definitions of weak solution will also be presented.
In the last part of Section \ref{PreSection}, a unique weak solution of equation (\ref{TimeSpaceEquation1}) will be constructed.
Then, a weak Harnak's inequality has been proved in Section \ref{WeakHarnackSection} and the proof has been divided into four steps.
In Section \ref{MaximumPrincipleSection}, a weak and a strong maximum principle have been proved which is the main tools
for our investigation on the inverse source problems.
In Section \ref{InverseSection}, more regularity properties of the weak solution has been proved under a little stronger assumptions
about the kernel $k$ defined in the definition of $L$. Then a fractional Duhamel's principle has been established.
At last, a uniqueness result for the inverse source problem has been obtained.
In Appendix, we provide some useful lemmas.

\section{Preliminaries}\label{PreSection}

In this section, we provide some necessary preliminary knowledge on function space theory,
Yosida approximation and equivalent definitions of weak solutions for our purposes.

Here, let us specify the assumptions about the spatial dimension in this paper.
In the following parts of this paper, the spatial dimension $n$ equal to $2$ or $3$ and we will not
mention this assumption again in each theorem or lemma shown below.

\subsection{A short introduction to some function spaces}\label{SpaceSection}

Let us provide some general notations:
\begin{itemize}
  \item We denote $W{^{s,p}}$ be the Sobolev space with $s$-times derivative belongs to $L^{p}$ space.
    For a Banach space $X$, we denote ${_{0}}W{^{s,p}}([0,T];X)$ be the Sobolev space with functions vanishing at $t = 0$.
    When $p = 2$, we denote ${_{0}}W{^{s,2}}([0,T];X)$ as ${_{0}}H^{s}([0,T];X)$.
  \item By $\text{inf}\,u$ and $\text{sup}\,u$ we denote the essential infimum and the essential supremum of
    a given function $u$ respectively.
  \item Without additional specifications, we denote $B(x_{0},r)$ be a ball in $\mathbb{R}^{n}$ centered at $x_{0}$ with radius $r$.
    If $x_{0} = 0$, we denote $B_{r} := B(0,r)$ for concisely.
  \item For a function $f\in C^{1}(\mathbb{R}^{n})$, sometimes, we denote $\frac{d}{dt}f(t)$ as $\dot{f}(t)$.
  \item In all the following parts of this paper, we denote
  $c_{n,\beta} = \frac{\beta 2^{2\beta}\Gamma(\frac{n+2\beta}{2})}{\pi^{n/2}\Gamma(1-\beta)}$ and
  denote $\mathcal{S}^{n-1}$ be the surface of a unit ball in $\mathbb{R}^{n}$.
  \item The notation ``$*$'' denotes the usual convolution operator defined as
  $$(f*g)(t) = \int_{0}^{t}f(t-s)g(s)ds$$ with $t > 0$ for two appropriate functions.
  \item Notation $C$ represents a general constant, which may different from line to line.
\end{itemize}

Now, some function spaces used in this paper will be explained.
Let $\Omega \subset \mathbb{R}^{n}$ be a bounded domain, then the Sobolev space of fractional order $s \geq 0$ is defined by
\begin{align}\label{2defSobolevSpace}
H^{s}(\Omega) = \left\{ u \in L^{2}(\Omega) \,: \, \frac{|u(x)-u(y)|}{|x-y|^{s + n/2}}\in L^{2}(\Omega\times\Omega) \right\},
\end{align}
endowed with the norm
\begin{align}\label{2NormSobolevSpace}
\begin{split}
\|u\|_{H^{s}(\Omega)}^{2} = \|u\|_{L^{2}(\Omega)}^{2}
+ c_{n,s}\int_{\Omega}\int_{\Omega}\frac{|u(x)-u(y)|^{2}}{|x-y|^{n+2s}}dxdy.
\end{split}
\end{align}
We denote by $H_{0}^{s}(\Omega)$ the completion of $C_{0}^{\infty}(\Omega)$ under $\|\cdot\|_{H^{s}(\mathbb{R}^{n})}$
and by $H^{-s}$ the dual of $H_{0}^{s}$.

According to the probabilistic interpretation about the space-nonlocal integral-differential operator \cite{Xavier2014Boundary,Metzler20001},
the boundary condition should be changed to the exterior boundary condition which will be specified later.
In order to cope with this situation, we define $H_{e}^{s}(\Omega)$ ($s\in \mathbb{R}$) as follow
\begin{align}\label{DefineHExteriorSobolev1}
H_{e}^{s}(\Omega) := \left\{ u \in H^{s}(\mathbb{R}^{n}) \, : \, u = 0 \text{ in }\mathbb{R}^{n}\backslash\Omega \right\},
\end{align}
and $L_{e}^{p}(\Omega)$ ($1 \leq p \leq \infty$) as
\begin{align}\label{DefineLExteroriLegesgue1}
L_{e}^{p}(\Omega) := \left\{ u \in L_{e}^{p}(\mathbb{R}^{n}) \, : \, u = 0 \text{ in }\mathbb{R}^{n}\backslash\Omega \right\}.
\end{align}
For $p \in [1,\infty)$, denote
\begin{align*}
\begin{split}
& V_{p}([0,T];\Omega) := \Big\{ u \in L^{2p}([0,T];L_{e}^{2}(\Omega)) \cap L^{2}([0,T];H_{e}^{\beta}(\Omega)) \\
& \quad
\text{ such that }g_{1-\alpha}*(u-u_{0}) \in C([0,T];L_{e}^{2}(\Omega)), \text{ and }(g_{1-\alpha}*(u-u_{0}))|_{t=0} = 0 \Big\},
\end{split}
\end{align*}

Recalling Theorem 3.3 in \cite{mclean2000strongly}, if $\Omega$ is a bounded Lipschitz domain and $s \geq 0$, we know that
\begin{align}\label{EquivalenceNorm}
H_{0}^{s}(\Omega) = H_{e}^{s}(\Omega) \quad \text{provided }s\notin \left\{ \frac{1}{2},\frac{3}{2},\frac{5}{2},\cdots \right\}.
\end{align}
This equivalence relation is important for our later deduction.


\subsection{The Yosida approximation}\label{2YoshidaSection}

The Yosida approximation of the time-fractional derivative operator is an important tool for
analyzing regularity properties of equations with time-fractional derivative operator. For reader's convenience,
we provide a short introduction. For detailed references, we refer to \cite{Zacher2008137,RicoZacher2009Weak,zacher2010weak,Zacher2012}.
Let $0 < \alpha < 1$, $1 \leq p < \infty$, $T > 0$, and $X$ be a real Banach space.
Then the fractional derivative operator defined by
\begin{align*}
Bu = \frac{d}{dt}(g_{1-\alpha}*u), \quad
D(B) = \{ u\in L^{p}([0,T];X) \,:\, g_{1-\alpha}*u \in {_{0}}W{^{1,p}}([0,T];X) \}.
\end{align*}
Its Yosida approximation $B_{m}$, defined by $B_{m} = mB(m+B)^{-1}$, $m\in\mathbb{N}$, enjoy the property that for
any $u \in D(B)$, one has $B_{m}u \rightarrow Bu$ in $L^{p}([0,T];X)$ as $m\rightarrow\infty$.
Further, one has the representation
\begin{align*}
B_{m}u = \frac{d}{dt}(g_{1-\alpha,m}*u), \quad u\in L^{p}([0,T];X),\,\,m\in\mathbb{N},
\end{align*}
where $g_{1-\alpha,m} = m s_{\alpha,m}$, and $s_{\alpha,m}$ is the unique solution of the scalar-valued Volterra equation
\begin{align*}
s_{\alpha,m}(t) + m(s_{\alpha,m}*g_{\alpha})(t) = 1,\quad t > 0,\,\,m\in\mathbb{N}.
\end{align*}
Let $h_{\alpha,m} \in L^{1}_{\text{loc}}(\mathbb{R}^{+})$ be the resolvent kernel associated with $mg_{\alpha}$, that is
\begin{align*}
h_{\alpha,m}(t) + m(h_{\alpha,m}*g_{\alpha})(t) = mg_{\alpha}(t), \quad t > 0,\,\,m\in\mathbb{N}.
\end{align*}
In addition, we have $g_{1-\alpha,m} = ms_{\alpha,m} = g_{1-\alpha}*h_{\alpha,m}$, $m\in\mathbb{N}$.
Next, we list some important properties about $g_{\alpha,m}$ and $h_{\alpha,m}$:
\begin{itemize}
  \item The kernel $g_{1-\alpha,m}$ are nonnegative and nonincreasing for all $m\in\mathbb{N}$, and $g_{1-\alpha,m}\in W^{1,1}([0,T])$;
  \item For any function $f \in L^{p}([0,T];X)$ with $1\leq p<\infty$ and $X$ represents a Banach space, there holds
  $h_{\alpha,m}*f \rightarrow f$ in $L^{p}([0,T];X)$ as $m\rightarrow\infty$;
  \item $g_{1-\alpha,m} \rightarrow g_{1-\alpha}$ in $L^{1}([0,T])$ as $m\rightarrow\infty$ and
  $B_{m}u \rightarrow Bu$ in $L^{p}([0,T];X)$ as $m\rightarrow\infty$.
\end{itemize}
In all the following parts of this paper, we denote $h_{m} = h_{\alpha,m}$, $m \in \mathbb{N}$ for concisely.


\subsection{Concept of weak solutions}\label{DefineWeakSection}

In order to introduce the concept of weak solutions for equation (\ref{TimeSpaceEquation1}) with $L$ defined in (\ref{spatialGeneralDef}),
we define a nonlocal bilinear form associated to $L$ by
\begin{align}\label{2WeakSolutionBilinear}
\mathcal{E}(u,v) = \frac{1}{2}\int_{\mathbb{R}^{n}}\int_{\mathbb{R}^{n}}
[u(t,x)-u(t,y)][v(t,x)-v(t,y)]k(x,y)dxdy.
\end{align}
\begin{definition}\label{BoundaryDefinition1}
Define the following concepts regarding the domain of the solution:
\begin{enumerate}
  \item $Q_{T} := \Omega\times(0,T) \subset \mathbb{R}^{n+1}$.
  \item Lateral boundary of $Q_{T}$: $\partial_{L}Q_{T}:=\partial\Omega\times[0,T]$.
  \item Parabolic boundary of $Q_{T}$: $\partial_{p}Q_{T} := (\Omega\times\{0\}) \cup \partial_{L}Q_{T}$.
\end{enumerate}
\end{definition}

We say that a function $u \in L^{\infty}([0,T];L^{\infty}(\mathbb{R}^{n}))$ is a weak solution (supersolution or subsolution)
of (\ref{TimeSpaceEquation1}) in $Q_{T}$ with $f \in L^{\infty}(Q_{T})$
and $u_{0} \in L_{e}^{2}(\Omega)$, if $u \in V_{p}([0,T];\Omega)$ with $p \in [1,\infty)$ (defined in Section \ref{SpaceSection}).
For any (nonnegative) test function
\begin{align}\label{TestFun1}
\eta \in H_{e}^{1,\beta}(Q_{T}) := W^{1,2}([0,T];L_{e}^{2}(\Omega)) \cap L^{2}([0,T];H_{e}^{\beta}(\Omega))
\cap L^{\infty}([0,T];L^{\infty}(\mathbb{R}^{n}))
\end{align}
with $\eta|_{t=T} = 0$ there holds
\begin{align}\label{WeakFormulation1}
\begin{split}
\int_{0}^{T}\int_{\Omega}-\eta_{t}\left[ g_{1-\alpha}*(u-u_{0}) \right]dxdt
+ \int_{0}^{T}\mathcal{E}(u,\eta)dt
= (\geq\text{ or }\leq) \int_{0}^{T}\int_{\Omega}f\eta dxdt.
\end{split}
\end{align}
In order to acquire some regularity information and deduce Harnack's inequality in the following sections,
we would like to provide another equivalent definition of the weak solutions.

\begin{lemma}\label{WeakEquivalent1}
Let $u \in V_{p}([0,T];\Omega)$ be a weak solution (supersolution or subsolution)
of equation (\ref{TimeSpaceEquation1}) if and only if for any (nonnegative) function
$\psi \in H_{e}^{\beta}(\Omega)\cap L^{\infty}(\mathbb{R}^{n})$ one has
\begin{align}\label{WeakFormulation2}
\begin{split}
&\int_{\Omega}\psi \partial_{t}\left[ g_{1-\alpha,m}*(u-u_{0}) \right]dx
+ \mathcal{E}(h_{m}*u,\psi)   \\
&\quad\quad\quad\quad\quad\quad\quad\quad\quad\quad
= (\geq \text{ or }\leq) \int_{\Omega}(h_{m}*f)\psi dx \, \text{ a.e.}\,\,t\in (0,T), \, m\in\mathbb{N}.
\end{split}
\end{align}
\end{lemma}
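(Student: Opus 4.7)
My plan is to prove both implications by testing (\ref{WeakFormulation1}) against separated test functions $\eta(x,t)=\psi(x)\varphi(t)$ and then passing to and from the Yosida-regularized identity via convolution with the resolvent kernel $h_m$. The key ingredients will be the smoothing properties of $h_m$, the fact that convolution in $t$ commutes with the bilinear form $\mathcal{E}(\cdot,\psi)$ in its first slot (by Fubini, using the integrability provided by $u\in L^{2}([0,T];H_{e}^{\beta}(\Omega))$), and the initial condition $(g_{1-\alpha}*(u-u_{0}))|_{t=0}=0$ built into $V_{p}([0,T];\Omega)$.

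For the forward direction I would first take $\eta(x,t)=\psi(x)\varphi(t)$ with $\psi\in H_{e}^{\beta}(\Omega)\cap L^{\infty}(\mathbb{R}^{n})$ and $\varphi\in C^{1}([0,T])$ with $\varphi(T)=0$. Ranging $\varphi$ over all such scalar functions, (\ref{WeakFormulation1}) is equivalent to the statement that
$$G(t):=\int_{\Omega}\psi(x)[g_{1-\alpha}*(u-u_{0})](x,t)\,dx$$
is absolutely continuous with $G(0)=0$ and
$$G'(t)=\int_{\Omega}f(x,t)\psi(x)\,dx-\mathcal{E}(u(\cdot,t),\psi)\qquad\text{a.e.\ }t.$$
Convolving in time with $h_{m}$ and using $\tfrac{d}{dt}(h_{m}*G)=h_{m}*G'$ (which holds because $G(0)=0$), together with the associativity $h_{m}*(g_{1-\alpha}*(u-u_{0}))=g_{1-\alpha,m}*(u-u_{0})$ and the identity $h_{m}*\mathcal{E}(u,\psi)=\mathcal{E}(h_{m}*u,\psi)$, produces (\ref{WeakFormulation2}) for a.e. $t$.

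For the converse I would multiply (\ref{WeakFormulation2}) by $\varphi\in C^{1}([0,T])$ with $\varphi(T)=0$, integrate over $[0,T]$, and integrate the first term by parts; the initial condition $(g_{1-\alpha,m}*(u-u_{0}))|_{t=0}=0$ (inherited from $V_{p}$) and $\varphi(T)=0$ kill both boundary contributions. This yields the $h_{m}$-regularized identity
$$-\int_{0}^{T}\!\!\varphi'(t)\!\int_{\Omega}\!\psi[g_{1-\alpha,m}*(u-u_{0})]\,dx\,dt+\int_{0}^{T}\!\!\varphi(t)\mathcal{E}(h_{m}*u,\psi)\,dt=\int_{0}^{T}\!\!\varphi(t)\!\int_{\Omega}\!(h_{m}*f)\psi\,dx\,dt,$$
and the three convergence properties listed in Subsection \ref{2YoshidaSection} (namely $g_{1-\alpha,m}\to g_{1-\alpha}$ in $L^{1}$, $h_{m}*u\to u$ in $L^{2}([0,T];H_{e}^{\beta}(\Omega))$, and $h_{m}*f\to f$ in $L^{2}(Q_{T})$) let me pass to $m\to\infty$ and recover (\ref{WeakFormulation1}) with $\eta=\psi\varphi$. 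A density argument for separated tensor products inside the test class $H_{e}^{1,\beta}(Q_{T})\cap\{\eta|_{t=T}=0\}$ then extends the identity to arbitrary admissible $\eta$; the sub/supersolution versions follow verbatim upon restricting $\varphi\ge 0$ and invoking the positivity of $h_{m}$.

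The main obstacle will be the rigorous exchange of $h_{m}*$ with $\partial_{t}$ and with $\mathcal{E}(\cdot,\psi)$ in the forward direction. I would dispose of the former by showing that $h_{m}*G$ is absolutely continuous (using $g_{1-\alpha,m}\in W^{1,1}([0,T])$ and $G\in L^{1}$), and of the latter by Fubini's theorem applied against the kernel $h_{m}(t-s)\,k(x,y)$ on the region where $u(\cdot,s)\in H_{e}^{\beta}(\Omega)$. A minor secondary point is verifying density of separated products in $H_{e}^{1,\beta}(Q_{T})$ compatibly with the $L^{\infty}$-in-$(x,t)$ constraint, which is standard via truncation and Bochner-space approximation.
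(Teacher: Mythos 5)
Your strategy is sound and genuinely different from the paper's. In the direction ``weak solution $\Rightarrow$ (\ref{WeakFormulation2})'', the paper does not pass through a scalar identity for $G(t)$: it inserts the special test function $\eta(x,t)=\int_{t}^{T}h_{m}(\sigma-t)\varphi(\sigma,x)\,d\sigma$ (see (\ref{TestFun2})), so that the Yosida regularization is built into the test function, obtains the time-integrated inequality (\ref{WeakFormWithT}) for all admissible $\varphi$, and then localizes in time by taking $\varphi=\chi_{(t_{1},t_{2})}\psi$ and invoking the Lebesgue differentiation theorem. Your route — separated test functions $\psi(x)\varphi(t)$, a scalar Volterra-type identity for $G(t)=\int_{\Omega}\psi\,[g_{1-\alpha}*(u-u_{0})]\,dx$, then convolution with $h_{m}$ using $g_{1-\alpha,m}=h_{m}*g_{1-\alpha}$ and $h_{m}*\mathcal{E}(u,\psi)=\mathcal{E}(h_{m}*u,\psi)$ — avoids checking admissibility of the paper's nonlocal-in-time test function, at the price of the commutation lemmas you flag (all justified, since $g_{1-\alpha,m}\in W^{1,1}$ and $u\in L^{2}([0,T];H_{e}^{\beta}(\Omega))$). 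Conversely, for ``(\ref{WeakFormulation2}) $\Rightarrow$ weak solution'' the paper simply substitutes $\psi=\eta(t,\cdot)$ for a.e.\ $t$, integrates in $t$, and passes $m\to\infty$ via Lemma \ref{ConvergenceAppendix}, whereas you go through tensor products and a density argument; the paper's substitution is shorter and sidesteps density, while yours only needs the convergence $h_{m}*u\to u$ in $L^{2}([0,T];H_{e}^{\beta}(\Omega))$ against a fixed $\psi$.

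Two points need repair before your argument covers the full statement. First, the super/subsolution cases of your forward direction are not ``verbatim'': with only nonnegative $\varphi$, ranging over scalar test functions does \emph{not} give that $G$ is absolutely continuous with $G'(t)=\int_{\Omega}f\psi\,dx-\mathcal{E}(u(\cdot,t),\psi)$; it gives only the distributional inequality, i.e.\ $t\mapsto G(t)-\int_{0}^{t}\bigl(\int_{\Omega}f\psi\,dx-\mathcal{E}(u(\cdot,s),\psi)\bigr)ds$ is nondecreasing (its derivative may contain a nonnegative singular part), and then the identity $\frac{d}{dt}(h_{m}*G)=h_{m}*G'$ you rely on is no longer available. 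The fix is exactly the ingredient you name — nonnegativity of $h_{m}$ — but it must be used differently: convolution with a nonnegative kernel preserves monotonicity of that nondecreasing part, while the absolutely continuous part behaves as in the equality case, yielding $\frac{d}{dt}(h_{m}*G)\geq h_{m}*\bigl(\int_{\Omega}f\psi\,dx-\mathcal{E}(u(\cdot,\cdot),\psi)\bigr)$ a.e.; this short extra step should be written out, since the lemma is applied in the paper precisely to supersolutions. Second, in your converse the density argument must respect the sign constraint: an arbitrary nonnegative $\eta\in H_{e}^{1,\beta}(Q_{T})$ with $\eta|_{t=T}=0$ has to be approximated by \emph{nonnegative} finite sums of nonnegative separated products in $W^{1,2}([0,T];L_{e}^{2}(\Omega))\cap L^{2}([0,T];H_{e}^{\beta}(\Omega))$ (e.g.\ mollification in time followed by piecewise-linear time interpolation with hat functions does this), which is more than the unconstrained tensor-product density you call standard.
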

\begin{proof}
Because the proofs of weak solutions, supersolutions and subsolutions are almost same, here, we only provide
the proof of weak supersolutions.
The `if' part is readily seen as follows. Given an arbitrary nonnegative $\eta \in H_{e}^{1,\beta}(Q_{T})$ satisfying $\eta|_{t=T} = 0$,
we take in (\ref{WeakFormulation2}) $\psi(x) = \eta(t,x)$ for any fixed $t\in (0,T)$, integrate from $t = 0$ to $t = T$,
and integrate by parts with respect to the time variable. Then by using the approximating properties of the kernels $h_{m}$
(details could be find in Lemma \ref{ConvergenceAppendix} in Appdenix),
we obtain (\ref{WeakFormulation1}).
To show the `only-if' part, we choose the test function
\begin{align}\label{TestFun2}
\eta(x,t) = \int_{t}^{T}h_{m}(\sigma - t)\varphi(\sigma,x)d\sigma = \int_{0}^{T-t}h_{m}(\sigma)\varphi(\sigma+t,x)d\sigma,
\end{align}
with arbitrary $m\in\mathbb{N}$ and nonnegative $\varphi\in H_{e}^{1,\beta}(Q_{T})$ satisfying $\varphi|_{t=T} = 0$;
$\eta$ is nonnegative since $\varphi$ and $h_{m}$ are both nonnegative functions.
For the first term in (\ref{WeakFormulation1}), it can be transformed to
\begin{align}\label{EquivalentTran1}
\int_{0}^{T}\int_{\Omega}-\varphi_{t}\left[ g_{1-\alpha,m}*(u-u_{0}) \right] dxdt,
\end{align}
where we used $g_{1-\alpha,m} = g_{1-\alpha}*h_{m}$ and the Fubini's theorem.
For term $\int_{0}^{T}\mathcal{E}(u,\eta)dt$, we have
\begin{align*}
\begin{split}
&2 \int_{0}^{T}\mathcal{E}(u,\eta)dt \\
= & \int_{0}^{T}\int_{\mathbb{R}^{n}}\int_{\mathbb{R}^{n}}\int_{t}^{T}
h_{m}(\sigma-t)(u(x,t)-u(y,t))(\varphi(x,\sigma)-\varphi(y,\sigma))k(x,y)d\sigma dx dy dt \\
= & \int_{0}^{T}\int_{\mathbb{R}^{n}}\int_{\mathbb{R}^{n}}
((h_{m}*u)(x,t)-(h_{m}*u)(y,t))(\varphi(x,t)-\varphi(y,t))k(x,y)dxdydt \\
= & 2 \int_{0}^{T}\mathcal{E}(h_{m}*u,\varphi)dt.
\end{split}
\end{align*}
Observe that $g_{1-\alpha,m}*(u-u_{0}) \in {_{0}}W{^{1,2}}([0,T];L_{e}^{2}(\Omega))$. Therefore,
combining (\ref{EquivalentTran1}) and the above equation, then integrating by parts and using $\varphi|_{t = T} = 0$ yields
\begin{align}\label{WeakFormWithT}
\int_{0}^{T}\int_{\Omega}\varphi \partial_{t}\left[ g_{1-\alpha,m}*(u-u_{0}) \right]dx
+ \mathcal{E}(h_{m}*u,\varphi)dt \geq \int_{0}^{T}\int_{\Omega}(h_{m}*f)\varphi dxdt,
\end{align}
for all $m \in \mathbb{N}$ and $\varphi \in H_{e}^{1,\beta}(Q_{T})$ with $\varphi|_{t=T} = 0$.
By means of a simple approximation argument, we obtain that (\ref{WeakFormWithT}) holds true for any $\varphi$
of the form $\varphi(x,t) = \chi_{(t_{1},t_{2})}\psi(x)$ where $\chi_{(t_{1},t_{2})}$ denotes the characteristic
function of the time interval $(t_{1},t_{2})$, $0<t_{1}<t_{2}<T$ and $\psi \in H_{e}^{\beta}(\Omega)$ is nonnegative.
Appealing to the Lebesgue's differentiation theorem \cite{grafakos2014classical},
the proof is complete.
\end{proof}

\subsection{Scaling property}

Let $t_{0},r > 0$ and $x_{0} \in \mathbb{R}^{n}$. Suppose $u \in V_{p}([0,T];\Omega)$ is a weak solution (supersolution or subsolution)
of equation (\ref{TimeSpaceEquation1}) in $(0,t_{0}r^{2\beta/\alpha}) \times B(x_{0},r)$.
Changing the coordinates according to $s = t/r^{2\beta/\alpha}$ and $y = (x-x_{0})/r$ and setting
$\tilde{u}(s,y) := u(sr^{2\beta/\alpha},x_{0}+yr)$, $\tilde{u}_{0}(y) := u_{0}(x_{0}+yr)$,
$\tilde{a}(y_{1},y_{2}) := a(x_{0}+y_{1}r,x_{0}+y_{2}r)$,
$\tilde{k}_{0}(y_{1},y_{2}) := r^{n+2\beta}k_{0}(x_{0}+y_{1}r,x_{0}+y_{2}r)$ and
$\tilde{f}(s,y) := r^{2\beta}f(sr^{2\beta/\alpha},x_{0}+yr)$.

Through simple calculations, we find that $\tilde{k}_{0}(\cdot,\cdot)$ still satisfies inequality (\ref{Assumption1})
and inequality (\ref{Assumption2}). We also have
\begin{align*}
\partial_{t}^{\alpha}(u(t,x)-u_{0}(x)) = r^{-2\beta}\partial_{s}^{\alpha}(\tilde{u}(s,y)-\tilde{u}_{0}(y))
\end{align*}
and
\begin{align*}
Lu(t,x) & = \int_{\mathbb{R}^{n}}[\tilde{u}(s,y)-\tilde{u}(s,z)]\tilde{a}(y,z)r^{-n-2\beta}\tilde{k}_{0}(y,z)r^{n}dz \\
& = r^{-2\beta} L\tilde{u}(s,y).
\end{align*}
Thus the problem for $u(t,x)$ is transformed to a problem for $\tilde{u}(s,y)$ in $(0,t_{0})\times B(0,1)$, namely there holds
(in the weak sense)
\begin{align*}
\partial_{s}^{\alpha}(\tilde{u}-\tilde{u}_{0}) + L\tilde{u} = (\geq \text{ or }\leq)\tilde{f},
\quad s\in(0,t_{0}), \,\, y\in B(0,1).
\end{align*}

\subsection{Existence of weak solution}\label{WeakSolutionSection}

Weak solutions have been constructed for an abstract evolutionary integro-differential equation in Hilbert spaces in \cite{RicoZacher2009Weak},
which provides a general framework incorporating equation (\ref{TimeSpaceEquation1}).
Choosing $\beta_{0}\in[n/4,1)$ and $\beta\in(\beta_{0},1)$, notice that
\begin{align*}
H_{e}^{\beta}(\Omega) \hookrightarrow L_{e}^{2}(\Omega) \hookrightarrow H^{-\beta}(\Omega),
\end{align*}
where we used the equivalence relation (\ref{EquivalenceNorm}).

Because
\begin{align*}
\mathcal{E}(u(t,\cdot),v(t,\cdot)) \leq C \|u(t,\cdot)\|_{H_{e}^{\beta}(\Omega)}\|v(t,\cdot)\|_{H_{e}^{\beta}(\Omega)},
\end{align*}
and
\begin{align*}
\mathcal{E}(u(t,\cdot),u(t,\cdot)) \geq C(\Omega,\Lambda)\|u(t,\cdot)\|_{H_{e}^{\beta}(\Omega)},
\end{align*}
where we used the fractional Poincar\'{e} inequality (Proposition 3.6 in \cite{Xavier2014Boundary}),
we know that $\mathcal{E}(\cdot,\cdot)$ satisfies condition (Ha) in \cite{RicoZacher2009Weak}.
Hence, according to Theorem 3.1 and Theorem 3.2 proved in \cite{RicoZacher2009Weak}, we could obtain
the following theorem.
\begin{theorem}\label{WeakSolutionTheorem}
Let $T > 0$, $\alpha \in (0,1)$, $\beta_{0}\in[n/4,1)$, $\Lambda > \max\{1,\beta_{0}^{-1}\}$
and $k \in \mathcal{R}(\beta_{0},\Lambda)$.
Assume $u_{0} \in L_{e}^{2}(\Omega)$, $f \in L^{2}([0,T];L_{e}^{2}(\Omega))$.
Then problem (\ref{TimeSpaceEquation1}) admits exactly one solution in the space $V_{p}([0,T],\Omega)$ with $1\leq p < 2/(1-\alpha)$
and the following estimate hold
\begin{multline}\label{WeakSolutionEst}
\|u-u_{0}\|_{{_{0}}H{^{\alpha}}([0,T];H^{-\beta}(\Omega))} + \|u\|_{L^{2}([0,T];H_{e}^{\beta}(\Omega))}
+ \|g_{1-\alpha}*u\|_{C([0,T];L_{e}^{2}(\Omega))} \\
+ \|u\|_{L^{p}([0,T];L_{e}^{2}(\Omega))}
\leq C (\|u_{0}\|_{L_{e}^{2}(\Omega)} + \|f\|_{L^{2}([0,T];H^{-\beta}(\Omega))}),
\end{multline}
where $C = C(\alpha,\beta,T,n)$ is a general constant.
\end{theorem}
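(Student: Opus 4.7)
The plan is to deduce this theorem as a direct application of the abstract existence and uniqueness results of \cite{RicoZacher2009Weak} for evolutionary integro-differential equations in Hilbert spaces, using the Gelfand triple $H_e^\beta(\Omega) \hookrightarrow L_e^2(\Omega) \hookrightarrow H^{-\beta}(\Omega)$ indicated just before the theorem. The requirement $\beta_0 \geq n/4$ guarantees the compact Sobolev embedding needed for the framework, and by perturbing $\beta_0$ slightly if necessary one can keep $\beta$ away from the half-integers so that the identification (\ref{EquivalenceNorm}) is legitimate.

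The main step is to verify the standing hypothesis (Ha) of \cite{RicoZacher2009Weak} for the bilinear form $\mathcal{E}$ defined in (\ref{2WeakSolutionBilinear}), namely measurability in $t$, boundedness and coercivity on $H_e^\beta(\Omega)$. Boundedness follows from the upper bound in (\ref{Assumption2}) applied to functions supported in $\Omega$, with the far-field tails absorbed by the second term of (\ref{Assumption1}), yielding
\begin{align*}
\lvert \mathcal{E}(u,v) \rvert \leq C \|u\|_{H_e^\beta(\Omega)} \|v\|_{H_e^\beta(\Omega)}.
\end{align*}
Coercivity uses the lower bound in (\ref{Assumption2}) together with the fractional Poincar\'e inequality (Proposition 3.6 of \cite{Xavier2014Boundary}), producing
\begin{align*}
\mathcal{E}(u,u) \geq C(\Omega,\Lambda) \|u\|_{H_e^\beta(\Omega)}^2,
\end{align*}
exactly as the excerpt announces.

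Granted (Ha), Theorem 3.1 and Theorem 3.2 of \cite{RicoZacher2009Weak} produce a unique weak solution $u$ satisfying $u \in L^2([0,T]; H_e^\beta(\Omega))$, $g_{1-\alpha}*(u-u_0) \in C([0,T]; L_e^2(\Omega))$ vanishing at $t=0$, and $u-u_0 \in {_0}H^\alpha([0,T]; H^{-\beta}(\Omega))$, together with the energy estimate controlling these three norms in terms of $\|u_0\|_{L_e^2(\Omega)}+\|f\|_{L^2([0,T];H^{-\beta}(\Omega))}$. To promote the time-integrability to $L^p([0,T]; L_e^2(\Omega))$ for any $p < 2/(1-\alpha)$, I would interpolate between $u \in L^2([0,T]; H_e^\beta(\Omega))$ and $u-u_0 \in H^\alpha([0,T]; H^{-\beta}(\Omega))$ with parameter $1/2$ to obtain $u \in H^{\alpha/2}([0,T]; L_e^2(\Omega))$ (noting $u_0$ is trivially in $L^\infty$ in time), and then invoke the vector-valued one-dimensional Sobolev embedding $H^{\alpha/2}([0,T]; L_e^2(\Omega)) \hookrightarrow L^p([0,T]; L_e^2(\Omega))$, which is precisely sharp at $p = 2/(1-\alpha)$.

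The main obstacle, while not deep, is the passage from the local hypotheses (\ref{Assumption1})--(\ref{Assumption2}) (phrased on balls $B_\rho(x_0) \subset \Omega$) to the global boundedness and coercivity statements on $H_e^\beta(\Omega)$ required by (Ha). This demands splitting $\mathcal{E}(u,v)$ into a near-diagonal part controlled by (\ref{Assumption2}) and a far-field part controlled by the second term of (\ref{Assumption1}), and using the exterior vanishing $u=0$ on $\mathbb{R}^n\setminus\Omega$ to close the estimates while carefully tracking the dependence on $\Lambda$ in the constants.
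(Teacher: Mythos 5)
Your proposal follows essentially the same route as the paper: both set up the Gelfand triple $H_e^\beta(\Omega) \hookrightarrow L_e^2(\Omega) \hookrightarrow H^{-\beta}(\Omega)$, verify the boundedness and coercivity condition (Ha) for $\mathcal{E}$ (the latter via the fractional Poincar\'e inequality of \cite{Xavier2014Boundary}), and then invoke Theorems 3.1 and 3.2 of \cite{RicoZacher2009Weak}. Your added interpolation/Sobolev-embedding argument for the $L^p([0,T];L_e^2(\Omega))$ bound is a reasonable filling-in of a detail the paper leaves implicit, not a different method.
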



\section{A weak Harnack's inequality}\label{WeakHarnackSection}

In this section, for concisely and clarity, we only prove a weak Harnack's inequality for equation (\ref{TimeSpaceEquation1}) with $f = 0$
which is enough for our purpose.
To formulate our result, let $\mu_{n}$ denotes the Lebesgue measure in $\mathbb{R}^{n}$ and
$\mu_{n+1}$ denotes the Lebesgue measure in $\mathbb{R}\times\mathbb{R}^{n}$.
For $\delta \in (0,1)$, $t_{0} \geq 0$, $\tau > 0$, and a ball $B(x_{0},r)$, define the boxes
\begin{align*}
Q_{-}(t_{0},x_{0},r) & = (t_{0},t_{0}+\delta\tau r^{2\beta/\alpha})\times B(x_{0},\delta r), \\
Q_{+}(t_{0},x_{0},r) & = (t_{0}+(2-\delta)\tau r^{2\beta/\alpha},t_{0}+2\tau r^{2\beta/\alpha})\times B(x_{0},\delta r).
\end{align*}
\begin{theorem}\label{HarnackInequalityTheorem}
Let $k\in \mathcal{R}(\beta_{0},\Lambda)$ for some $\beta_{0}\in(n/4,1)$
and $\Lambda \geq \max\{1,\beta_{0}^{-1}\}$. Let $\alpha \in (0,1)$, $T > 0$, $\Omega\subset\mathbb{R}^{n}$ be a bounded domain and
$u_{0}\in L_{e}^{2}(\Omega)$.
Let further $\delta \in (0,1)$, $\eta > 1$, and $\tau > 0$ be fixed. Then for any $t_{0} \geq 0$
and $r > 0$ with $t_{0}+2\tau r^{2\beta/\alpha} \leq T$, and ball $B(x_{0},\eta r)\subset \Omega$ and
any nonnegative weak supersolution $u$ of (\ref{TimeSpaceEquation1}) in $(0,t_{0}+2\tau r^{2\beta/\alpha}) \times B(x_{0},\eta r)$
with $u_{0} \geq 0$ in $B(x_{0},\eta r)$ and $f = 0$, there holds
\begin{align*}
\frac{1}{\mu_{n+1}(Q_{-}(t_{0},x_{0},r))}\int_{Q_{-}(t_{0},x_{0},r)}ud\mu_{n+1} \leq C \essinf_{Q_{+}(t_{0},x_{0},r)}u,
\end{align*}
where the constant $C = C(\Lambda,\delta,\tau,\eta,\alpha,\beta,n)$.
\end{theorem}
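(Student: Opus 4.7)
The plan is to adapt Moser's iteration scheme to the space-time fractional setting, following the philosophy of Zacher's weak Harnack results for time-fractional equations combined with the De~Giorgi--Nash--Moser machinery developed for nonlocal elliptic operators with kernels in $\mathcal R(\beta_0,\Lambda)$. I would organize the argument into four steps, which I believe match the four-step structure announced after Section~3.

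First, I would use the scaling property established just before the theorem to reduce to the canonical configuration $x_0=0$, $r=1$, $t_0=0$. This normalization trivializes all the geometry and leaves only one parameter family of cylinders $Q_\pm$ to work with, so the constants in the final estimate depend only on the indicated quantities. Next, the core analytic input is a Caccioppoli-type energy inequality: testing the equivalent weak formulation of Lemma~\ref{WeakEquivalent1} against $\psi = \varphi^2 (u+\varepsilon)^{-q}$ with a space cutoff $\varphi\in C_c^\infty(B(x_0,\eta r))$ and using the fundamental convexity inequality $\partial_t[g_{1-\alpha,m}*\Phi(u)]\le \Phi'(u)\,\partial_t[g_{1-\alpha,m}*u]$ for convex $\Phi$, I would derive, after passing $m\to\infty$ and $\varepsilon\to 0^+$, a weighted energy estimate controlling the $H_e^\beta$-seminorm of $(u+\varepsilon)^{(1-q)/2}$ inside the cutoff in terms of data on a slightly larger cylinder. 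Assumption~(\ref{Assumption2}) is what converts the bilinear form $\mathcal E$ back into the fractional Gagliardo seminorm used in Sobolev embedding.

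In the third step I would run a Moser iteration on negative powers. Combining the Caccioppoli inequality above with the fractional Sobolev embedding $H_e^\beta\hookrightarrow L^{2n/(n-2\beta)}$ (valid for $\beta_0>n/4$ in dimensions $n=2,3$) and iterating on a geometric sequence of shrinking radii and exponents yields an estimate of the form
\begin{equation*}
\essinf_{Q_+(t_0,x_0,r)} u \;\ge\; c\,\Bigl(\frac{1}{\mu_{n+1}(\widetilde Q)}\int_{\widetilde Q} u^{-p_0}\,d\mu_{n+1}\Bigr)^{-1/p_0}
\end{equation*}
for some small $p_0>0$ and an intermediate cylinder $\widetilde Q$ sandwiched between $Q_-$ and $Q_+$. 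In parallel, a logarithmic estimate is needed: testing against $\psi = \varphi^2 (u+\varepsilon)^{-1}$ and using the same convexity device on $\Phi(u) = -\log(u+\varepsilon)$, the time-fractional contribution becomes tractable and yields a BMO-type bound for $\log u$ over $\widetilde Q$, with a space-time cross control provided by the tail terms in (\ref{Assumption1}).

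The last step is to glue these pieces together via a Bombieri--Giusti type covering lemma: the BMO bound on $\log u$ upgrades the negative $L^{p_0}$ estimate to a bound by the $L^{p_0}$ average of $u$ (with positive exponent) over $Q_-$, which by Jensen's inequality dominates the $L^1$ average appearing in the theorem. I expect the main obstacle to be the rigorous handling of the time-fractional term in the Caccioppoli and logarithmic test-function computations: the inequality $\Phi'(u)\partial_t(g_{1-\alpha,m}*u)\ge \partial_t(g_{1-\alpha,m}*\Phi(u)) - \text{(remainder)}$ requires nonnegativity and monotonicity of $g_{1-\alpha,m}$ (listed in Section~2.2), but the remainder term carries memory from earlier times and must be absorbed using the nonincreasing property of $g_{1-\alpha,m}$ and the initial condition $u_0\ge 0$; tracking this remainder uniformly in $m$ before taking $m\to\infty$ is the delicate part, and it is also what forces the shape of the forward-in-time cylinders $Q_\pm$ (waiting time $(2-\delta)\tau r^{2\beta/\alpha}$) that appears in the statement.
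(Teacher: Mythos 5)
Your overall strategy (scaling normalization, a Caccioppoli estimate for negative powers obtained from the Yosida-regularized weak formulation and the convexity identity for $\partial_t(g_{1-\alpha,m}*\cdot)$, a Moser iteration giving a lower bound for $\essinf_{Q_+}u$ in terms of a small negative moment, a measure/log estimate for $\log u$, and a final gluing through an abstract lemma of Bombieri--Giusti type) is indeed the route the paper takes (Theorems \ref{infuTheorem}, \ref{estimateLogU} and Lemma \ref{logAppendix}), and your remarks about tracking the memory remainder uniformly in $m$ and about the mixed-norm embedding are consistent with how the paper proceeds. However, there is a genuine gap in your last step. You claim that the crossover via the log estimate produces a bound by the $L^{p_0}$ average of $u$ over $Q_-$ ``which by Jensen's inequality dominates the $L^1$ average.'' For $0<p_0<1$ Jensen (or H\"older) gives the opposite inequality,
\begin{align*}
\left(\frac{1}{\mu_{n+1}(Q_-)}\int_{Q_-}u^{p_0}\,d\mu_{n+1}\right)^{1/p_0}
\;\leq\;\frac{1}{\mu_{n+1}(Q_-)}\int_{Q_-}u\,d\mu_{n+1},
\end{align*}
so your argument as written only yields $\essinf_{Q_+}u\gtrsim\bigl(\text{avg}_{Q_-}u^{p_0}\bigr)^{1/p_0}$, which is strictly weaker than the stated weak Harnack inequality with the $L^1$ average on the left.

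What is missing is a separate, equation-based reverse estimate for small positive moments of the supersolution on the early cylinder: an inequality of the form $\|u\|_{L^{1}(U'_{\sigma'})}\leq C(\sigma-\sigma')^{-\tau_0(1/\gamma-1)}\|u\|_{L^{\gamma}(U'_{\sigma})}$ for $0<\gamma\leq\kappa^{-1}$. This is the content of Theorem \ref{positivemomentEst} in the paper, and it cannot be obtained from Jensen or from the log estimate alone (a function can have controlled small moments and log-distribution but arbitrarily large $L^1$ norm); it requires its own forward-in-time Moser-type iteration, testing with $\psi^{2}\tilde u^{-q}$ for $q=1-\gamma\in(0,1)$, using the elementary inequality of Lemma \ref{FundamentalIdentity4}(2) (rather than part (1), which serves the negative-power iteration), a time cutoff $\phi$ that is $1$ near $t_0$ and vanishes later, and the iteration Lemma \ref{MoserSecond}. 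Only with this ingredient in hand can the abstract Lemma \ref{logAppendix} be applied a second time, to $f_{2}=ue^{-c(u)}$ with $\xi_0=1$ and $\eta=\kappa^{-1}$, to convert the log estimate into the bound $\mu_{n+1}(U_1')^{-1}\|u\|_{L^1(Q_-)}\leq M e^{c(u)}$, which combined with $e^{c(u)}\leq M\essinf_{Q_+}u$ gives the theorem. So the proposal needs this additional positive-moment iteration; the rest matches the paper's proof.
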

\begin{remark}\label{FullHarnackInequality}
The above theorem provides a weak Harnack's inequality in the case $f = 0$, however, when $f$ is not a zero function similar result also holds.
In order to state the main idea concisely,
we only show the proof of Theorem \ref{HarnackInequalityTheorem} in the following.
However, just change $\tilde{u}$ to $\tilde{u} + \|f\|_{L^{\infty}(Q_{T})}$, and notice that $\|f/\tilde{u}\|_{L^{\infty}(Q_{T})}\leq 1$
in the following proof, we can adjust the proof appropriately as in \cite{FelKass2013} to obtain the following estimate
\begin{align*}
\frac{1}{\mu_{n+1}(Q_{-}(t_{0},x_{0},r))}\int_{Q_{-}(t_{0},x_{0},r)}ud\mu_{n+1}
\leq C \left(\essinf_{Q_{+}(t_{0},x_{0},r)}u+\|f\|_{L^{\infty}(Q_{T})} \right),
\end{align*}
under the same conditions as Theorem \ref{HarnackInequalityTheorem}.
\end{remark}

Before proving this theorem, let us provide an important inequality.
For $\kappa = 1 + \frac{2\beta}{3}$, $1 < p < \min\{ 1/(1-\alpha), 3/(2\beta) \}$
and a function $u \in V_{p}([t_{1},t_{2}]\times\Omega)$, we have
\begin{align}\label{EmbeddingKappa}
\|u\|_{L^{2\kappa}([t_{1},t_{2}]\times\Omega)} \leq C(t_{1},t_{2},\Omega,p,\beta,n) \|u\|_{V_{p}([t_{1},t_{2}]\times\Omega)}.
\end{align}
\begin{proof}
Let $\theta = \frac{3}{3-2\beta}$, $\theta'=\frac{3}{2\beta}$, then we have
\begin{align*}
\int_{t_{1}}^{t_{2}}\int_{\Omega}u^{2\kappa}dxdt = &
\int_{t_{1}}^{t_{2}}\int_{\Omega}u^{2}u^{2\frac{2\beta}{3}}dxdt \\
\leq & \int_{t_{1}}^{t_{2}}\left( \int_{\Omega}u^{2\theta}dx \right)^{1/\theta}\left( \int_{\Omega}u^{2}dx \right)^{1/\theta'}dt \\
\leq & C(t_{1},t_{2},\Omega,p,\beta,n) \left( \int_{t_{1}}^{t_{2}} \left( \int_{\Omega} u^{2} dx \right)^{p} dt \right)^{\frac{1}{2p}\frac{4\beta}{3}} \\
& \times \Bigg[ \int_{t_{0}}^{t_{1}}\int_{\mathbb{R}^{n}}\int_{\mathbb{R}^{n}}
\frac{(u(s,x)-u(s,y))^{2}}{|x-y|^{n+2\beta}}dxdyds  \\
& + \int_{t_{0}}^{t_{1}}\int_{\mathbb{R}^{n}}u^{2}dxds \Bigg],
\end{align*}
where we used Lemma \ref{SobolevInequalityFrac} to deduce the third inequality.
Now, recall the definition of $V_{p}([t_{1},t_{2}]\times\Omega)$, the above inequality provides us
the desired result.
\end{proof}
\begin{remark}\label{embeddingVp}
From the above proof, notice the relation (\ref{EquivalenceNorm}) and $\beta_{0}\in (n/4,1)$, we could obtain
\begin{align}\label{EmbeddingKappa2}
\|u\|_{L^{2\kappa}([t_{1},t_{2}]\times\Omega)} \leq C(t_{1},t_{2},\Omega,p,\beta,n)
\|u\|_{L^{2p}([t_{1},t_{2}]\times\Omega)\cap L^{2}([t_{1},t_{2}],H_{0}^{\beta}(\Omega))}.
\end{align}
\end{remark}

Because the proof involves a lot of complex calculations, we divide the proof into
four parts for clarity.

\subsection{An estimate for $\mathbf{inf\,u}$}
For $\sigma > 0$ we put $\sigma B(x,r) := B(x,\sigma r)$. Recall that $\mu_{n}$ denotes the Lebesgue measure in $\mathbb{R}^{n}$.
\begin{theorem}\label{infuTheorem}
Let $\Omega\subset\mathbb{R}^{n}$, $\alpha \in (0,1)$, $T > 0$, $k \in \mathcal{R}(\beta_{0},\Lambda)$ with $\beta_{0}\in(n/4,1)$ and $\Lambda \geq \max\{1,\beta_{0}^{-1}\}$.
Let further $\eta > 0$ and $\delta \in (0,1)$ be fixed. Then for any $t_{0}\in(0,T]$ and $r>0$ with
$t_{0}-\eta r^{2\beta/\alpha} \geq 0$, and ball $B = B(x_{0},r)\subset \Omega$, and any weak supersolution $u\geq\epsilon > 0$
of equation (\ref{TimeSpaceEquation1}) in $(0,t_{0})\times B$ with $u_{0}\geq 0$ in $B$ and $f = 0$ , there holds
\begin{align*}
\esssup_{U_{\sigma'}}u^{-1} \leq \left( \frac{C\mu_{n+1}(U_{1})^{-1}}{(\sigma-\sigma')^{\tau_{0}}} \right)^{1/\gamma}
\|u^{-1}\|_{L^{\gamma}(U_{\sigma})}, \quad \delta\leq\sigma'<\sigma\leq 1,\,\,\gamma\in(0,1].
\end{align*}
Here $U_{\sigma} = (t_{0}-\sigma\eta r^{2\beta/\alpha},t_{0})\times\sigma B$, $0<\sigma\leq 1$,
$C = C(\Lambda,\delta,\eta,\alpha,\beta_{0},n)$ and $\tau_{0} = \tau_{0}(\beta,n)$.
\end{theorem}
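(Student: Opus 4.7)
The plan is to follow a Moser iteration scheme adapted to the space-time fractional setting. Since $u\geq\epsilon>0$ is a weak supersolution of (\ref{TimeSpaceEquation1}) with $f=0$, the reciprocal $v=u^{-1}$ should behave like a subsolution of a related inequality, and the claim is a standard $L^{\gamma}\to L^{\infty}$ gain obtained by iterating local energy estimates over a sequence of shrinking cylinders. Throughout, I would work with the Yosida-regularized weak formulation (\ref{WeakFormulation2}) for $h_{m}\ast u$, and only at the end pass $m\to\infty$.

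First, I would fix a smooth space-time cutoff $\zeta$ supported in $U_{\sigma}$ with $\zeta\equiv 1$ on $U_{\sigma'}$ and $|\partial_{t}\zeta|+|\nabla\zeta|\lesssim(\sigma-\sigma')^{-1}r^{-\text{(suitable power)}}$. For a parameter $q>1$, I would plug the test function $\psi=\zeta^{2}\bigl(h_{m}\ast u\bigr)^{-q}$ into (\ref{WeakFormulation2}) (after a standard truncation argument to ensure $\psi\in H_{e}^{\beta}(\Omega)\cap L^{\infty}$, which is legal because $u\geq\epsilon$). The time term is handled by the fundamental convexity inequality for the Yosida approximation, namely that for a convex $C^{1}$ function $\phi$ one has
\begin{equation*}
\phi'(u)\,\partial_{t}\bigl[g_{1-\alpha,m}\ast(u-u_{0})\bigr]\geq\partial_{t}\bigl[g_{1-\alpha,m}\ast(\phi(u)-\phi(u_{0}))\bigr]
\end{equation*}
(applied with $\phi(s)=s^{1-q}/(1-q)$), which is the analogue of the Alikakos/Zacher chain rule for fractional derivatives and is the point where the Yosida smoothing is indispensable.

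Second, the nonlocal spatial term $\mathcal{E}(h_{m}\ast u,\zeta^{2}(h_{m}\ast u)^{-q})$ would be bounded below, via the standard algebraic inequality
\begin{equation*}
(a-b)\bigl(a^{-q}\zeta_{1}^{2}-b^{-q}\zeta_{2}^{2}\bigr)\geq c_{q}\bigl(a^{(1-q)/2}\zeta_{1}-b^{(1-q)/2}\zeta_{2}\bigr)^{2}-C_{q}(a^{1-q}+b^{1-q})(\zeta_{1}-\zeta_{2})^{2},
\end{equation*}
by a Dirichlet-type form of $w:=\zeta(h_{m}\ast u)^{(1-q)/2}$ against $k_{0}$, minus error terms absorbed using (\ref{Assumption1}) to control the kernel tails and the interaction with $\zeta$. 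Passing $m\to\infty$ and combining with the time term yields an energy estimate of the schematic form
\begin{equation*}
\sup_{t\in I_{\sigma'}}\int_{\sigma'B}u^{1-q}\,dx+\int_{I_{\sigma'}}\|u^{(1-q)/2}\|_{H_{e}^{\beta}}^{2}\,dt\leq C\frac{q}{(\sigma-\sigma')^{\tau_{0}}}\int_{U_{\sigma}}u^{1-q}\,dx\,dt,
\end{equation*}
uniformly for $q$ bounded away from $1$.

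Third, I would combine this with the embedding (\ref{EmbeddingKappa})--(\ref{EmbeddingKappa2}) applied to $w=u^{(1-q)/2}$, which upgrades the $V_{p}$-type bound into an $L^{2\kappa}$-estimate with $\kappa=1+2\beta/3>1$. In multiplicative form this reads
\begin{equation*}
\|u^{-1}\|_{L^{\kappa(q-1)}(U_{\sigma'})}^{q-1}\leq\Bigl(\frac{Cq}{(\sigma-\sigma')^{\tau_{0}}}\Bigr)\|u^{-1}\|_{L^{q-1}(U_{\sigma})}^{q-1}.
\end{equation*}
Finally, Moser-iterating with the choice $q_{k}-1=\gamma\kappa^{k}$ and a geometric sequence of radii $\sigma_{k}\downarrow\sigma'$ converts the chain of $L^{\kappa(q-1)}$--$L^{q-1}$ bounds into the claimed $L^{\infty}$--$L^{\gamma}$ estimate, with $\tau_{0}=\tau_{0}(\beta,n)$ coming from summing the geometric-type losses $(\sigma_{k}-\sigma_{k+1})^{-\tau_{0}}$ and $C=C(\Lambda,\delta,\eta,\alpha,\beta_{0},n)$ from tracking the constants.

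The main obstacle is the treatment of the time term: the generalized Caputo derivative does not obey a classical chain rule, so everything must be done at the Yosida level, where the convexity inequality above is available, and then passed to the limit $m\to\infty$ using Lemma~\ref{ConvergenceAppendix} and Fatou. A secondary technical difficulty is controlling the nonlocal tails of $\mathcal{E}$ against a cutoff $\zeta$ that is not constant on all of $\mathbb{R}^{n}$; here one uses (\ref{Assumption1}) to bound the long-range interactions by $\Lambda\rho^{-2\beta}$ so that those contributions are absorbable into the $(\sigma-\sigma')^{-\tau_{0}}$ loss. The truncation ensuring $(h_{m}\ast u)^{-q}\in L^{\infty}$ (required by the definition of admissible test functions) is routine but must be done carefully uniformly in $m$.
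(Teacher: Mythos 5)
Your overall strategy coincides with the paper's: Moser iteration on negative powers of $u$, with the time term handled at the Yosida level via the fundamental identity, the nonlocal term handled by the algebraic inequality of Lemma~\ref{FundamentalIdentity4}, the gain of integrability coming from the $V_{p}\hookrightarrow L^{2\kappa}$ embedding (\ref{EmbeddingKappa}), and an abstract iteration lemma (Lemma~\ref{MoserFirst}) at the end. However, two of your steps would fail as written. First, the chain-rule inequality: you state it for convex $\phi$ but apply it with $\phi(s)=s^{1-q}/(1-q)$, which for $q>1$ is \emph{concave}, and with that choice the displayed inequality points the wrong way. The correct choice is the convex function $H(y)=-(1-q)^{-1}y^{1-q}=y^{1-q}/(q-1)$, which via (\ref{FundamentalIdentity}) gives the bound actually needed (the paper's (\ref{infuFormula3})),
\begin{equation*}
-\tilde u^{-q}\,\partial_{s}\bigl(g_{1-\alpha,m}*\tilde u\bigr)\;\geq\;-\frac{1}{1-q}\,\partial_{s}\bigl(g_{1-\alpha,m}*\tilde u^{1-q}\bigr)+\frac{q}{1-q}\,\tilde u^{1-q}g_{1-\alpha,m},
\end{equation*}
i.e.\ an \emph{upper} bound on $\tilde u^{-q}\partial_{s}(g_{1-\alpha,m}*\tilde u)$, which is what combines with the supersolution inequality. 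Related to this, the test function must be a power of $u$ itself (not of $h_{m}*u$, since the fundamental identity couples the test function to the same $u$ sitting under $\partial_{t}(g_{1-\alpha,m}*\cdot)$), and the spatial cutoff enters with the power $1+q$, i.e.\ $\varphi=\psi^{1+q}\tilde u^{-q}$, as dictated by Lemma~\ref{FundamentalIdentity4}(1).

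Second, and more substantively, your schematic energy estimate with $\sup_{t\in I_{\sigma'}}\int_{\sigma' B}u^{1-q}\,dx$ on the left is not obtainable here: for the (generalized Caputo) fractional derivative there is no analogue of the classical "take the supremum in time" step, and this is precisely where the fractional-in-time adaptation of Moser's scheme is nontrivial. The paper instead uses a \emph{separate} temporal cutoff $\phi$, commutes it past the memory term via Lemma~\ref{FundamentalIdentity2}, convolves the resulting inequality with $g_{\alpha}$ (using $g_{\alpha}*\partial_{s}(g_{1-\alpha,m}*v)=h_{m}*v$), and then applies Young's convolution inequality with $g_{\alpha}\in L^{p}$ for $p<\min\{1/(1-\alpha),3/(2\beta)\}$; this yields an $L^{2p}$-in-time (not $L^{\infty}$-in-time) bound on $\int\psi^{1+q}w^{2}dx$, which together with the $L^{2}H^{\beta}$ bound is exactly the $V_{p}$-type quantity that (\ref{EmbeddingKappa})--(\ref{EmbeddingKappa2}) converts into the $L^{2\kappa}$ gain with $\kappa=1+2\beta/3$. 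Without this $g_{\alpha}$-convolution step your iteration inequality cannot be closed, since the claimed sup-in-time estimate is in general false for $\partial_{t}^{\alpha}$. With these two repairs (the convex $H$ and the $g_{\alpha}$-convolution argument replacing the sup bound), your sketch becomes the paper's proof.
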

\begin{proof}
In general, we could change coordinates as $t \rightarrow t/r^{\frac{2\beta}{\alpha}}$ and $x \rightarrow (x-x_{0})/r$,
thereby transforming the equation to a problem of the same type on $(0,t_{0}/r^{\frac{2\beta}{\alpha}})\times B(0,1)$.
Hence, without loss of generality, we could assume that $r = 1$ and $x_{0} = 0$.

Choose $\sigma'$ and $\sigma$ such that $\delta \leq \sigma' < \sigma \leq 1$ and denote $B_{1} = \sigma B$.
For $\rho \in (0,1]$, we denote $V_{\rho} = U_{\rho\sigma}$.
Given $0 < \rho' < \rho \leq 1$, let $t_{1} = t_{0} - \rho\sigma\eta$ and $t_{2} = t_{0} - \rho' \sigma\eta$.
Obviously, we have $0\leq t_{1} < t_{2} < t_{0}$.
Now we introduce the shifted time $s = t - t_{1}$ and set $\tilde{f}(s) := f(s+t_{1})$, $s \in (0,t_{0}-t_{1})$,
for functions $f$ defined on $(t_{1},t_{0})$. Because $u$ is a positive
weak supersolution of (\ref{TimeSpaceEquation1}) in $(0,t_{0})\times B$, we have
\begin{align*}
\int_{\Omega}\varphi\partial_{s}\left( g_{1-\alpha,m}*(\tilde{u} - \tilde{u}_{0}) \right)dx + \mathcal{E}(h_{m}*\tilde{u},\varphi) \geq 0, \quad
\text{a.e. }s\in(0,t_{0}-t_{1}), m\in\mathbb{N},
\end{align*}
for any nonnegative function $\varphi \in H_{e}^{\beta}(B)$. Because $u_{0} \geq 0$ in $B$, we then deduce that
\begin{align}\label{infuFormula1}
\begin{split}
\int_{B}\varphi\partial_{s}\left( g_{1-\alpha,m}*\tilde{u} \right)dx + \mathcal{E}(h_{m}*\tilde{u},\varphi) \geq 0, \quad
\text{a.e. }s\in(0,t_{0}-t_{1}), m\in\mathbb{N},
\end{split}
\end{align}
for any nonnegative function $\varphi \in H_{e}^{\beta}(B)$.
For $s \in (0,t_{0} - t_{1})$, we choose the test function $\varphi(s,x) := \psi^{1+q}(x)\tilde{u}^{-q}(s,x)$
with $q > 1$ and $\psi \in C_{0}^{1}(B_{1})$ so that
\begin{align}\label{infuFormula2}
\begin{split}
& 0 \leq \psi \leq 1, \quad \psi = 1 \text{ in }\rho'B_{1}, \quad \text{supp}\psi \subset \rho B_{1},   \\
& \qquad\qquad
|D\psi| \leq 2/(\sigma(\rho - \rho')).
\end{split}
\end{align}
Choose $H(y) := -(1-q)^{-1}y^{1-q}$, $y > 0$ in the fundamental identity (\ref{FundamentalIdentity}) shown in Appendix, there holds
for a.e. $(s,x) \in (0,t_{0} - t_{1}) \times B$
\begin{align}\label{infuFormula3}
\begin{split}
-\tilde{u}^{-q}\partial_{s}(g_{1-\alpha,m}*\tilde{u}) & \geq -\frac{1}{1-q}\partial_{s}(g_{1-\alpha,m}*\tilde{u}^{1-q})
+ \left( \frac{\tilde{u}^{1-q}}{1-q} - \tilde{u}^{1-q} \right)g_{1-\alpha,m}    \\
& \geq - \frac{1}{1-q}\partial_{s}(g_{1-\alpha,m}*\tilde{u}^{1-q}) + \frac{q}{1-q}\tilde{u}^{1-q}g_{1-\alpha,m}.
\end{split}
\end{align}
Considering (\ref{infuFormula3}), inequality (\ref{infuFormula1}) could be transformed into the following inequality
\begin{align}\label{infuFormula4}
\begin{split}
- \frac{1}{1-q} \int_{B_{1}}\psi^{1+q}\partial_{s}(g_{1-\alpha,m}*\tilde{u}^{1-q})dx & - \mathcal{E}(h_{m}*\tilde{u},\psi^{1+q}\tilde{u}^{-q}) \\
& \leq \frac{-q}{1-q} \int_{B_{1}}\psi^{1+q}\tilde{u}^{1-q}g_{1-\alpha,m}dx.
\end{split}
\end{align}
Now, we choose $\phi \in C^{1}([0,t_{0} - t_{1}])$ such that
\begin{align}\label{infuFormula5}
\begin{split}
& 0 \leq \phi \leq 1, \quad \phi = 0 \text{ in }[0,(t_{2} - t_{1})/2], \quad \phi = 1 \text{ in }[t_{2}-t_{1},t_{0}-t_{1}], \\
& \qquad\qquad\qquad\qquad
0 \leq \dot{\phi} \leq 4/(t_{2} - t_{1}).
\end{split}
\end{align}
Multiplying (\ref{infuFormula4}) by $q-1 > 0$ and by $\phi$, and convolving the resulting inequality with $g_{\alpha}$ yields
\begin{align}\label{infuFormula6}
\begin{split}
\int_{B_{1}}g_{\alpha}*\left( \phi\psi^{1+q}\partial_{s}(g_{1-\alpha,m}*\tilde{u}^{1-q}) \right)dx
& + (1-q)g_{\alpha}*\left[ \mathcal{E}(h_{m}*\tilde{u},\psi^{1+q}\tilde{u}^{-q}) \phi \right] \\
& \leq q g_{\alpha}*\int_{B_{1}}\psi^{1+q}\tilde{u}^{1-q}g_{1-\alpha,m}\phi dx,
\end{split}
\end{align}
for a.e. $s \in (0,t_{0} - t_{1})$. By Lemma \ref{FundamentalIdentity2} presented in Appendix, we have
\begin{align}\label{infuFormula7}
\begin{split}
& \int_{B_{1}}g_{\alpha}*(\phi\partial_{s}(g_{1-\alpha,m}*[\psi^{1+q}\tilde{u}^{1-q}]))dx \geq
\int_{B_{1}}\phi g_{\alpha}*(\partial_{s}(g_{1-\alpha,m}*[\psi^{1+q}\tilde{u}^{1-q}]))dx \\
& \quad\quad\quad\quad\quad
- \int_{0}^{s}g_{\alpha}(s-\sigma)\dot{\phi}(\sigma)\left( g_{1-\alpha,m}*\int_{B_{1}}\psi^{1+q}\tilde{u}^{1-q}dx \right)(\sigma)d\sigma.
\end{split}
\end{align}
Because $g_{1-\alpha,m}*[\psi^{1+q}\tilde{u}^{1-q}] \in {_{0}}W{^{1,1}}([0,t_{0}-t_{1}],L^{1}_{e}(B_{1}))$ and
$g_{1-\alpha,m} = g_{1-\alpha}*h_{m}$ as well as $g_{\alpha}*g_{1-\alpha} = 1$ we have
\begin{align}\label{infuFormula8}
g_{\alpha}*\partial_{s}(g_{1-\alpha,m}*[\psi^{1+q}\tilde{u}^{1-q}]) = h_{m}*(\psi^{1+q}\tilde{u}^{1-q}).
\end{align}
Combining (\ref{infuFormula6}), (\ref{infuFormula7}), and (\ref{infuFormula8}),
sending $m \rightarrow \infty$, and selecting an appropriate subsequence, if necessary, we obtain
\begin{align}\label{infuFormula9}
\begin{split}
& \int_{B_{1}}\phi\psi^{1+q}\tilde{u}^{1-q}dx + (q-1)g_{\alpha}*\left( \mathcal{E}(\tilde{u}, -\psi^{1+q}\tilde{u}^{-q}) \phi \right) \\
\leq & qg_{\alpha}*\int_{B_{1}}\psi^{1+q}\tilde{u}^{1-q}g_{1-\alpha}\phi dx \\
& \quad
+ \int_{0}^{s}g_{\alpha}(s-\sigma)\dot{\phi}(\sigma)\left( g_{1-\alpha}*\int_{B_{1}}\psi^{1+q}\tilde{u}^{1-q}dx \right)(\sigma)d\sigma,
\end{split}
\end{align}
for a.e. $s \in (0,t_{0}-t_{1})$. Now, we need a careful analysis of $\mathcal{E}(\tilde{u},-\psi^{1+q}\tilde{u}^{-q})$.
Denote $\vartheta(q) = \max\{4, (6q-5)/2\}$.
Using statement (1) in Lemma \ref{FundamentalIdentity4} given in Appendix, we could deduce that
\begin{align}\label{infuFormula10}
\begin{split}
& \mathcal{E}(\tilde{u},-\psi^{1+q}\tilde{u}^{-q}) \\
= & \int_{\mathbb{R}^{n}}\int_{\mathbb{R}^{n}}
(\tilde{u}(s,x)-\tilde{u}(s,y))(\psi^{1+q}(y)\tilde{u}^{-q}(s,y)-\psi^{1+q}(x)\tilde{u}^{-q}(s,x))\frac{k(x,y)}{2}dxdy \\
\geq & \frac{1}{2(q-1)}\text{I} - \frac{\vartheta(q)}{2}\text{II},
\end{split}
\end{align}
where
\begin{align*}
\text{I} = \int_{\mathbb{R}^{n}}\int_{\mathbb{R}^{n}}\psi(x)\psi(y)
\left( \left( \frac{\tilde{u}(s,x)}{\psi(x)} \right)^{\frac{1-q}{2}} - \left( \frac{\tilde{u}(s,y)}{\psi(y)} \right)^{\frac{1-q}{2}} \right)^{2}
k(x,y)dxdy,
\end{align*}
and
\begin{align*}
\text{II} = \int_{\mathbb{R}^{n}}\int_{\mathbb{R}^{n}}(\psi(x) - \psi(y))^{2}
\left( \left( \frac{\tilde{u}(s,x)}{\psi(x)} \right)^{1-q} - \left( \frac{\tilde{u}(s,y)}{\psi(y)} \right)^{1-q} \right)
k(x,y)dxdy.
\end{align*}
Considering (\ref{infuFormula10}), denote $w = \tilde{u}^{\frac{1-q}{2}}$, (\ref{infuFormula9}) could be reduced to
\begin{align}\label{infuFormula11}
\begin{split}
& \int_{B_{1}}\phi \psi^{1+q}w^{2}dx +\frac{1}{2}g_{\alpha}*(\text{I}\,\phi)
\leq q g_{\alpha}*\int_{B_{1}}\psi^{1+q}w^{2}g_{1-\alpha}\phi dx \\
& + \frac{\vartheta(q)(q-1)}{2}g_{\alpha}*(\text{II}\,\phi)
+ \int_{0}^{s}g_{\alpha}(s-\sigma)\dot{\phi}(\sigma)\left( g_{1-\alpha}*\int_{B_{1}}\psi^{1+q}w^{2}dx \right)(\sigma)d\sigma.
\end{split}
\end{align}
Term II could be estimated as follow
\begin{align}\label{infuFormula12}
\begin{split}
\text{II}\,\phi \leq & 2 \int_{\rho B_{1}}\int_{\rho B_{1}} (\psi(x)-\psi(y))^{2}\phi w^{2}k(x,y) dxdy \\
& \quad
+ 4 \int_{\rho B_{1}}\int_{\mathbb{R}^{n}\backslash(\rho B_{1})}(\psi(x)-\psi(y))^{2}\phi w^{2}k(x,y)dydx \\
\leq & C_{1}(n,\Lambda,\delta)(\rho - \rho')^{-2\beta}\int_{\rho B_{1}}\phi w^{2} dx,
\end{split}
\end{align}
where (\ref{Assumption1}) and
$\sup_{x,y \in \mathbb{R}^{n}}\frac{|\psi(x)-\psi(y)|^{2}}{|x-y|^{2}} \leq \frac{4}{\sigma^{2}(\rho-\rho')^{2}}$
have been used. For term I, noticing the properties of the function $\psi$, we have the following estimate
\begin{align}\label{infuFormula13}
\begin{split}
\text{I} \geq \frac{c_{n,\beta}}{2C\Lambda}\int_{\rho' B_{1}}\int_{\rho' B_{1}}\frac{(w(s,x)-w(s,y))^{2}}{|x-y|^{n+2\beta}}dxdy.
\end{split}
\end{align}
Denote
\begin{align}\label{infuFormula14}
\begin{split}
F(s) = & \frac{1}{2}C_{1}(n,\Lambda,\delta)\vartheta(q)(q-1)(\rho-\rho')^{-2\beta}\int_{\rho B_{1}}\phi w^{2}dx \\
& + q g_{1-\alpha}(s)\phi(s)\int_{\rho B_{1}}\psi^{1+q}w^{2}dx
+ \dot{\phi}(s)\left(g_{1-\alpha}*\int_{\rho B_{1}}\psi^{1+q}w^{2}dx\right)(s)
\end{split}
\end{align}
Using estimates from (\ref{infuFormula11}) to (\ref{infuFormula14}), we obtain
\begin{align}\label{infuFormula15}
\begin{split}
\int_{B_{1}}\phi\psi^{1+q}w^{2}dx + \frac{c_{n,\beta}}{4C\Lambda}g_{\alpha}*\int_{\rho'B_{1}}\int_{\rho'B_{1}}\frac{(w(s,x)-w(s,y))^{2}}{|x-y|^{n+2\beta}}\phi dxdy
\leq g_{\alpha}*F.
\end{split}
\end{align}
We may drop the second term in (\ref{infuFormula15}), which is nonnegative. By Young's inequality for convolution and the
properties of $\phi$ we then infer that for all $1 < p < \min\{ 1/(1-\alpha),3/(2\beta) \}$
\begin{align}\label{infuFormula16}
\left( \int_{t_{2}-t_{1}}^{t_{0}-t_{1}} \left( \int_{B_{1}}(\psi^{\frac{1+q}{2}}(x)w(s,x))^{2} \right)^{p}ds \right)^{1/p}
\leq \|g_{\alpha}\|_{L^{p}([0,t_{0}-t_{1}])}\int_{0}^{t_{0}-t_{1}}F(s)ds.
\end{align}
By simple calculations, we easily know that $\|g_{\alpha}\|_{L^{p}([0,t_{0}-t_{1}])} \leq C_{2}(\alpha,p,\eta) < \infty$.
We will choose any of these $p$ and fix it.

We could also drop the first term in (\ref{infuFormula15}), convolve the resulting inequality with $g_{1-\alpha}$,
then obtaining
\begin{align}\label{infuFormula17}
\begin{split}
\|w\|^{2}_{L^{2}([t_{2}-t_{1},t_{0}-t_{1}];H^{\beta}(\rho'B_{1}))} \leq 4C\Lambda\int_{0}^{t_{0}-t_{1}}F(s)ds.
\end{split}
\end{align}
Considering (\ref{infuFormula16}),(\ref{infuFormula17}) and Remark \ref{embeddingVp}, we infer that
\begin{align}\label{infuFormula18}
\begin{split}
\|w\|^{2}_{L^{2\kappa}([t_{2}-t_{1},t_{0}-t_{1}]\times \rho'B_{1})}
\leq C(n,\Lambda,p,\alpha,\eta) \int_{0}^{t_{0}-t_{1}}F(s)ds.
\end{split}
\end{align}
For a.e. $s \in (0,t_{0}-t_{1})$, we have
\begin{align*}
F(s) \leq & \left( \frac{C_{1}\vartheta(q)(q-1)}{2(\rho-\rho')^{2\beta}}
+ qg_{1-\alpha}((t_{2}-t_{1})/2) \right)\int_{\rho B_{1}}w^{2} dxds \\
& + \frac{4}{t_{2}-t_{1}}\left( g_{1-\alpha}*\int_{\rho B_{1}}w^{2}dx \right)(s).
\end{align*}
In addition, we obtain
\begin{align}\label{infuFormula19}
\begin{split}
\int_{0}^{t_{0}-t_{1}}F(s)ds \leq & \left( \frac{C_{1}\vartheta(q)(q-1)}{2(\rho-\rho')^{2\beta}}
+ \frac{2^{\alpha}q(\sigma\eta)^{-\alpha}}{\Gamma(1-\alpha)(\rho-\rho')^{\alpha}} \right)\int_{0}^{t_{0}-t_{1}}\int_{\rho B_{1}}w^{2}dxds \\
& + \frac{4}{(\rho-\rho')\sigma\eta}\int_{0}^{t_{0}-t_{1}}g_{2-\alpha}(t_{0}-t_{1}-\tau)\int_{\rho B_{1}}w^{2}dxd\tau \\
\leq & C(n,\alpha,\beta,\Lambda,\delta,\eta)\frac{q}{(\rho-\rho')^{2}}\int_{0}^{t_{0}-t_{1}}\int_{\rho B_{1}}w^{2}dxds.
\end{split}
\end{align}
Combing (\ref{infuFormula18}) and the above estimates (\ref{infuFormula19}), we deduce that
\begin{align}\label{infuFormula20}
\|w\|_{L^{2\kappa}([t_{2}-t_{1},t_{0}-t_{1}]\times \rho'B_{1})} \leq C(n,\alpha,\beta,\Lambda,\delta,\eta,p)
\frac{q}{\rho-\rho'}\|w\|_{L^{2}([0,t_{0}-t_{1}]\times\rho B_{1})},
\end{align}
where $\kappa = 1 + \frac{2\beta}{3} > 1$.
Because $w = \tilde{u}^{\frac{1-q}{2}}$ and by transforming back to the time variable $t$, we find that (\ref{infuFormula20})
is equivalent to
\begin{align*}
\left(\int_{V_{\rho'}}u^{(1-q)\kappa}dxdt\right)^{\frac{1}{2\kappa}} \leq \frac{C(n,\alpha,\beta,\Lambda,\delta,\eta,p)q}{\rho-\rho'}
\left( \int_{V_{\rho}}u^{(1-q)}dxdt \right)^{\frac{1}{2}}.
\end{align*}
Taking $\gamma = q-1$, we have
\begin{align*}
\|u^{-1}\|_{L^{\gamma\kappa}(V_{\rho'})} \leq \left(\frac{C^{2}(1+\gamma)^{2}}{(\rho-\rho')^{2}}\right)^{1/\gamma}\|u^{-1}\|_{L^{\gamma}(V_{\rho})},
\quad 0 < \rho' < \rho \leq 1,\,\,\gamma > 0.
\end{align*}
Using Lemma \ref{MoserFirst} with $\bar{p} = 1$, there will be a constant $M = M(\Lambda,\delta,\eta,\alpha,\beta,p,n)$
and $\tau_{0} = \tau_{0}(\beta,n)$ such that
\begin{align*}
\esssup_{V_{\theta}}u^{-1} \leq \left( \frac{M_{0}}{(1-\theta)^{\tau_{0}}} \right)^{1/\gamma}\|u^{-1}\|_{L^{\gamma}(V_{1})}
\quad \text{for all }\theta\in(0,1),\,\,\gamma \in (0,1].
\end{align*}
Then if we take $\theta = \frac{\sigma'}{\sigma}$ and notice that $\frac{1}{1-\theta} = \frac{\sigma}{\sigma - \sigma'} \leq \frac{1}{\sigma-\sigma'}$, we obtain
\begin{align*}
\esssup_{U_{\sigma'}}u^{-1}\leq \left( \frac{M_{0}}{(\sigma-\sigma')^{\tau_{0}}} \right)^{1/\gamma}\|u^{-1}\|_{L^{\gamma}(U_{\sigma})},
\quad \gamma \in (0,1].
\end{align*}
Now, the proof is complete.
\end{proof}

\subsection{An estimate for small positive moments of $\mathbf{u}$}

The aim of this subsection is to estimate the $L^{1}$-norm of supersolutions $u$ from above
by the $L^{1}$-norm of $u^{\gamma}$ for small values of $\gamma > 0$.
\begin{theorem}\label{positivemomentEst}
Let $\Omega\subset\mathbb{R}^{n}$, $\alpha \in (0,1)$, $T > 0$, $k \in \mathcal{R}(\beta_{0},\Lambda)$ with $\beta_{0}\in(n/4,1)$ and $\Lambda \geq \max\{1,\beta_{0}^{-1}\}$.
Let further $\eta > 0$ and $\delta \in (0,1)$ be fixed.
Then for any $t_{0}\in[0,T)$ and $r > 0$ with $t_{0} + \eta r^{2\beta/\alpha} \leq T$, and ball $B = B(x_{0},r)\subset\Omega$,
and any nonnegative weak supersolution $u$ of (\ref{TimeSpaceEquation1}) in
$(0,t_{0}+\eta r^{2\beta/\alpha})\times B$ with $u_{0} \geq 0$ in $B$ and $f = 0$, there holds
\begin{align*}
\|u\|_{L^{1}(U_{\sigma'}')} \leq \left( \frac{C\mu_{n+1}(U_{1}')}{(\sigma-\sigma')^{\tau_{0}}} \right)^{1/\gamma-1}
\|u\|_{L^{\gamma}(U_{\sigma}')}, \quad \delta\leq\sigma'<\sigma\leq 1,\,\, 0<\gamma\leq \kappa^{-1}.
\end{align*}
Here $U_{\sigma}' = (t_{0},t_{0}+\sigma\eta r^{2\beta/\alpha})\times\sigma B$, $C = C(\Lambda,\delta,\eta,\alpha,\beta,n)$,
and $\tau_{0} = \tau_{0}(\beta,n)$.
\end{theorem}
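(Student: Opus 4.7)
The plan is to mimic the proof of Theorem \ref{infuTheorem}, choosing the exponent in the test function so that the quantity that Moser-iterates is a \emph{positive}, rather than a negative, power of the solution. Whereas before one used $\varphi=\psi^{1+q}\tilde u^{-q}$ with $q>1$ (so that $w=\tilde u^{(1-q)/2}$ was a negative power of $\tilde u$), here I will take $q\in(0,1)$, so that $p:=1-q\in(0,1)$ and $w=\tilde u^{p/2}$ is a positive power of $\tilde u$. Since $u$ may vanish, I replace $u$ by $u_{\varepsilon}:=u+\varepsilon$ inside the nonlinearity, run the argument for $u_{\varepsilon}$, and let $\varepsilon\downarrow 0$ at the end.

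By the scaling property we may assume $r=1$ and $x_{0}=0$. Fix $\delta\leq\sigma'<\sigma\leq 1$, set $B_{1}=\sigma B$, and $V_{\rho}:=U_{\rho\sigma}'=(t_{0},t_{0}+\rho\sigma\eta)\times\rho B_{1}$ for $\rho\in(0,1]$. Given $0<\rho'<\rho\leq 1$, introduce spatial cutoffs $\psi\in C_{0}^{1}(\rho B_{1})$ as in (\ref{infuFormula2}) and forward-in-time cutoffs $\phi$ supported on $[0,\rho\sigma\eta]$ with $\phi\equiv 1$ on $[0,\rho'\sigma\eta]$ (the endpoint roles are swapped compared with (\ref{infuFormula5})). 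Inserting $\varphi=\psi^{1+q}u_{\varepsilon}^{-q}$, $q\in(0,1)$, into the weak supersolution inequality (\ref{WeakFormulation2}) and applying the fundamental identity of the Appendix with $H(y)=(1-q)^{-1}y^{1-q}$ leads, after convolution with $g_{\alpha}$, use of $g_{\alpha}*g_{1-\alpha}=1$, and the limit $m\to\infty$, to an analogue of (\ref{infuFormula9}). The factor $1-q$ now being positive, the coercive bilinear contribution appears with coefficient $1-q$ (in place of $q-1$), and the inequality runs in the direction that upper-bounds $\int_{B_{1}}\phi\psi^{1+q}u_{\varepsilon}^{1-q}\,dx$ in terms of the energy and memory terms, as required for the reverse iteration.

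From here the argument runs parallel to Theorem \ref{infuTheorem}: Lemma \ref{FundamentalIdentity4}(1) bounds the bilinear form below in terms of $w=u_{\varepsilon}^{(1-q)/2}$, the tail contribution is absorbed using (\ref{Assumption1}), the Sobolev embedding of Remark \ref{embeddingVp} converts the resulting Caccioppoli inequality into an $L^{2\kappa}$--$L^{2}$ bound for $w$, and letting $\varepsilon\downarrow 0$ (monotone convergence, since $w\downarrow u^{(1-q)/2}$ pointwise) yields the recursion
\begin{align*}
\|u\|_{L^{p\kappa}(V_{\rho'})}\leq \Bigl(\frac{C}{(\rho-\rho')^{\tau}}\Bigr)^{1/p}\|u\|_{L^{p}(V_{\rho})},\qquad 0<p\leq 1,\ 0<\rho'<\rho\leq 1,
\end{align*}
with $C=C(\Lambda,\delta,\eta,\alpha,\beta,n)$ and $\tau=\tau(\beta,n)$. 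Starting from $p=\gamma\in(0,\kappa^{-1}]$ and iterating this recursion along a geometrically shrinking sequence of radii (Lemma \ref{MoserFirst} in its reverse formulation, or a direct telescoping), so that the geometric series of exponents sums to produce the constant $(C/(\sigma-\sigma')^{\tau_{0}})^{1/\gamma-1}$, one obtains the desired $L^{1}$-norm bound on $U_{\sigma'}'$.

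The main obstacle I anticipate is careful bookkeeping of signs: the passage from $q>1$ to $q\in(0,1)$ reverses several inequalities in the chain leading to (\ref{infuFormula4})--(\ref{infuFormula11}), and one must verify that the coercivity of the bilinear form survives in the correct direction and that the memory terms arising from the fractional time derivative still appear on the favorable side. A subsidiary difficulty is the $\varepsilon$-regularization: because the strict positivity hypothesis $u\geq\varepsilon>0$ of Theorem \ref{infuTheorem} is no longer available, each occurrence of $u^{-q}$ must be interpreted through $u_{\varepsilon}$, and the passage $\varepsilon\downarrow 0$ has to be justified term by term, in particular for the nonlocal energy and for the convolution with $g_{1-\alpha,m}$.
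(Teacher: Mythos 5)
Your overall architecture (small positive powers $w=\tilde u^{(1-q)/2}$ with $q\in(0,1)$, the $\varepsilon$-shift $u\mapsto u+\varepsilon$, the time cutoff with the endpoint roles reversed, Sobolev embedding via Remark \ref{embeddingVp}, and a finite iteration only up to $L^{1}$) is indeed the paper's strategy, but there is a genuine gap at the central step: the treatment of the fractional time-derivative term. For $q\in(0,1)$ the fundamental identity gives $-\tilde u^{-q}\partial_{s}(g_{1-\alpha,m}*\tilde u)\geq -\tfrac{1}{1-q}\partial_{s}(g_{1-\alpha,m}*\tilde u^{1-q})$, so after testing and multiplying by $1-q>0$ the term $-\int\partial_{s}(g_{1-\alpha,m}*[\phi\psi^{2}w^{2}])dx$ enters with the \emph{unfavorable} sign. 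Convolving with $g_{\alpha}$ as in Theorem \ref{infuTheorem} then produces $-h_{m}*W$ on the left (with $W=\int\phi\psi^{2}w^{2}dx$), which only bounds the energy by $h_{m}*W$ plus memory terms; it does \emph{not} ``upper-bound $\int\phi\psi^{1+q}u^{1-q}dx$ in terms of the energy'' as you assert, and it leaves the $L^{p}$-in-time control of $\int(\psi w)^{2}dx$ --- which Remark \ref{embeddingVp} requires in addition to the energy --- completely open, so the recursion does not close. The paper's proof supplies exactly this missing device: it sets $G_{m}:=\partial_{s}^{\alpha}(h_{m}*W)+F_{m}\geq 0$, uses $h_{m}*W=g_{\alpha}*\partial_{s}^{\alpha}(h_{m}*W)\leq g_{\alpha}*G_{m}+g_{\alpha}*[-F_{m}]^{+}$ together with Young's inequality, evaluates $\|G_{m}\|_{L^{1}([0,t_{*}])}=(g_{1-\alpha,m}*W)(t_{*})+\int_{0}^{t_{*}}F_{m}$ by positivity, and chooses $t_{*}$ in a region where the (reversed) cutoff $\phi$ has already vanished so that $(g_{1-\alpha}*W)(t_{*})\leq g_{1-\alpha}((t_{2}-t_{1})/4)\iint w^{2}$; the same boundary term must also be retained (not dropped) when extracting the energy estimate (\ref{posiTheo19}). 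Without some substitute for this argument your ``analogue of (\ref{infuFormula9})'' simply does not exist.

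Two further points need repair. First, the spatial estimate for $q\in(0,1)$ must invoke Lemma \ref{FundamentalIdentity4}~(2) (constants $\zeta_{1}(q),\zeta_{2}(q)$), not part (1), and the coercivity constant $(1-q)\zeta_{1}(q)=\tfrac{2q}{3}$ degenerates as $q\to 0$; consequently your claimed recursion with a constant independent of $p$ for all $0<p\leq 1$ is false --- it is uniform only for $p\leq\kappa^{-1}$ (equivalently $q\geq 1-\kappa^{-1}$), which is precisely why the theorem stops at the $L^{1}$-norm and why the paper closes the iteration with Lemma \ref{MoserSecond} (after normalizing the measure) rather than with Lemma \ref{MoserFirst} ``in reverse'' or a naive telescoping up to exponent $1$. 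The $\varepsilon$-regularization itself is unproblematic and is done in the paper in the same way.
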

\begin{proof}
The proof of this theorem is similar to the proof of Theorem \ref{infuTheorem}. Without loss of generality, we assume $r = 1$.
Replacing $u$ with $u + \epsilon$ and $u_{0}$ with $u_{0} + \epsilon$ and eventually letting $\epsilon \rightarrow 0^{+}$,
we could assume that $u$ is bounded away from zero.

Fix $\sigma'$, $\sigma$ such that $\delta \leq \sigma' < \sigma \leq 1$ and let $B_{1} = \sigma B$.
For $\rho \in (0,1]$, we set $V_{\rho}' = U_{\rho\sigma}'$. Given $0 < \rho' < \rho \leq 1$, let $t_{1} = t_{0} + \rho'\sigma\eta$
and $t_{2} = t_{0} + \rho\sigma\eta$, so $0 \leq t_{0} < t_{1} < t_{2}$. We shift the time by means of $s = t-t_{0}$
and set $\tilde{f}(s) := f(s+t_{0})$, $s \in (0,t_{2}-t_{0})$, for functions $f$ defined on $(t_{0},t_{2})$.

Let $\gamma \in (0,\kappa^{-1}]$ and $q = 1-\gamma \in [1-\kappa^{-1},1)$, then repeating the proof of (\ref{infuFormula3}) will
lead to the following inequality
\begin{align}\label{posiTheo1}
-\tilde{u}^{-1}\partial_{s}(g_{1-\alpha,m}*\tilde{u}) \geq \frac{-1}{1-q}\partial_{s}(g_{1-\alpha,m}*\tilde{u}^{1-q}),
\quad \text{a.e. }(s,x)\in (0,t_{2}-t_{0})\times B.
\end{align}
Taking $\varphi(s,x) = \psi^{2}(x)\tilde{u}^{-q}(s,x)$ with $\psi \in C_{0}^{1}(B_{1})$ as in the proof of Theorem \ref{infuTheorem}, we infer that
\begin{align}\label{posiTheo2}
-\frac{1}{1-q}\int_{B_{1}}\partial_{s}\left( g_{1-\alpha,m}*(\psi^{2}\tilde{u}^{1-q}) \right)dx
+ \mathcal{E}(h_{m}*\tilde{u},-\psi^{2}\tilde{u}^{-q}) \leq 0,
\end{align}
for a.e. $s\in(0,t_{2}-t_{0})$. Next, we choose a function $\phi \in C^{1}([0,t_{2}-t_{0}])$ such that
\begin{align}\label{posiTheo3}
\begin{split}
& \quad\quad\quad\quad\quad
0 \leq \phi \leq 1, \quad 0 \leq -\dot{\phi} \leq \frac{4}{t_{2}-t_{1}}, \\
& \phi = 1 \text{ in }[0,t_{1}-t_{0}], \quad \phi = 0 \text{ in }[t_{1}-t_{0}+(t_{2}-t_{1})/2,t_{2}-t_{0}].
\end{split}
\end{align}
Multiplying (\ref{posiTheo2}) by $1-q > 0$ and by $\phi(s)$, and applying Lemma \ref{FundamentalIdentity3} presented in Appendix
to the first term gives
\begin{align}\label{posiTheo4}
\begin{split}
& - \int_{B_{1}}\partial_{s}(g_{1-\alpha,m}*(\phi\psi^{2}\tilde{u}^{1-q}))dx + (1-q)\phi \mathcal{E}(\tilde{u},-\psi^{2}\tilde{u}^{-q})  \\
& \quad\quad\quad\quad
\leq \int_{0}^{s}\dot{g}_{1-\alpha,m}(s-\tau)(\phi(s)-\phi(\tau))
\left( \int_{B_{1}}\psi^{2}\tilde{u}^{1-q}dx \right)(\tau)d\tau + R_{m}(s),
\end{split}
\end{align}
where
\begin{align*}
R_{m}(s) = (1-q)\phi\left[ \mathcal{E}(h_{m}*\tilde{u},\psi^{2}\tilde{u}^{-q})
- \mathcal{E}(\tilde{u},\psi^{2}\tilde{u}^{-q}) \right].
\end{align*}
Now, as in the proof of Theorem \ref{infuTheorem}, we denote $w = \tilde{u}^{\frac{1-q}{2}}$.
Here, we estimate term $\mathcal{E}(\tilde{u},-\psi^{2}\tilde{u}^{-q})$ firstly as follow
\begin{align}\label{posiTheo5}
\begin{split}
\mathcal{E}(\tilde{u},-\psi^{2}\tilde{u}^{-q}) =  \frac{1}{2}\text{I} + \text{II},
\end{split}
\end{align}
where
\begin{align*}
\text{I} = \int_{\rho B_{1}}\int_{\rho B_{1}}
(\tilde{u}(s,x)-\tilde{u}(s,y))(\psi^{2}(y)\tilde{u}^{-q}(s,y) - \psi^{2}(x)\tilde{u}^{-q}(s,x))k(x,y)dxdy,
\end{align*}
and
\begin{align*}
\text{II} = \int_{\rho B_{1}}\int_{\mathbb{R}^{n}\backslash \rho B_{1}}
(\tilde{u}(s,x)-\tilde{u}(s,y))(-\psi^{2}(x)\tilde{u}^{-q}(s,x))k(x,y)dydx.
\end{align*}
For II, using (\ref{Assumption1}), the positivity of $\tilde{u}$ and the fact that
$\frac{(\psi(x)-\psi(y))^{2}}{|x-y|^{2}} \leq C(\delta)(\rho-\rho')^{-2}$, we could estimate as follow
\begin{align}\label{posiTheo6}
\begin{split}
\text{II} \geq C(\delta,\Lambda) (\rho - \rho')^{-2\beta}\int_{\rho B_{1}}w^{2}(s,x)dx.
\end{split}
\end{align}
Considering (\ref{posiTheo5}) and (\ref{posiTheo6}), inequality (\ref{posiTheo4}) can be changed to
\begin{align}\label{posiTheo7}
\begin{split}
& -\int_{B_{1}}\partial_{s}(g_{1-\alpha,m}*[\phi\psi^{2}w^{2}])dx + (1-q)\frac{1}{2}\phi \cdot \text{I} \\
\leq & \int_{0}^{s}\dot{g}_{1-\alpha,m}(s-\tau)(\phi(s)-\phi(\tau))\left( \int_{B_{1}}\psi^{2}\tilde{u}^{1-q}dx \right)(\tau)d\tau \\
& + C(\delta,\Lambda)(1-q)(\rho-\rho')^{-2\beta}\phi(s)\int_{\rho B_{1}}w^{2}dx + R_{m}(s).
\end{split}
\end{align}
Applying Lemma \ref{FundamentalIdentity4} (2) we could estimate I as follow
\begin{align}\label{posiTheo8}
\begin{split}
\text{I} \geq & \zeta_{1}(q)\int_{\rho B_{1}}\int_{\rho B_{1}}\left[ \psi(x)w(s,x) - \psi(y)w(s,y) \right]^{2}k(x,y)dxdy   \\
& - \zeta_{2}(q)\int_{\rho B_{1}}\int_{\rho B_{1}}(\psi(x)-\psi(y))^{2}(w^{2}(s,x) + w^{2}(s,y))k(x,y)dxdy,
\end{split}
\end{align}
where $\zeta_{1}(q)$, $\zeta_{2}(q)$ are defined as in Lemma \ref{FundamentalIdentity4}.
Because
\begin{align}\label{posiTheo9}
\begin{split}
(1-q)\zeta_{1}(q) = \frac{2q}{3} \geq \frac{2}{3}\frac{\beta_{0}}{n+2} =: c_{1} = c_{1}(n,\beta_{0}),
\end{split}
\end{align}
and
\begin{align}\label{posiTheo10}
\begin{split}
& \int_{\rho B_{1}}\int_{\rho B_{1}}\left[ \psi(x)w(s,x) - \psi(y)w(s,y) \right]^{2}k(x,y)dxdy \\
& \quad\quad\quad\quad\quad\quad
\geq \int_{\rho' B_{1}}\int_{\rho' B_{1}}\left[ w(s,x) - w(s,y) \right]^{2}k(x,y)dxdy,
\end{split}
\end{align}
then from (\ref{posiTheo7}), (\ref{posiTheo8}), we arrive at
\begin{align}\label{posiTheo11}
\begin{split}
& -\int_{B_{1}}\partial_{s}(g_{1-\alpha,m}*[\phi\psi^{2}w^{2}])dx +
\frac{1}{2}c_{1}\text{III} \\
\leq & \int_{0}^{s}\dot{g}_{1-\alpha,m}(s-\tau)(\phi(s)-\phi(\tau))\left( \int_{B_{1}}\psi^{2}\tilde{u}^{1-q}dx \right)(\tau)d\tau \\
+ & (1-q)\zeta_{2}(q)\phi\int_{\rho B_{1}}\int_{\rho B_{1}}(\psi(x)-\psi(y))^{2}(w^{2}(s,x) + w^{2}(s,y))k(x,y)dxdy \\
+ & C(\delta,\Lambda)(1-q)(\rho-\rho')^{-2\beta}\phi(s)\int_{\rho B_{1}}w^{2}dx + R_{m}(s),
\end{split}
\end{align}
where
\begin{align*}
\text{III} = \phi\int_{\rho' B_{1}}\int_{\rho' B_{1}}\left[ w(s,x) - w(s,y) \right]^{2}k(x,y)dxdy.
\end{align*}
Using (\ref{Assumption1}) and the properties of $\psi$, we have
\begin{align}\label{posiTheo12}
\begin{split}
& \int_{\rho B_{1}}\int_{\rho B_{1}}(\psi(x)-\psi(y))^{2}(w^{2}(s,x) + w^{2}(s,y))k(x,y)dxdy \\
& \quad\quad\quad\quad\quad\quad
\leq C(\delta,\Lambda)(\rho-\rho')^{-2\beta}\int_{\rho B_{1}}w^{2}(s,x)dx.
\end{split}
\end{align}
Because
\begin{align*}
(1-q)\zeta_{2}(q) \leq 4 + 9\frac{n+2}{\beta_{0}} =: c_{2} = c_{2}(n,\beta_{0}),
\end{align*}
and using (\ref{posiTheo12}), we know that
\begin{align}\label{posiTheo13}
\begin{split}
& (1-q)\zeta_{2}(q)\phi\int_{\rho B_{1}}\int_{\rho B_{1}}(\psi(x)-\psi(y))^{2}(w^{2}(s,x) + w^{2}(s,y))k(x,y)dxdy  \\
& \leq c_{2}C(\delta,\Lambda)(\rho-\rho')^{-2\beta}\phi\int_{\rho B_{1}}w^{2}(s,x)dx
= c_{3}(\rho-\rho')^{-2\beta}\phi\int_{\rho B_{1}}w^{2}(s,x)dx,
\end{split}
\end{align}
where $c_{3} := c_{3}(\delta,\Lambda,n,\beta_{0})$.
Combining (\ref{posiTheo11}) and (\ref{posiTheo13}), we obtain
\begin{align}\label{posiTheo14}
\begin{split}
& -\int_{B_{1}}\partial_{s}(g_{1-\alpha,m}*[\phi\psi^{2}w^{2}])dx +
\frac{1}{2}c_{1}\text{III} \\
\leq & \int_{0}^{s}\dot{g}_{1-\alpha,m}(s-\tau)(\phi(s)-\phi(\tau))\left( \int_{B_{1}}\psi^{2}\tilde{u}^{1-q}dx \right)(\tau)d\tau \\
& + c_{4}(\rho-\rho')^{-2\beta}\phi(s)\int_{\rho B_{1}}w^{2}(s,x)dx + R_{m}(s),
\end{split}
\end{align}
where $c_{4} = c_{4}(\delta,\Lambda,n,\beta_{0})$.
Putting
\begin{align*}
W(s) = \int_{B_{1}}\phi(s)\psi^{2}(x)w^{2}(s,x)dx,
\end{align*}
and denoting the right hand side of (\ref{posiTheo14}) by $F_{m}(s)$, it follows from (\ref{posiTheo14}) that
\begin{align*}
G_{m}(s) = \partial_{s}^{\alpha}(h_{m}*W)(s) + F_{m}(s) \geq 0, \quad \text{a.e. }s\in (0,t_{2}-t_{0}).
\end{align*}
We obviously have the following inequality
\begin{align*}
0 \leq h_{m}*W = g_{\alpha}*\partial_{s}^{\alpha}(h_{m}*W) \leq g_{\alpha}*G_{m} + g_{\alpha}*[-F_{m}(s)]^{+}
\end{align*}
a.e. in $(0,t_{2}-t_{0})$. For any $1 < p < \min\{ 1/(1-\alpha), 3/(2\beta) \}$ and any $t_{*} \in [t_{2}-t_{0}-(t_{2}-t_{1})/4,t_{2}-t_{0}]$,
by Young's inequality, we obtain
\begin{align}\label{posiTheo15}
\|h_{m}*W\|_{L^{p}([0,t_{*}])} \leq \|g_{\alpha}\|_{L^{p}([0,t_{*}])}
\left( \|G_{m}\|_{L^{1}([0,t_{*}])} + \|[-F_{m}]^{+}\|_{L^{1}([0,t_{*}])} \right).
\end{align}
Because $t_{*} \leq t_{2}-t_{0} \leq \eta$, we have $\|g_{\alpha}\|_{L^{p}([0,t_{*}])} \leq C < \infty$ by some simple calculations.
By positivity of $G_{m}$, we obtain
\begin{align}\label{posiTheo16}
\begin{split}
\|G_{m}\|_{L^{1}([0,t_{*}])} = (g_{1-\alpha,m}*W)(t_{*}) + \int_{0}^{t_{*}} F_{m}(s) ds.
\end{split}
\end{align}
Observe that $R_{m} \rightarrow 0$ in $L^{1}(0,t_{2}-t_{0})$ as $m \rightarrow \infty$.
Hence, $\|[-F_{m}]^{+}\|_{L^{1}([0,t_{*}])} \rightarrow 0$ as $m \rightarrow \infty$.
For the first term on the right hand side of (\ref{posiTheo14}), integrate for $s$ from $0$ to $t_{*}$,
we have the following estimate
\begin{align}\label{posiTheo17}
\begin{split}
& \int_{0}^{t_{*}}\int_{0}^{s}\dot{g}_{1-\alpha,m}(s-\tau)(\phi(s)-\phi(\tau))\left( \int_{B_{1}}\psi^{2}\tilde{u}^{1-q}dx \right)(\tau)d\tau ds \\
& = \int_{0}^{t_{*}}g_{1-\alpha,m}(t_{*}-\tau)(\phi(t_{*})-\phi(\tau))\left(\int_{B_{1}} \psi^{2}w^{2}dx \right)(\tau)d\tau \\
& \quad\quad
- \int_{0}^{t_{*}}\dot{\phi}(s)\int_{0}^{s}g_{1-\alpha,m}(s-\tau)\left( \int_{B_{1}}\psi^{2}w^{2}dx \right)(\tau)d\tau ds \\
& \leq - \int_{0}^{t_{*}}\dot{\phi}(s)\int_{0}^{s}g_{1-\alpha,m}(s-\tau)\left( \int_{B_{1}}\psi^{2}w^{2}dx \right)(\tau)d\tau ds.
\end{split}
\end{align}
Noticing that $g_{1-\alpha,m}*W \rightarrow g_{1-\alpha}*W$ in $L^{1}(0,t_{2}-t_{0})$ and fixing some
$t_{*} \in [t_{2}-t_{0}-(t_{2}-t_{1})/4, t_{2}-t_{0}]$ such that for some subsequence
$(g_{1-\alpha,m}*W)(t_{*}) \rightarrow (g_{1-\alpha}*W)(t_{*})$ as $m\rightarrow \infty$.
Sending $m \rightarrow \infty$, it follows from (\ref{posiTheo15}),(\ref{posiTheo17}) that
\begin{align}\label{posiTheo18}
\begin{split}
\left( \int_{0}^{t_{1}-t_{0}}\left( \int_{B_{1}} (\psi w)^{2} dx \right)^{p}ds \right)^{1/p}
\leq C \left( (g_{1-\alpha}*W)(t_{*}) + \|F\|_{L^{1}([0,t_{2}-t_{0}])} \right),
\end{split}
\end{align}
where
\begin{align*}
F(s) = -\dot{\phi}(s)\left( g_{1-\alpha}*\int_{B_{1}}\psi^{2}w^{2}dx \right)(s)
+ c_{4}(\rho-\rho')^{-2\beta}\int_{\rho B_{1}}w^{2}(s,x)dx.
\end{align*}
Dropping the first term in (\ref{posiTheo14}), integrating (\ref{posiTheo14}) over $(0,t_{*})$
and taking the limit as $m\rightarrow \infty$ for the same sequence as before, we obtain
\begin{align}\label{posiTheo19}
\int_{0}^{t_{1}-t_{0}}
\int_{\rho'B_{1}}\int_{\rho'B_{1}} (w(s,x)-w(s,y))^{2}k(x,y)dxdyds \leq C \int_{0}^{t_{2}-t_{0}}F(s)ds.
\end{align}
Recalling Remark \ref{embeddingVp} and (\ref{Assumption2}), now we can conclude from (\ref{posiTheo18}) and (\ref{posiTheo19}) that
\begin{align}\label{posiTheo20}
\begin{split}
\|w\|_{L^{2\kappa}([0,t_{1}-t_{0}]\times\rho'B_{1})}^{2} \leq C
\left( (g_{1-\alpha}*W)(t_{*}) + \|F\|_{L^{1}([0,t_{2}-t_{0}])} \right).
\end{split}
\end{align}
Because $\phi = 0$ in $[t_{1}-t_{0}+(t_{2}-t_{1})/2, t_{2}-t_{0}]$ and $t_{*} \in [t_{2}-t_{0}-(t_{2}-t_{1})/4, t_{2}-t_{0}]$, we have
\begin{align*}
(g_{1-\alpha}*W)(t_{*}) \leq & g_{1-\alpha}((t_{2}-t_{1})/4)\int_{0}^{t_{2}-t_{0}}\int_{\rho B_{1}}w^{2}dxds \\
= & \frac{4^{\alpha}}{\Gamma(1-\alpha)(\sigma\eta)^{\alpha}(\rho-\rho')^{\alpha}}\int_{0}^{t_{2}-t_{0}}\int_{\rho B_{1}}w^{2}dxds.
\end{align*}
As in the proof of (\ref{infuFormula19}), we could obtain
\begin{align*}
\|F\|_{L^{1}([0,t_{2}-t_{0}])} \leq \frac{C(\Lambda,\delta,\eta,\beta,\alpha,n)}{(\rho-\rho')^{2}}\int_{0}^{t_{2}-t_{0}}\int_{\rho B_{1}}w^{2}dxds.
\end{align*}
Plugging the above two inequalities into (\ref{posiTheo20}), we arrive at
\begin{align*}
\|w\|_{L^{2\kappa}([0,t_{1}-t_{0}]\times\rho'B_{1})} \leq \frac{C(\Lambda,\delta,\eta,\beta,\alpha,n)}{\rho-\rho'}
\|w\|_{L^{2}([0,t_{2}-t_{0}]\times\rho B_{1})}.
\end{align*}
Remembering $\gamma = 1-q$ and transforming the above inequality back to $u$ to obtain
\begin{align}\label{posiTheo21}
\|u\|_{L^{\gamma\kappa}(V_{\rho'}',d\mu)} \leq \left( \frac{C}{(\rho-\rho')^{2}} \right)^{1/\gamma}\|u\|_{L^{\gamma}(V_{\rho}',d\mu)},
\quad 0 < \rho' < \rho \leq 1,
\end{align}
where $\mu = (\eta \omega_{n})^{-1}\mu_{n+1}$, $\omega_{n}$ the volume of the unit ball in $\mathbb{R}^{n}$.

Employing Lemma \ref{MoserSecond}, we know that there are constants $M_{0} = M_{0}(\Lambda,\delta,\eta,\alpha,\beta,n)$ and
$\tau_{0} = \tau_{0}(n,\beta)$ such that
\begin{align*}
\|u\|_{L^{p_{0}}(V_{\theta}',d\mu)}\leq \left( \frac{M_{0}}{(1-\theta)^{\tau_{0}}} \right)^{1/\gamma - 1}
\|u\|_{L^{\gamma}(V_{1}',d\mu)}, \quad 0 < \theta < 1.
\end{align*}
If we take $\theta = \frac{\sigma'}{\sigma}$ and translate the above inequality to the Lebesgue measure, we obtain
\begin{align}\label{posiTheo22}
\|u\|_{L^{1}(U_{\sigma'}')} \leq \left( \frac{M_{0}(\eta\omega_{n})^{-1}}{(\sigma-\sigma')^{\tau_{0}}} \right)^{1/\gamma - 1}
\|u\|_{L^{\gamma}(U_{\sigma}')}, \quad \gamma \in (0,\kappa^{-1}].
\end{align}
Hence, our proof is complete.
\end{proof}

\subsection{An estimate for $\textbf{log}\,\mathbf{u}$}

\begin{theorem}\label{estimateLogU}
Let $\alpha \in (0,1)$, $T > 0$, $k \in \mathcal{R}(\beta_{0},\Lambda)$ with $\beta_{0}\in(n/4,1)$ and $\Omega\subset\mathbb{R}^{n}$.
Let further $\eta > 0$ and $\delta \in (0,1)$ be fixed.
Then for any $t_{0}\geq 0$ and $r > 0$ with $t_{0}+\tau r^{2\beta/\alpha} \leq T$, any ball $B = B(x_{0},r)\subset\Omega$,
and any positive weak supersolution $u\geq \epsilon > 0$ of (\ref{TimeSpaceEquation1}) in
$(0,t_{0}+\tau r^{2\beta/\alpha}) \times B$ with $u_{0}\geq 0$ in $B$ and $f = 0$,
there is a constant $c = c(u)$ such that
\begin{align}\label{logTheorem1}
\mu_{n+1}(\{ (t,x)\in K_{-}\,:\, \log u(t,x) > c+\lambda \}) \leq C r^{2\beta/\alpha} \mu_{n}(B)\lambda^{-1}, \quad \lambda > 0,
\end{align}
and
\begin{align}\label{logTheorem2}
\mu_{n+1}(\{ (t,x\in K_{+} \,:\, \log u(t,x) < c-\lambda \}) \leq C r^{2\beta/\alpha} \mu_{n}(B)\lambda^{-1}, \quad \lambda > 0,
\end{align}
where $K_{-} := (t_{0},t_{0}+\eta\tau r^{2\beta/\alpha})\times\delta B$ and
$K_{+} := (t_{0}+\eta\tau r^{2\beta/\alpha},t_{0}+\tau r^{2\beta/\alpha})\times\delta B$.
Here the constant $C$ depends on $\delta,\eta,\tau,n,\alpha,\beta_{0},\Lambda$.
\end{theorem}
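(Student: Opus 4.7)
The plan is to adapt the classical Moser logarithmic test function technique to the fractional setting, following the same pattern used in the proofs of Theorems \ref{infuTheorem} and \ref{positivemomentEst} but with the ``borderline'' exponent $q=1$ replaced by its logarithmic analogue. First I would fix a cutoff $\psi\in C_{0}^{1}(B)$ with $0\leq\psi\leq 1$, $\psi\equiv 1$ on a ball intermediate between $\delta B$ and $B$, and $|D\psi|\leq C(\delta)r^{-1}$, then choose in the Yosida-regularized weak formulation (\ref{WeakFormulation2}) the test function $\varphi(s,x)=\psi^{2}(x)\tilde u^{-1}(s,x)$, where $\tilde u(s,x)=u(s+t_{0},x)$. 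The key chain-rule substitute is the fundamental identity from the Appendix applied with $H(y)=-\log y$, yielding an estimate of the form
$$-\tilde u^{-1}\partial_{s}(g_{1-\alpha,m}\ast\tilde u)\geq\partial_{s}(g_{1-\alpha,m}\ast(-\log\tilde u))+\mathcal{R}_{m}(s,x),$$
with a nonnegative remainder $\mathcal{R}_{m}$ arising from the convexity of $-\log$.

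Next I would treat the spatial bilinear term $\mathcal{E}(h_{m}\ast\tilde u,\psi^{2}\tilde u^{-1})$ exactly as in the proof of Theorem \ref{positivemomentEst}, splitting the double integral into a near-diagonal part on $B\times B$ and an exterior part. An algebraic inequality analogous to Lemma \ref{FundamentalIdentity4} (formally the $q\to 1$ limit of the ``positive moments'' identity) should give a lower bound
$$-\mathcal{E}(\tilde u,\psi^{2}\tilde u^{-1})\geq c_{1}\!\!\int_{B}\!\!\int_{B}\psi(x)\psi(y)\bigl(\log\tilde u(s,x)-\log\tilde u(s,y)\bigr)^{2}k(x,y)\,dxdy-C(\delta,\Lambda)r^{-2\beta}\mu_{n}(B),$$
where the tail estimate (\ref{Assumption1}) and the Lipschitz bound on $\psi$ control the error. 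Combining the two contributions, sending $m\to\infty$, and convolving with $g_{\alpha}$ (as was done to pass from (\ref{infuFormula6}) to (\ref{infuFormula9})) will produce a pointwise-in-time inequality
$$\int_{B}\psi^{2}(-\log\tilde u)\,dx+c_{1}\bigl(g_{\alpha}\ast\mathcal{J}[\log\tilde u]\bigr)(s)\leq C(\delta,\Lambda,\alpha)\,r^{2\beta/\alpha}\mu_{n}(B)\cdot s^{\alpha-1}+(\text{terms at }s=0),$$
where $\mathcal{J}$ denotes the $H_{e}^{\beta}$-seminorm integrated against $\psi^{2}$.

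The constant $c=c(u)$ in the statement is then chosen as a suitable weighted time-average of $\log u$ taken over a slice in the transition region $s\approx\eta\tau r^{2\beta/\alpha}$; subtracting this constant and using the coercivity (\ref{Assumption2}) together with a fractional Poincar\'e inequality on $\delta B$ converts the seminorm control of $\log u$ into oscillation control of $\log u-c$. From this point one obtains (\ref{logTheorem1}) and (\ref{logTheorem2}) by Chebyshev's inequality applied on the cylinders $K_{-}$ and $K_{+}$ respectively, after rescaling to the unit scale ($r=1$) via the scaling property established earlier.

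The main obstacle, as I see it, is producing a \emph{single} constant $c$ that governs the deviations on both $K_{-}$ and $K_{+}$. In the classical local parabolic setting this is an immediate consequence of the pointwise differential inequality and the ordinary chain rule; in our setting, the nonlocal memory encoded by the kernel $g_{1-\alpha}$ and the absence of a classical chain rule force one to work with the convex inequality above and its conjugate (applied to $1/u$), and to carefully track how the ``past'' values of $\log u$ enter through the convolution. I expect this bookkeeping — and in particular verifying that the remainder $\mathcal{R}_{m}$ does not degrade the final weak-type bounds after sending $m\to\infty$ — to be the most technically demanding portion, paralleling the delicate limit arguments already carried out in (\ref{posiTheo14})--(\ref{posiTheo18}).
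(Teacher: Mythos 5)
Your first half tracks the paper's own argument closely: the paper likewise tests with $\varphi=\psi^{2}\tilde u^{-1}$, bounds the bilinear term from below by a weighted squared difference of logarithms at the price of an error of size $\mathcal{E}(\psi,\psi)\leq C\mu_{n}(B)r^{-2\beta}$ (this is exactly Lemma \ref{LowerEstimateLog}, so your plan to rederive it as a $q\to1$ limit of Lemma \ref{FundamentalIdentity4} is a harmless variant), and then applies the weighted Poincar\'e inequality of Lemma \ref{WeightedPoincareInequality} to pass to the oscillation $(w-W(t))^{2}$, with $w=\log(\tilde u/\psi)$ and $W(t)$ its $\psi^{2}$-weighted spatial mean. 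Up to this point your sketch and the paper coincide.

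The genuine gap is in your final step, which is precisely the part the paper does not reprove but imports wholesale from Theorem 3.3 of \cite{zacher2010weak}. Choosing $c(u)$ as a weighted time-average near the transition time and then applying Chebyshev to the $L^{2}$-in-space-time control of $(w-W)^{2}$ cannot yield (\ref{logTheorem1})--(\ref{logTheorem2}): plain Chebyshev gives a $\lambda^{-2}$ bound multiplied by the time-integrated dissipation, and after integrating the differential inequality that dissipation is only controlled by boundary values of $W$ (equivalently, by the total variation of $W$), which on the relevant level sets is itself of order $\lambda$; so the estimate does not close uniformly in $\lambda$, and it says nothing about the deviation of $W(t)$ itself from $c$ on $K_{-}$ and $K_{+}$. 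What is actually needed is the Bombieri--Giusti/Moser level-set argument in its time-fractional form: one must work with the fractional differential inequality satisfied by $W(t)$ (or by truncations $(w-W-\lambda)^{+}$ at each level $\lambda$), exploit the sign of the history term coming from the nonincreasing kernel $g_{1-\alpha,m}$ to obtain a one-sided ``monotonicity up to drift'' for $W$, tie the single constant $c(u)$ to the value of $W$ (plus its memory correction) at the transition time $t_{0}+\eta\tau r^{2\beta/\alpha}$, and then treat $K_{-}$ and $K_{+}$ separately using this monotonicity in opposite directions. Your convolved inequality also loses this structure (and its right-hand side should grow like $s^{\alpha}r^{-2\beta}\mu_{n}(B)$, not $s^{\alpha-1}$). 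You correctly identify this bookkeeping as the hard part, but the proposal does not supply it, whereas it is the entire content of the cited Zacher computation that the paper's proof relies on.
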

\begin{proof}
Without loss of generality, we may assume $t_{0} = 0$. In fact, if $t_{0} > 0$, we shift the time as $t \rightarrow t-t_{0}$,
thereby obtaining an inequality of the same type on the time-interval $J := [0,\tau r^{2\beta/\alpha}]$.
Observe that the property $g_{1-\alpha}*u \in C([0,t_{0}+\tau r^{2\beta/\alpha}];L^{2}(B))$ implies
$g_{1-\alpha}*\tilde{u} \in C(J;L^{2}(B))$ for the shifted function $\tilde{u}(t,x) = u(t+t_{0},x)$.
Hence, we have
\begin{align}\label{Log1}
\int_{B}\varphi\partial_{t}\left( g_{1-\alpha,m}*\tilde{u} \right)dx + \mathcal{E}(h_{m}*\tilde{u},\varphi) \geq 0,
\quad \text{a.e. }t\in J, \, m\in \mathbb{N},
\end{align}
for any nonnegative test function $\varphi \in H_{e}^{1}(B)$.

For $t \in J$, we choose the test function $\varphi = \psi^{2}\tilde{u}^{-1}$ with $\psi \in C_{0}^{1}(B)$ such that $\supp\psi \subset B$,
$\psi = 1$ in $\delta B$, $0\leq\psi\leq 1$, $|D\psi|\leq 2/((1-\delta)r)$. We have
\begin{align}\label{Log2}
\begin{split}
-\int_{B}\psi^{2}\tilde{u}^{-1}\partial_{t}(g_{1-\alpha,m}*\tilde{u})dx + \mathcal{E}(\tilde{u},-\psi^{2}\tilde{u}^{-1}) \leq R_{m}(t),
\end{split}
\end{align}
where
\begin{align*}
R_{m}(t) := \mathcal{E}(h_{m}*\tilde{u},\psi^{2}\tilde{u}^{-1}) - \mathcal{E}(\tilde{u},\psi^{2}\tilde{u}^{-1}).
\end{align*}
Using (\ref{Assumption1}) and properties of $\psi$, there holds
$\mathcal{E}(\psi,\psi) \leq C_{1}\mu_{n}(B)/r^{2\beta} < \infty$ for some constant $C_{1} = C_{1}(n,\beta_{0},\Lambda,\delta)$.
Denote $w(t,x) = \log(\tilde{u}(t,x)/\psi(x))$. Now we apply Lemma \ref{LowerEstimateLog} and Lemma \ref{WeightedPoincareInequality} listed in Appendix to the second term of (\ref{Log2}).
We obtain
\begin{align}\label{Log3}
\begin{split}
-\int_{B}\psi^{2}\tilde{u}^{-1}\partial_{t}(g_{1-\alpha,m}*\tilde{u})dx + \frac{c_{2}}{r^{2\beta}}\int_{B}(w-W)^{2}\psi^{2}dx \leq
\frac{C_{1}\mu_{n}(B)}{r^{2\beta}} + R_{m}(t),
\end{split}
\end{align}
where
\begin{align*}
W(t) := \frac{\int_{B}w(t,x)\psi^{2}(x)dx}{\int_{B}\psi^{2}(x)dx},
\end{align*}
for a.e. $t\in J$.
Here, the factor $r^{2\beta}$ in the second term of (\ref{Log3}) comes from a simple scaling analysis.
In addition, from (\ref{Log3}), we infer that
\begin{align}\label{Log4}
\begin{split}
\frac{-\int_{B}\psi^{2}\tilde{u}^{-1}\partial_{t}(g_{1-\alpha,m}*\tilde{u})dx}{\int_{B}\psi^{2}(x)dx} + \frac{c_{2}}{r^{2\beta}\mu_{n}(B)}\int_{B}(w-W)^{2}\psi^{2}dx \leq
\frac{C_{2}}{r^{2\beta}} + S_{m}(t),
\end{split}
\end{align}
where $C_{2}$ depends on $n,\beta_{0},\Lambda,\delta$ and $S_{m}(t) := R_{m}(t)/\int_{B}\psi^{2}dx$.
Now, we could use same calculations as in the proof of Theorem 3.3 in \cite{zacher2010weak} to complete our proof.
And for concisely, we omit the details.
\end{proof}

\subsection{Proof of the Harnack's inequality}

In this section, our aim is to prove Theorem \ref{HarnackInequalityTheorem}.
With Theorem \ref{infuTheorem}, Theorem \ref{positivemomentEst} and Theorem \ref{estimateLogU}, the proof of
Theorem \ref{HarnackInequalityTheorem} is conventional. However, for the completeness of this work,
we provide a sketch of the proof in the following.

Without loss of generality, we assume that $u \geq \epsilon$ for some $\epsilon > 0$; otherwise
replace $u$ by $u+\epsilon$, which is a weak supersolution of (\ref{TimeSpaceEquation1}) with $u_{0} + \epsilon$
instead of $u_{0}$, and eventually let $\epsilon \rightarrow 0^{+}$.

For $0 < \sigma \leq 1$, we set $U_{\sigma} = (t_{0}+(2-\sigma)\tau r^{2\beta/\alpha}, t_{0}+2\tau r^{2\beta/\alpha})\times\sigma B$
and $U_{\sigma}' = (t_{0},t_{0}+\sigma\tau r^{2\beta/\alpha}) \times \sigma B$.
It is easy to find that $Q_{-}(t_{0},x_{0},r) = U_{\delta}'$ and $Q_{+}(t_{0},x_{0},r) = U_{\delta}$.

Applying Theorem \ref{infuTheorem}, we have
\begin{align*}
\esssup_{U_{\sigma'}}u^{-1} \leq
\left( \frac{C\mu_{n+1}(U_{1})^{-1}}{(\sigma-\sigma')^{\tau_{0}}} \right)^{1/\gamma}\|u^{-1}\|_{L^{\gamma}(U_{\sigma})},
\quad \delta\leq\sigma' < \sigma\leq 1,\,\,\gamma\in(0,1].
\end{align*}
Here $C = C(\Lambda,\delta,\tau,\beta_{0},\alpha,n)$ and $\tau_{0} = \tau_{0}(n,\beta)$. This implies that
the first hypothesis of Lemma \ref{logAppendix} is satisfied by any positive constant multiple
of $u^{-1}$ with $\xi_{0} = \infty$.

Consider $f_{1} = u^{-1}e^{c(u)}$ where $c(u)$ is the constant from Theorem \ref{estimateLogU} with $K_{-} = U_{1}'$
and $K_{+} = U_{1}$. Because $\log f_{1} = c(u) - \log u$, we conclude from Theorem \ref{estimateLogU} that
\begin{align*}
\mu_{n+1}(\{ (t,x)\in U_{1} \,:\, \log f_{1}(t,x) > \lambda \}) \leq M \mu_{n+1}(U_{1})\lambda^{-1}, \quad
\lambda > 0,
\end{align*}
where $M = M(\Lambda,\delta,\tau,\eta,\alpha,\beta_{0},n)$. Now, we could use Lemma \ref{logAppendix}
with $\xi_{0} = \infty$ to $f_{1}$ and the family $U_{\sigma}$ to obtain
\begin{align*}
\esssup_{U_{\delta}}f_{1} \leq M_{1}
\end{align*}
with $M_{1} = M_{1}(\Lambda,\delta,\tau,\eta,\alpha,\beta_{0},n)$.
Changing back to the variable $u$, we find that
\begin{align}\label{Final1}
e^{c(u)} \leq M_{1}\essinf_{U_{\delta}}u.
\end{align}
On the other hand, Theorem \ref{positivemomentEst} yields
\begin{align*}
\|u\|_{L^{1}(U_{\sigma'}')} \leq \left( \frac{C\mu_{n+1}(U_{1}')^{-1}}{(\sigma-\sigma')^{\tau_{1}}} \right)^{1/\gamma - 1}
\|u\|_{L^{\gamma}(U_{\sigma}')}, \quad \delta \leq \sigma'<\sigma\leq 1,\,\, 0<\gamma\leq \kappa^{-1}.
\end{align*}
Here $C = C(\Lambda,\delta,\tau,\alpha,\beta_{0},n)$ and $\tau_{1} = \tau_{1}(\beta,n)$.
Choosing $\xi_{0} = 1$ and $\eta = \kappa^{-1}$ in Lemma \ref{logAppendix} and
$f_{2} = ue^{-c(u)}$ with $c(u)$ from above, we have $\log f_{2} = \log u - c(u)$, hence, Theorem \ref{estimateLogU}
gives
\begin{align*}
\mu_{n+1}(\{ (t,x)\in U_{1}' \,;\, \log f_{2} > \lambda \}) \leq M\mu_{n+1}(U_{1}')\lambda^{-1}, \quad \lambda>0,
\end{align*}
where $M$ is as above. Applying Lemma \ref{logAppendix}, this time to the function $f_{2}$ and the sets $U_{\sigma}'$
and with $\xi_{0} = 1$ and $\eta = \kappa^{-1}$, we obtain
\begin{align*}
\|f_{2}\|_{L^{1}(U_{\delta}')} \leq M_{2}\mu_{n+1}(U_{1}'),
\end{align*}
where $M_{2} = M_{2}(\Lambda,\delta,\tau,\eta,\alpha,\beta_{0},n)$.
Changing back to the variable $u$, we find that
\begin{align}\label{Final2}
\mu_{n+1}(U_{1}')^{-1}\|u\|_{L^{1}(U_{\delta}')} \leq M_{2}e^{c(u)}.
\end{align}
Finally, we combine (\ref{Final1}) and (\ref{Final2}) to obtain
\begin{align*}
\mu_{n+1}(U_{1}')^{-1}\|u\|_{L^{1}(U_{\delta}')} \leq M_{1}M_{2}\essinf_{U_{\delta}}u,
\end{align*}
which proves Theorem \ref{HarnackInequalityTheorem}.


\section{Maximum principles}\label{MaximumPrincipleSection}

In this section, we firstly state the following weak maximum principle.
\begin{theorem}\label{WeakMaximumParabolic1}
Let $\Omega\subset\mathbb{R}^{n}$, $\alpha \in (0,1)$, $T > 0$, $k \in \mathcal{R}(\beta_{0},\Lambda)$ with $\beta_{0}\in(n/4,1)$ and $\Lambda \geq \max\{1,\beta_{0}^{-1}\}$. Assume $u$ be a weak supersolution of problem (\ref{TimeSpaceEquation1})
with $u_{0} \geq 0$ a.e. in $\Omega$ and $f \geq 0$ a.e. in $\Omega \times [0,T]$. Then $u \geq 0$ a.e. in $\mathbb{R}^{n}\times[0,T]$.
\end{theorem}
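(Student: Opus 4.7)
The strategy is to test the supersolution inequality against $u^-(\cdot,t):=\max\{-u(\cdot,t),0\}$ (which is non-negative and lies in $H_e^\beta(\Omega)\cap L^\infty(\mathbb{R}^n)$, since $y\mapsto y^-$ is $1$-Lipschitz and $u$ vanishes outside $\Omega$), and to combine three sign facts: (i) the nonlocal bilinear form $\mathcal{E}$ dissipates the negative part, (ii) a fractional chain rule bounds the Caputo term below by $\partial_t^\alpha\tfrac{1}{2}(u^-)^2$, and (iii) $u_0\geq 0$ forces $(u_0)^-\equiv 0$. From these one deduces $\partial_t^\alpha\Psi\leq 0$ for $\Psi(t):=\tfrac{1}{2}\|u^-(\cdot,t)\|_{L^2(\Omega)}^2$ with $\Psi(0)=0$, so $\Psi\equiv 0$ and hence $u\geq 0$.

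Concretely, inserting $\psi(x)=u^-(x,t)$ into the equivalent weak formulation \eqref{WeakFormulation2} from Lemma \ref{WeakEquivalent1} and using $f\geq 0$ together with $h_m\geq 0$ to drop the right-hand side yields
\begin{equation*}
\int_\Omega u^-\,\partial_t\bigl(g_{1-\alpha,m}*(u-u_0)\bigr)\,dx + \mathcal{E}(h_m*u,u^-) \geq 0 \qquad \text{for a.e. } t\in(0,T),\ m\in\mathbb{N}.
\end{equation*}
A case check on the four sign combinations of $(a,b)=(v(x),v(y))$ proves the pointwise inequality $(a-b)(a^- - b^-)\leq -(a^- - b^-)^2$, so that
\begin{equation*}
\mathcal{E}(v,v^-) \leq -\tfrac{1}{2}\int_{\mathbb{R}^n}\int_{\mathbb{R}^n}(v^-(x)-v^-(y))^2 k(x,y)\,dx\,dy \leq 0 \qquad \forall\, v\in H_e^\beta(\Omega).
\end{equation*}
For the time term, setting $\phi(y):=\tfrac{1}{2}(y^-)^2$ (convex, with Lipschitz derivative $\phi'(y)=-y^-$), I would regularize $\phi$ by $\phi_\varepsilon\in C^2(\mathbb{R})$ via standard mollification (preserving convexity), apply the fundamental chain-rule identity \eqref{FundamentalIdentity} to $\phi_\varepsilon$, and pass $\varepsilon\to 0^+$ using the $L^\infty$ bound on $u$ from Theorem \ref{WeakSolutionTheorem}. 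Together with $\phi(u_0)\equiv 0$, this produces
\begin{equation*}
u^-\,\partial_t\bigl(g_{1-\alpha,m}*(u-u_0)\bigr) \leq -\partial_t\bigl(g_{1-\alpha,m}*\tfrac{1}{2}(u^-)^2\bigr) + r_m \qquad \text{pointwise in } x,
\end{equation*}
with $r_m\to 0$ in $L^1((0,T)\times\Omega)$ as $m\to\infty$.

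Integrating over $\Omega$, combining the two bounds, and letting $m\to\infty$ (using $h_m*u\to u$ in $L^2([0,T];H_e^\beta(\Omega))$ and $g_{1-\alpha,m}\to g_{1-\alpha}$ in $L^1$ from Section \ref{2YoshidaSection}) gives $\partial_t^\alpha\Psi(t)\leq 0$ a.e. on $(0,T)$. Since $u(\cdot,0)=u_0\geq 0$ implies $\Psi(0)=0$, convolving this inequality with the non-negative kernel $g_\alpha$ and using $g_\alpha*\partial_t^\alpha\Psi=\Psi-\Psi(0)=\Psi$ yields $\Psi\leq 0$; combined with $\Psi\geq 0$, this forces $\Psi\equiv 0$, so $u^-\equiv 0$ in $\Omega\times[0,T]$, and the exterior condition extends this to $\mathbb{R}^n\times[0,T]$. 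The main technical obstacle is the chain-rule step: \eqref{FundamentalIdentity} is formulated for smooth convex functions on positive arguments, so one must carefully justify the mollification and the order of the limits $m\to\infty$ and $\varepsilon\to 0^+$; this ultimately reduces to dominated convergence using the uniform $L^\infty$ bound on $u$ and the convergence properties of the Yosida regularization in Section \ref{2YoshidaSection}.
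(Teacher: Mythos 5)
Your proposal is correct and follows essentially the route the paper takes: test with the negative part $u^{-}$ and use that the nonlocal form is dissipative on it (your pointwise inequality $(a-b)(a^{-}-b^{-})\leq -(a^{-}-b^{-})^{2}$ is just the paper's decomposition $\mathcal{E}(u,u^{-})=\mathcal{E}(u^{+},u^{-})-\mathcal{E}(u^{-},u^{-})\leq 0$ written pointwise). The difference is one of completeness: the paper only proves this sign estimate and then cites Theorem 4.2 of \cite{JiaLi2016WeakMaximum} for the time-fractional energy step, whereas you carry that step out via the Yosida approximation, which is exactly what the cited argument does. Two remarks that simplify and tighten your sketch. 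First, the mollification of $\phi(y)=\tfrac12(y^{-})^{2}$ is unnecessary: inequality (\ref{AppendexTimeFractional3}) in the Appendix already gives $u^{-}\partial_{t}(g_{1-\alpha,m}*u)\leq-\tfrac12\partial_{t}(g_{1-\alpha,m}*(u^{-})^{2})$ with no remainder $r_{m}$; the initial datum is then handled by writing $\partial_{t}(g_{1-\alpha,m}*(u-u_{0}))=\partial_{t}(g_{1-\alpha,m}*u)-u_{0}\,g_{1-\alpha,m}(t)$ and discarding $-u^{-}u_{0}\,g_{1-\alpha,m}(t)\leq 0$ (here the nonnegativity of $g_{1-\alpha,m}$ and $u_{0}\geq 0$ enter, not merely $\phi(u_{0})=0$). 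Second, at level $m$ the bilinear term is $\mathcal{E}(h_{m}*u,u^{-})$, not $\mathcal{E}(u,u^{-})$, so the sign estimate cannot be applied before the limit; the cleanest order of operations is to convolve the level-$m$ inequality with $g_{\alpha}$ first, using $g_{\alpha}*\partial_{t}(g_{1-\alpha,m}*\Psi)=h_{m}*\Psi$, which gives $h_{m}*\Psi\leq g_{\alpha}*\mathcal{E}(h_{m}*u,u^{-})$, and only then send $m\to\infty$ via Lemma \ref{ConvergenceAppendix} and the properties in Section \ref{2YoshidaSection} to conclude $\Psi\leq g_{\alpha}*\mathcal{E}(u,u^{-})\leq 0$; this avoids the delicate justification of $g_{\alpha}*\partial_{t}^{\alpha}\Psi=\Psi$ for the merely distributional inequality obtained after the limit. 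With these adjustments your argument is a complete, self-contained version of the proof the paper delegates to the reference.
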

\begin{proof}
Denote $u^{-} = \max\{-u,0\}$ and $u^{+} = \max\{u,0\}$ and notice that
\begin{align*}
&\quad\quad\quad\quad
\int_{0}^{T}\mathcal{E}(u,u^{-})dt = \int_{0}^{T}\mathcal{E}(u^{+},u^{-})dt - \int_{0}^{T}\mathcal{E}(u^{-},u^{-})dt, \\
&\int_{0}^{T}\mathcal{E}(u^{-},u^{-})dt = \int_{0}^{T}\int_{\mathbb{R}^{n}}\int_{\mathbb{R}^{n}}
(u^{-}(x,t) - u^{-}(y,t))^{2}k(x,y)dxdydt > 0,
\end{align*}
then we have
\begin{align*}
\int_{0}^{T}\mathcal{E}(u,u^{-})dt < \int_{0}^{T}\mathcal{E}(u^{+},u^{-})dt.
\end{align*}
Noticing that $(u^{+}(x,t) - u^{+}(y,t))(u^{-}(x,t) - u^{-}(y,t)) \leq 0$, we obtain
\begin{align}\label{WeakMaxProof2}
\int_{0}^{T}\mathcal{E}(u,u^{-})dt < \int_{0}^{T}\mathcal{E}(u^{+},u^{-})dt \leq 0.
\end{align}
With these estimates, we can follow the proof of Theorem 4.2 in \cite{JiaLi2016WeakMaximum} to
obtain the required result.
\end{proof}

Then we show the following strong maximum principle which may has many important applications.
\begin{theorem}\label{StrongMaxiPrinTheorem}
Let $\Omega\subset\mathbb{R}^{n}$, $\alpha \in (0,1)$, $T > 0$, $k \in \mathcal{R}(\beta_{0},\Lambda)$
with $\beta_{0}\in(n/4,1)$ and $\Lambda \geq \max\{1,\beta_{0}^{-1}\}$.
Let further $\eta > 0$ and $\delta \in (0,1)$ be fixed and $f = 0$ in (\ref{TimeSpaceEquation1}).
Take $u$ be a weak solution of (\ref{TimeSpaceEquation1})
in $Q_{T}$ and assume that $-\infty < \essinf_{Q_{T}}u$ and that
$\essinf_{Q_{T}}u \leq \essinf_{\Omega}u_{0}$. Then, if for some cylinder
$Q = (t_{0},t_{0}+\tau r^{2\beta/\alpha})\times B(x_{0},r)\subset Q_{T}$ with $t_{0},\tau,r > 0$
and $\overline{B(x_{0},r)}\subset\Omega$, we have
\begin{align}\label{maximumFor}
\essinf_{Q}u = \essinf_{Q_{T}}u,
\end{align}
the function is constant on $(0,t_{0})\times\Omega$.
\end{theorem}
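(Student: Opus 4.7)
My plan is to reduce to a nonnegative supersolution and then iteratively apply the weak Harnack inequality (Theorem \ref{HarnackInequalityTheorem}) to propagate the minimum backward in time and across space. Let $m := \essinf_{Q_{T}} u$, which is finite by hypothesis, and set $v := u - m$. The assumption $m \leq \essinf_{\Omega} u_{0}$ gives $v_{0} := u_{0} - m \geq 0$ a.e.\ in $\Omega$, and by construction $v \geq 0$ a.e.\ in $Q_{T}$. The algebraic observation that makes this reduction work is that subtracting a constant leaves the equation invariant: for any test function $\phi \in H_{e}^{1,\beta}(Q_{T})$, the bilinear form satisfies $\mathcal{E}(v,\phi) = \mathcal{E}(u,\phi) - m\,\mathcal{E}(1,\phi) = \mathcal{E}(u,\phi)$ because $\mathcal{E}(1,\phi)=0$, and clearly $g_{1-\alpha}*(v - v_{0}) = g_{1-\alpha}*(u - u_{0})$. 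Hence $v$ is a nonnegative local weak supersolution of \eqref{TimeSpaceEquation1} with $f = 0$ and initial datum $v_{0}$, and the hypothesis \eqref{maximumFor} reads $\essinf_{Q} v = 0$.

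Next, I apply Theorem \ref{HarnackInequalityTheorem} in a backward-propagation mode. Fix any point $(\bar{t},\bar{x})$ with $\bar{t} < t_{0}$ and $\bar{x} \in \Omega$. Choose Harnack parameters $t_{0}', x_{0}', r', \tau', \delta$ and enlargement $\eta > 1$ so that the future cylinder $Q_{+}(t_{0}', x_{0}', r')$ sits inside $Q$, the past cylinder $Q_{-}(t_{0}', x_{0}', r')$ contains $(\bar{t},\bar{x})$, and the enlarged parabolic neighborhood required by Theorem \ref{HarnackInequalityTheorem} lies in $Q_{T}$. Since $\essinf_{Q_{+}(t_{0}',x_{0}',r')} v \leq \essinf_{Q} v = 0$, the weak Harnack inequality gives
\begin{equation*}
\frac{1}{\mu_{n+1}(Q_{-}(t_{0}',x_{0}',r'))} \int_{Q_{-}(t_{0}',x_{0}',r')} v \, d\mu_{n+1} \leq C \cdot 0 = 0,
\end{equation*}
and combined with $v \geq 0$ this forces $v = 0$ a.e.\ on $Q_{-}(t_{0}', x_{0}', r')$, hence in a neighborhood of $(\bar{t}, \bar{x})$.

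To upgrade this local statement to all of $(0,t_{0}) \times \Omega$, I set $S := \{(s,x) \in (0,t_{0}) \times \Omega : v = 0 \text{ a.e.\ in a neighborhood of } (s,x)\}$. By the previous step $S$ is nonempty, and by construction $S$ is open. To show $S$ is relatively closed, I take a boundary point $(s_{*}, x_{*})$ of $S$ in $(0,t_{0}) \times \Omega$ and exhibit a Harnack configuration whose past part $Q_{-}$ is a neighborhood of $(s_{*}, x_{*})$ while whose future part $Q_{+}$ meets the open set $S$ (where $v = 0$); the Harnack inequality then again yields $v = 0$ a.e.\ on $Q_{-}$. Since $(0,t_{0}) \times \Omega$ is connected, $S = (0, t_{0}) \times \Omega$, i.e., $u \equiv m$ a.e.\ on $(0, t_{0}) \times \Omega$, as claimed.

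The main technical obstacle is the chaining step: for any prescribed target point one must simultaneously secure geometric containment of $Q_{+}$ inside an already-known zero set, cover a neighborhood of the target by $Q_{-}$, and still fit the enlarged parabolic neighborhood from Theorem \ref{HarnackInequalityTheorem} inside $Q_{T}$. The spatial propagation is delicate because the balls in the Harnack inequality carry a fixed shrinking factor $\delta$, so reaching points of $\Omega$ far from $x_{0}$ requires chaining overlapping balls of controlled radii, all the while adjusting the time parameters so that the temporal ordering $Q_{-}$ before $Q_{+}$ is preserved along every link of the chain. This bookkeeping is standard for parabolic Harnack-based maximum principles, but it is the place where the fractional scaling $t \sim r^{2\beta/\alpha}$ enters nontrivially.
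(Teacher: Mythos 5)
Your overall architecture is the same as the paper's: subtract $m=\essinf_{Q_{T}}u$, note that $v=u-m$ is a nonnegative weak supersolution of (\ref{TimeSpaceEquation1}) with $f=0$ and $v_{0}=u_{0}-m\geq 0$, apply the weak Harnack inequality of Theorem \ref{HarnackInequalityTheorem} to force $v$ to vanish at earlier times, and finish by chaining. However, your initialization step as written fails. You choose a Harnack configuration whose future box $Q_{+}(t_{0}',x_{0}',r')$ ``sits inside $Q$'' and then assert $\essinf_{Q_{+}(t_{0}',x_{0}',r')}v\leq\essinf_{Q}v=0$. For nested sets the inequality runs the other way: shrinking the domain can only increase the essential infimum, so $Q_{+}\subset Q$ gives $\essinf_{Q_{+}}v\geq\essinf_{Q}v=0$ and nothing more. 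Hypothesis (\ref{maximumFor}) tells you only that the infimum over the whole cylinder $Q$ vanishes; the small values of $v$ could be concentrated in a part of $Q$ disjoint from your chosen $Q_{+}$. Consequently the right-hand side $C\,\essinf_{Q_{+}}v$ of your Harnack application is not known to be zero, and the conclusion $v=0$ a.e.\ on $Q_{-}$ does not follow.

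The repair is to reverse the containment, which is what the paper's displayed estimate implicitly does by putting $\essinf_{Q}(u-M)$ on the right: choose the configuration so that $Q\subseteq Q_{+}(t_{0}',x_{0},r')$, whence $\essinf_{Q_{+}}v\leq\essinf_{Q}v=0$ is legitimate and, together with $v\geq 0$, kills the mean of $v$ over $Q_{-}$. Spatially this requires $\delta r'\geq r$ together with $B(x_{0},\eta r')\subset\Omega$ for some $\eta>1$, which is exactly where the hypothesis $\overline{B(x_{0},r)}\subset\Omega$ enters; temporally one tunes $t_{0}'$ and $\tau'$ so that the future window covers $(t_{0},t_{0}+\tau r^{2\beta/\alpha})$ while the past window is any prescribed interval below $t_{0}$, yielding $u=M$ a.e.\ on $(0,t_{0})\times B(x_{0},r)$. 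Your subsequent open--closed chaining argument does not suffer from this defect: there the future box meets the open set $S$, on which $v=0$ a.e., in a set of positive measure, and that genuinely forces $\essinf_{Q_{+}}v=0$. (A minor overstatement in your second paragraph: a single application cannot place $Q_{-}$ around an arbitrary $\bar{x}\in\Omega$ while keeping $Q_{+}$ inside $Q$, since $Q_{-}$ and $Q_{+}$ share the same spatial ball; but you correctly delegate spatial propagation to the chaining, which is also where the paper stops with ``a chaining argument.'')
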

\begin{proof}
Let $M = \essinf_{Q_{T}}u$. Then $v := u-M$ is a nonnegative weak solution of (\ref{TimeSpaceEquation1}) with
$u_{0}$ replaced by $v_{0} := u_{0} - M \geq 0$. For any $0 \leq t_{1} < t_{1}+\eta r^{2\beta/\alpha} < t_{0}$
the weak Harnack inequality applied to $v$ yields the following estimate
\begin{align*}
r^{-(n+2\beta/\alpha)}\int_{t_{1}}^{t_{1}+\eta r^{2\beta/\alpha}}
\int_{B(x_{0},r)}(u-M)dxdt \leq C \essinf_{Q}(u-M) = 0.
\end{align*}
This implies that $u = M$ a.e. in $(0,t_{0})\times B(x_{0},r)$. As in the classical parabolic case \cite{lieberman1996second},
the assertion follows by a chaining argument.
\end{proof}


\section{An inverse source problem}\label{InverseSection}

In this section, we focus on an inverse source problem for (\ref{TimeSpaceEquation1}) under the assumption that the inhomogeneous term $f$ takes the form of separation of variables. In addition, we add more assumptions on the kernel $k(\cdot,\cdot)$ appeared in the
definition of space-nonlocal operator $L$. Specifically speaking, we assume
\begin{align}\label{PointwiseAss}
k(x,y) = \frac{a((x-y)/|x-y|)}{|x-y|^{n+2\beta}}.
\end{align}
Here $a\in L^{1}(\mathcal{S}^{n-1})$ satisfying $a(\theta) = a(-\theta)$,
\begin{align}\label{NonDegenrate}
0 < \Lambda^{-1} \leq a(\theta) \leq \Lambda,
\end{align}
and
\begin{align}\label{AssA}
0 < \Lambda^{-1} \leq \inf_{\nu\in\mathcal{S}^{n-1}}\int_{\mathcal{S}^{n-1}}|\nu\cdot\theta|^{2\beta}a(\theta)d\theta
\end{align}
for $\theta \in \mathcal{S}^{n-1}$ with $\Lambda$ are some positive constants (may not be the same as in (\ref{Assumption1}) and (\ref{Assumption2})).
For notational convenience, denote $\mathcal{R}^{p}(\beta,\Lambda)$ as the space of all kernels $k$ satisfying the above conditions.

\begin{remark}\label{jieshiCon}
In this section, we will always assume $k \in \mathcal{R}^{p}(\beta,\Lambda)$. The reason is that under the
weaker assumptions $k \in \mathcal{R}(\beta_{0},\Lambda)$, we could not obtain enough regularity for the solution
by some conventional methods. As is well known, regularity issues under weak assumptions on kernels are important research
subjects and highly nontrivial. Because this is not the main point of this paper, we will prove our results
when $k \in \mathcal{R}^{p}(\beta,\Lambda)$. And once higher regularity properties for the solutions are available
when $k \in \mathcal{R}(\beta_{0},\Lambda)$,
all results in this section may be adapted to this more general setting.
\end{remark}

\begin{problem}\label{ProblemInverse}
Assume $n = 2 \text{ or }3$, $\Omega \subset \mathbb{R}^{n}$ is a bounded Lipschitz domain.
Let $\alpha \in (0,1)$, $\beta_{0}\in[n/4,1)$, $\beta\in(\beta_{0},1)$, $\Lambda > 1$ and $k \in \mathcal{R}^{p}(\Lambda,\beta_{0})$.
Let $x_{0} \in \Omega$ and $T > 0$ be arbitrarily given, and $u$ be the solution to (\ref{TimeSpaceEquation1}) with $u_{0} = 0$ and $f(x,t) = \rho(t)g(x)$.
Provided that $g(\cdot)$ is known, determine $\rho(t)\,(0\leq t\leq T)$ by the single point observation data $u(x_{0},t)\,(0\leq t\leq T)$.
\end{problem}

Similar problems are studied in \cite{liu2015strong} for a time-fractional and space-integer order diffusion equation.
As in \cite{liu2015strong}, the spatial component $g$ simulates e.g. a source of contaminants which may be dangerous.
Usually, $g$ is limited to a small region given by $\supp g\subset\Omega$. We are required to determine the time-dependent
magnitude $\rho$ by the pointwise data $u(x_{0},t)\,(0\leq t\leq T)$, where $x_{0} \notin \supp g$ is understood as a monitoring point.
For more work about similar problems, we refer to \cite{cannon1986inverse,saitoh2003reverse,Sakamoto2011426}.


\subsection{Regularity of the solution}\label{RegularitySection}

Let us firstly recall the following lemma proved in \cite{Xavier2014Boundary}.
\begin{lemma}\label{EgienFunLemma}
Let $\Omega$ be a bounded Lipschitz domain, $L$ is the operator defined in (\ref{spatialGeneralDef}) with
kernel $k\in\mathcal{R}^{p}(\beta,\Lambda)$ with $\Lambda > 1$ and $\beta \in (0,1)$. Then, for the following nonlocal elliptic equation
\begin{align}\label{Eigenfun}
\left\{\begin{aligned}
L\phi & = \lambda\phi \quad \text{in } \Omega, \\
\phi & = 0 \quad\,\,\,\, \text{in }\mathbb{R}^{n}\backslash\Omega.
\end{aligned}\right.
\end{align}
We have
\begin{enumerate}
  \item Equation (\ref{Eigenfun}) has a set of eigenfunctions $\phi_{k}$ forming a Hilbert basis of $L^{2}(\Omega)$.
  \item If $\{\lambda_{k}\}_{k\in\mathbb{N}}$ is the sequence of eigenvalues associated to the eigenfunctions of $L$ in
  increasing order, then
  \begin{align*}
  \lim_{k\rightarrow\infty}\lambda_{k}k^{-\frac{2\beta}{n}} = C_{0},
  \end{align*}
  for some constant $C_{0}$.
\end{enumerate}
\end{lemma}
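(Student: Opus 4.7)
The plan is to treat $L$ as the self-adjoint operator generated by the quadratic form $\mathcal{E}(\cdot,\cdot)$ from (\ref{2WeakSolutionBilinear}) on $H_{e}^{\beta}(\Omega)$, and to combine standard spectral theory for compact self-adjoint operators with min-max comparison against the fractional Laplacian. This is essentially the approach followed in \cite{Xavier2014Boundary}, to which I would defer once the main ingredients are in place.

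First, I would verify that $\mathcal{E}$ is symmetric, continuous, and coercive on $H_{e}^{\beta}(\Omega)$. Continuity follows from the pointwise bound $a(\theta)\leq\Lambda$ in (\ref{NonDegenrate}) together with the equivalence (\ref{EquivalenceNorm}), while coercivity follows from the lower bound in (\ref{NonDegenrate}) combined with the fractional Poincar\'e inequality on the bounded Lipschitz domain $\Omega$. Since the embedding $H_{e}^{\beta}(\Omega)\hookrightarrow L^{2}(\Omega)$ is compact, the inverse $T:=L^{-1}:L^{2}(\Omega)\to L^{2}(\Omega)$ produced by the Lax--Milgram theorem is self-adjoint, positive and compact. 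The spectral theorem for compact self-adjoint operators then yields a nondecreasing sequence of eigenvalues $\lambda_{k}=1/\mu_{k}\to\infty$ whose corresponding eigenfunctions $\{\phi_{k}\}_{k\in\mathbb{N}}$ form a Hilbert basis of $L^{2}(\Omega)$; this settles part (1).

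For part (2), I would begin by obtaining two-sided bounds of Weyl type. The hypotheses (\ref{PointwiseAss})--(\ref{NonDegenrate}) on $a$ give
\begin{align*}
c_{1}(-\Delta)^{\beta}\leq L\leq c_{2}(-\Delta)^{\beta}
\end{align*}
as quadratic forms on $H_{e}^{\beta}(\Omega)$, so the min-max characterization
\begin{align*}
\lambda_{k}=\inf_{\dim V=k}\,\sup_{\phi\in V\setminus\{0\}}\frac{\mathcal{E}(\phi,\phi)}{\|\phi\|_{L^{2}(\Omega)}^{2}}
\end{align*}
immediately produces $c_{1}\lambda_{k}^{\Delta}\leq\lambda_{k}\leq c_{2}\lambda_{k}^{\Delta}$, where $\lambda_{k}^{\Delta}$ are the Dirichlet eigenvalues of $(-\Delta)^{\beta}$ on $\Omega$, which satisfy the classical Weyl law $\lambda_{k}^{\Delta}\sim C\,k^{2\beta/n}$. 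This already establishes the correct order $\lambda_{k}\asymp k^{2\beta/n}$.

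The main obstacle, and the step that makes the structural assumption $k\in\mathcal{R}^{p}(\beta,\Lambda)$ indispensable (rather than the weaker $\mathcal{R}(\beta_{0},\Lambda)$, as emphasized in Remark \ref{jieshiCon}), is to promote these two-sided bounds to an exact limit $C_{0}$. For this I would exploit the fact that $L$ is a translation-invariant Fourier multiplier on $\mathbb{R}^{n}$ with positive, degree-$2\beta$ homogeneous principal symbol
\begin{align*}
\psi(\xi)=c_{n,\beta}\int_{\mathcal{S}^{n-1}}|\xi\cdot\theta|^{2\beta}\,a(\theta)\,d\theta,
\end{align*}
which is uniformly elliptic by (\ref{AssA}). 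I would then invoke the Weyl asymptotic formula for Dirichlet eigenvalues of homogeneous elliptic pseudo-differential operators on bounded domains, obtained by studying the short-time trace expansion of $e^{-tL}$ via a parametrix construction and then applying a Karamata-type Tauberian theorem to the counting function $N(\lambda):=\#\{k:\lambda_{k}\leq\lambda\}$. This yields $\lim_{k\to\infty}\lambda_{k}k^{-2\beta/n}=C_{0}$ with $C_{0}$ expressible through an integral of $\psi^{-n/(2\beta)}$ over $\mathcal{S}^{n-1}$ and the Lebesgue measure of $\Omega$. Since all of this is carried out in \cite{Xavier2014Boundary}, the final step is simply to cite that reference.
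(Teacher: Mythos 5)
The paper gives no proof of this lemma at all---it is simply recalled as a result proved in \cite{Xavier2014Boundary}---so your proposal, which reconstructs the standard variational/spectral-theorem argument for part (1) and the heat-trace plus Tauberian route to the exact Weyl asymptotics for part (2) before deferring to that same reference, is consistent with (and more detailed than) what the paper does. Your outline is sound: symmetry, continuity and coercivity of $\mathcal{E}$ via the two-sided bounds on $a$ and the fractional Poincar\'e inequality, compactness of the embedding $H_{e}^{\beta}(\Omega)\hookrightarrow L^{2}(\Omega)$, and the identification of the limit constant through the positively homogeneous symbol of $L$ are exactly the ingredients behind the cited result.
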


Based on the above lemma, we could assume $\{\lambda_{k},\phi_{k}(x)\}_{k = 1}^{\infty}$ as the eigensystem of
the operator $L$. Multiplying equation
$$\partial_{t}^{\alpha}(u-u_{0}) + Lu = f$$
by $\phi_{k}$, denote $u_{k}(t) = (u(\cdot,t),\phi_{k})$, $u_{0,k} = (u_{0},\phi_{k})$ and $f_{k}(t) = (f(\cdot,t),\phi_{k})$, we obtain
\begin{align}\label{OrdDiff}
\partial_{t}^{\alpha}(u_{k}(t)-u_{0,k}) = -\lambda_{k}u_{k}(t) + f_{k}(t), \quad t > 0.
\end{align}
Recalling that the operator $\partial_{t}^{\alpha}(u_{k}(t)-u_{0,k})$ is just the modified Caputo fractional derivative operator
used in \cite{Peng2012786}, and according to Lemma 1 in \cite{Peng2012786}, we know that
\begin{align}\label{SolutionOrdDiff}
u_{k}(t) = E_{\alpha,1}(-\lambda_{k}t^{\alpha})u_{0,k}
+ \int_{0}^{t}(t-s)^{\alpha-1}E_{\alpha,\alpha}(-\lambda_{k}(t-s)^{\alpha})f_{k}(s)ds.
\end{align}
Hence, we may have
\begin{align}\label{SolutionParDiff}
\begin{split}
u(x,t) = & \sum_{k=1}^{\infty}E_{\alpha,1}(-\lambda_{k}t^{\alpha})u_{0,k}\phi_{k}(x)  \\
& + \sum_{k=1}^{\infty}\int_{0}^{t}(t-s)^{\alpha-1}E_{\alpha,\alpha}(-\lambda_{k}(t-s)^{\alpha})f_{k}(s)ds\phi_{k}(x),
\end{split}
\end{align}
in some sense. Actually, we could obtain the following theorem.
\begin{theorem}\label{RegularityTheorem}
Fix $T > 0$, let $n = 2 \text{ or } 3$, $\beta\in(n/4,1)$, $\Lambda > 1$, $k\in\mathcal{R}^{p}(\beta,\Lambda)$, $\alpha \in (0,1)$
and $\Omega$ is a bounded Lipschitz domain.
Concerning the weak solution to (\ref{TimeSpaceEquation1}), we have
\begin{enumerate}
  \item Let $u_{0}\in L^{2}(\Omega)$ and $f = 0$. Then the unique weak solution $u$ belongs to
  $$C([0,T];L^{2}(\Omega))\cap C((0,T];H_{e}^{2\beta}(\Omega)),$$
  which can be represented as
  \begin{align}\label{InTheFor1}
  u(x,t) = \sum_{k=1}^{\infty}E_{\alpha,1}(-\lambda_{k}t^{\alpha})(u_{0},\phi_{k})\phi_{k}(x)
  \end{align}
  in $C([0,T];L^{2}(\Omega))\cap C((0,T];H_{e}^{2\beta}(\Omega))$, where $\{(\lambda_{k},\phi_{k})\}_{k=1}^{\infty}$
  is the eigensystem of $L$. Moreover, there exists a constant $C = C(\Omega,T,\alpha,L) > 0$ such that
  \begin{align}
  & \quad\quad\quad\quad\quad\quad
  \|u(\cdot,t)\|_{L^{2}(\Omega)} \leq C \|u_{0}\|_{L^{2}(\Omega)}, \label{InTheFor2} \\
  & \|u(\cdot,t)\|_{H_{e}^{2\beta}(\Omega)} + \|\partial_{t}^{\alpha}(u(\cdot,t)-u_{0}(\cdot))\|_{L^{2}(\Omega)}
  \leq C \|u_{0}\|_{L^{2}(\Omega)} t^{-\alpha}.
  \label{InTheFor3}
  \end{align}
  In addition, $u: (0,T] \rightarrow H_{e}^{2\beta}(\Omega)$ can be analytically extended to a sector
  $\{ z\in\mathbb{C}\,:\, z\neq 0,\, |\text{arg}(z)|<\pi/2 \}$.
  \item Let $u_{0} = 0$ and $f \in L^{\infty}([0,T];L^{2}(\Omega))$. Then the unique weak solution
  $u$ belongs to $L^{2}((0,T];H_{e}^{2\beta}(\Omega))$ such that $\lim_{t\rightarrow 0}\|u(\cdot,t)\|_{L^{2}(\Omega)} = 0$.
\end{enumerate}
\end{theorem}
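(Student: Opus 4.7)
The plan is to build the solution through the spectral decomposition supplied by Lemma \ref{EgienFunLemma}, verify that the formal series (\ref{SolutionParDiff}) converges in the claimed function spaces with the asserted bounds, and identify it with the unique weak solution produced by Theorem \ref{WeakSolutionTheorem}. Throughout I will use the following standard estimates for the Mittag-Leffler functions (see e.g.\ Podlubny or Gorenflo--Mainardi): for $\alpha \in (0,1)$ there is $C=C(\alpha)$ such that
\begin{align*}
|E_{\alpha,1}(-\lambda_{k}t^{\alpha})| + |E_{\alpha,\alpha}(-\lambda_{k}t^{\alpha})|
\leq \frac{C}{1+\lambda_{k}t^{\alpha}}, \quad t>0,\,\lambda_{k}\geq 0,
\end{align*}
together with the identity $\partial_{t}^{\alpha}E_{\alpha,1}(-\lambda_{k}t^{\alpha})=-\lambda_{k}E_{\alpha,1}(-\lambda_{k}t^{\alpha})$ (applied to $u_{k}-u_{0,k}$) and the entirety of $z\mapsto E_{\alpha,1}(z),\,E_{\alpha,\alpha}(z)$ on $\mathbb{C}$.

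For Part (1), I first note that under $k\in\mathcal{R}^{p}(\beta,\Lambda)$ the operator $L$ is self-adjoint positive on $L^{2}(\Omega)$, its domain equals $H_{e}^{2\beta}(\Omega)$ in the sense of norm equivalence $\|v\|_{H_{e}^{2\beta}(\Omega)}^{2}\asymp\sum_{k}(1+\lambda_{k}^{2})|(v,\phi_{k})|^{2}$; this uses the interior/boundary regularity for the nonlocal operator and the spectral asymptotics from Lemma \ref{EgienFunLemma}. Solving the projected ODE (\ref{OrdDiff}) with $f_{k}=0$ gives $u_{k}(t)=E_{\alpha,1}(-\lambda_{k}t^{\alpha})u_{0,k}$, and the Mittag-Leffler bound immediately yields
\begin{align*}
\|u(\cdot,t)\|_{L^{2}(\Omega)}^{2}=\sum_{k}|E_{\alpha,1}(-\lambda_{k}t^{\alpha})|^{2}|u_{0,k}|^{2}\leq C\|u_{0}\|_{L^{2}(\Omega)}^{2},
\end{align*}
and
\begin{align*}
\|u(\cdot,t)\|_{H_{e}^{2\beta}(\Omega)}^{2}\asymp\sum_{k}\lambda_{k}^{2}|u_{k}(t)|^{2}\leq\sum_{k}\frac{C\lambda_{k}^{2}}{(1+\lambda_{k}t^{\alpha})^{2}}|u_{0,k}|^{2}\leq Ct^{-2\alpha}\|u_{0}\|_{L^{2}(\Omega)}^{2},
\end{align*}
giving (\ref{InTheFor2}) and the first half of (\ref{InTheFor3}). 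The bound on $\|\partial_{t}^{\alpha}(u-u_{0})\|_{L^{2}(\Omega)}$ follows from (\ref{OrdDiff}) since $\partial_{t}^{\alpha}(u_{k}-u_{0,k})=-\lambda_{k}u_{k}$. Continuity at positive times in $H_{e}^{2\beta}$ and continuity in $L^{2}$ up to $t=0$ (using $E_{\alpha,1}(0)=1$) follow by dominated convergence on the spectral side. For the analytic extension to $\{|\arg z|<\pi/2\}$, observe that $z^{\alpha}$ remains in $\{|\arg w|<\alpha\pi/2\}\subset\{|\arg w|<\pi/2\}$, $z\mapsto E_{\alpha,1}(-\lambda_{k}z^{\alpha})$ is holomorphic there with a decay estimate $|E_{\alpha,1}(-\lambda_{k}z^{\alpha})|\leq C/(1+\lambda_{k}|z|^{\alpha}\cos(\alpha\arg z))$ that is uniform on compact subsectors, so termwise analytic continuation plus Weierstrass' theorem yields the claim.

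For Part (2) the ODE (\ref{OrdDiff}) gives
\begin{align*}
u_{k}(t)=\int_{0}^{t}(t-s)^{\alpha-1}E_{\alpha,\alpha}(-\lambda_{k}(t-s)^{\alpha})f_{k}(s)\,ds.
\end{align*}
Using the Mittag-Leffler bound and Cauchy--Schwarz, one obtains $\lambda_{k}^{2}|u_{k}(t)|^{2}\leq C\bigl(\int_{0}^{t}\lambda_{k}(t-s)^{\alpha-1}/(1+\lambda_{k}(t-s)^{\alpha})\,ds\bigr)^{2}\|f_{k}\|_{L^{\infty}(0,T)}^{2}$, and the inner integral equals $\alpha^{-1}\log(1+\lambda_{k}t^{\alpha})$; combined with Parseval this gives an $L^{2}$-in-time estimate in $H_{e}^{2\beta}$ after dividing into $\{\lambda_{k}t^{\alpha}\lesssim 1\}$ and $\{\lambda_{k}t^{\alpha}\gtrsim 1\}$ (the logarithm is absorbed by the $L^{2}$ integration in $t$). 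The vanishing $\lim_{t\to 0}\|u(\cdot,t)\|_{L^{2}(\Omega)}=0$ follows from the cruder bound $|u_{k}(t)|\leq C\alpha^{-1}t^{\alpha}\|f_{k}\|_{L^{\infty}(0,T)}$ by the dominated convergence theorem. Finally, uniqueness is inherited from Theorem \ref{WeakSolutionTheorem}: one checks by testing against $\phi_{k}$ and using Lemma \ref{WeakEquivalent1} that the spectral solution satisfies the weak formulation (\ref{WeakFormulation1}), hence coincides with the unique weak solution.

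The main obstacle is the first step of Part (1): showing the norm equivalence $\|v\|_{H_{e}^{2\beta}(\Omega)}^{2}\asymp\sum_{k}(1+\lambda_{k}^{2})|(v,\phi_{k})|^{2}$ in full generality. For the standard fractional Laplacian this is classical, but for $L$ with kernel $k\in\mathcal{R}^{p}(\beta,\Lambda)$ it relies on $H_{e}^{2\beta}$ a priori regularity of solutions to $Lv=g$ (the pointwise kernel assumption is precisely what allows this, as flagged in Remark \ref{jieshiCon}). Once this equivalence is in hand, every remaining step is a routine dominated-convergence argument on the spectral side, so the bulk of the verification reduces to bookkeeping with Mittag-Leffler decay.
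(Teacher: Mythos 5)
Your route is in substance the same one the paper takes: expand in the eigensystem of Lemma \ref{EgienFunLemma}, solve the projected equations (\ref{OrdDiff}), control the series through the Mittag--Leffler decay $|E_{\alpha,1}(-\lambda_k t^{\alpha})|\leq C/(1+\lambda_k t^{\alpha})$, and identify the result with the unique solution of Theorem \ref{WeakSolutionTheorem}; the paper simply outsources all of that bookkeeping to the methods of \cite{Sakamoto2011426}, and your execution of it (including the analytic extension and the Part (2) convolution estimates) is fine. The problem is that the single step the paper does write out in detail is exactly the step you leave open. You declare the equivalence $\|v\|_{H_{e}^{2\beta}(\Omega)}^{2}\asymp\sum_{k}(1+\lambda_{k}^{2})|(v,\phi_{k})|^{2}$ to be ``the main obstacle'' and defer it to an unproved $H^{2\beta}$ a priori regularity theory for $Lv=g$; as written this is a genuine gap, and it is not a harmless one, since a global two-sided identification of $H_{e}^{2\beta}(\Omega)$ with the spectral domain of $L$ is delicate for $\beta>1/2$ (which is forced here by $\beta\in(n/4,1)$, $n=2,3$) because of the boundary behaviour of the eigenfunctions under the exterior condition. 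So the argument cannot be closed by simply asserting that equivalence.

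What the paper actually uses is weaker and cheaper, and requires no elliptic regularity at all: since $k\in\mathcal{R}^{p}(\beta,\Lambda)$ has the translation-invariant homogeneous form (\ref{PointwiseAss}), $L$ is a Fourier multiplier whose symbol $A(\xi)=\int_{\mathcal{S}^{n-1}}|\xi\cdot\theta|^{2\beta}a(\theta)\,d\theta$ satisfies $\Lambda^{-1}|\xi|^{2\beta}\leq A(\xi)\leq\Lambda|\xi|^{2\beta}$ by (\ref{NonDegenrate})--(\ref{AssA}) (this is the content of \cite{fernandez2014boundary} and of Remark \ref{jieshiCon}); Plancherel then gives
\begin{align*}
\Lambda^{-1/2}\|u(\cdot,t)\|_{H^{2\beta}(\mathbb{R}^{n})}\leq\|Lu(\cdot,t)\|_{L^{2}(\mathbb{R}^{n})}\leq\Lambda^{1/2}\|u(\cdot,t)\|_{H^{2\beta}(\mathbb{R}^{n})}
\end{align*}
for functions vanishing outside $\Omega$, and it is this operator-norm comparison, not a characterization of the domain of $L$ as $H_{e}^{2\beta}(\Omega)$, that is fed into the Sakamoto--Yamamoto scheme to convert the spectral bounds $\sum_{k}\lambda_{k}^{2}|u_{k}(t)|^{2}\leq Ct^{-2\alpha}\|u_{0}\|_{L^{2}}^{2}$ into (\ref{InTheFor3}). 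To repair your proposal you should replace your postulated norm equivalence by this multiplier estimate: it is precisely what the pointwise kernel assumption buys, and with it every remaining step of your argument goes through as you wrote it.
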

\begin{proof}
According to Theorem \ref{WeakSolutionTheorem}, there exists a unique weak solution under the conditions stated in both
conclusions stated above.
Referring to \cite{fernandez2014boundary}, we note that the Fourier symbol of the operator $L$ is
\begin{align*}
A(\xi) = \int_{\mathcal{S}^{n-1}}|\xi\cdot\theta|^{2}a(\theta)d\theta,
\end{align*}
and it is clear that
\begin{align}\label{BoundTwo}
0 < \Lambda^{-1}|\xi|^{2\beta} \leq A(\xi) \leq \Lambda |\xi|^{2\beta}.
\end{align}
Using Plancherel's theorem for Fourier transforms, we have
\begin{align*}
\|Lu(\cdot,t)\|_{L^{2}(\mathbb{R}^{n})}^{2} = \int_{\mathbb{R}^{n}}|Lu(x,t)|^{2}dx = \int_{\mathbb{R}^{n}}|A(\xi)\mathcal{F}(u)(\xi,t)|^{2}d\xi.
\end{align*}
Hence, using (\ref{BoundTwo}), we can conclude that
\begin{align}\label{EquiRegularity0}
\Lambda^{-1/2}\|u(\cdot,t)\|_{H^{2\beta}(\mathbb{R}^{n})} \leq \|Lu(\cdot,t)\|_{L^{2}(\mathbb{R}^{n})}
\leq \Lambda^{1/2} \|u\|_{H^{2\beta}(\mathbb{R}^{n})}.
\end{align}
Because $u$ is a weak solution of (\ref{TimeSpaceEquation1}), we know that $u = 0$ a.e. in $\mathbb{R}^{n}\backslash\Omega$.
Hence, we obtain that
\begin{align}\label{EquiRegularity1}
\Lambda^{-1/2}\|u(\cdot,t)\|_{H_{e}^{2\beta}(\mathbb{R}^{n})} \leq \|Lu(\cdot,t)\|_{L^{2}(\Omega)}
\leq \Lambda^{1/2} \|u\|_{H_{e}^{2\beta}(\mathbb{R}^{n})}.
\end{align}
With these preparations, we could apply the methods used in \cite{Sakamoto2011426} to conclude our claims.
Since the proof is rather straightforward, we will omit the details for concisely.
\end{proof}


\subsection{Fractional Duhamel's principle}

Let us recall the problem under consideration
\begin{align}\label{MaxTimeSpaceEquation1}
\left\{\begin{aligned}
\partial_{t}^{\alpha}u(x,t) + Lu(x,t) & = \rho(t)g(x) \quad \text{in }\Omega\times[0,T], \\
u(x,t) & = 0 \quad\quad\quad\quad \text{in }\mathbb{R}^{n}\backslash\Omega, \, t\geq 0, \\
u(x,0) & = 0 \quad\quad\quad\quad \text{in }\Omega,\, \text{for }t = 0,
\end{aligned}\right.
\end{align}
where $g\in C^{1}([0,T])$, $g\in L_{e}^{2}(\Omega)$ with $g\geq 0$ and $g \not\equiv 0$.
\begin{theorem}\label{fractionalDohamelTheorem}
Let $u$ be the solution to (\ref{MaxTimeSpaceEquation1}), where $\rho\in C^{1}([0,T])$ and $g\in L_{e}^{2}(\Omega)$.
Then $u$ allows the representation
\begin{align*}
u(x,t) = (\mu*v)(x,t) = \int_{0}^{t}\mu(t-s)v(x,s)ds\quad (0<t\leq T),
\end{align*}
where $v(x,t)$ solves the following homogeneous problem
\begin{align}\label{MaxHomogeneousEqu}
\left\{\begin{aligned}
\partial_{t}^{\alpha}(v(x,t)-g) + Lv(x,t) & = 0 \quad & \text{in }\Omega\times[0,T],\quad\, \\
u(x,t) & = 0 & \text{in }\mathbb{R}^{n}\backslash\Omega, \, t\geq 0, \\
u(x,0) & = g(x)  & \text{in }\Omega,\, \text{for }t = 0,
\end{aligned}\right.
\end{align}
and
\begin{align}\label{DefMuDuhamel}
\mu(t) := \frac{d}{dt}(g_{\alpha}*\rho)(t) = \frac{1}{\Gamma(\alpha)}\frac{d}{dt}\int_{0}^{t}
\frac{\rho(s)}{(t-s)^{1-\alpha}}dx \quad 0<t\leq T.
\end{align}
\end{theorem}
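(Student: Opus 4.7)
The plan is to set $w(x,t) := (\mu * v)(x,t)$, show that $w$ solves the inhomogeneous Cauchy problem (\ref{MaxTimeSpaceEquation1}), and then conclude $u = w$ from the uniqueness guaranteed by Theorem \ref{WeakSolutionTheorem}. All manipulations are driven by one convolution identity, after which the cancellation producing the source term $\rho(t)g(x)$ is almost automatic.

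First I would establish the identity $g_{1-\alpha} * \mu = \rho$. Starting from $\mu = \frac{d}{dt}(g_\alpha * \rho)$ and noting that $(g_\alpha * \rho)(0) = 0$ (since $\rho$ is bounded and $g_\alpha \in L^{1}_{\mathrm{loc}}$), I can commute $g_{1-\alpha}*$ with $\frac{d}{dt}$ to obtain
\[
g_{1-\alpha} * \mu = \frac{d}{dt}(g_{1-\alpha} * g_\alpha * \rho) = \frac{d}{dt}(1 * \rho) = \rho,
\]
using $g_{1-\alpha} * g_\alpha = g_1 \equiv 1$.

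Next I compute $\partial_t^\alpha w + Lw$. Since $w(x,0) = 0$ (see below), $\partial_t^\alpha w = \frac{d}{dt}(g_{1-\alpha} * w) = \frac{d}{dt}(g_{1-\alpha} * \mu * v) = \frac{d}{dt}(\rho * v)$. For the spatial term $Lw = L(\mu * v) = \mu * Lv$, and the equation for $v$ gives $Lv = -\frac{d}{dt}(g_{1-\alpha} *(v-g))$; because $(g_{1-\alpha}*(v-g))(x,0) = 0$, the same commutation yields
\[
\mu * Lv = -\frac{d}{dt}\bigl(\mu * g_{1-\alpha} * (v-g)\bigr) = -\frac{d}{dt}\bigl(\rho * (v-g)\bigr).
\]
The $t$-independence of $g$ gives $(\rho * g)(x,t) = g(x)\int_0^t\rho(s)\,ds$, so $\frac{d}{dt}(\rho * g) = \rho(t)g(x)$. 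Summing,
\[
\partial_t^\alpha w + Lw = \frac{d}{dt}(\rho * v) - \frac{d}{dt}(\rho * v) + \rho(t)g(x) = \rho(t)g(x),
\]
which is exactly the source in (\ref{MaxTimeSpaceEquation1}).

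The hard part will be justifying the interchange of $\frac{d}{dt}$ with convolutions, given that $g_\alpha,\,g_{1-\alpha}$ are singular at $t = 0$ and that $v$ is Banach-space valued. Here the regularity provided by Theorem \ref{RegularityTheorem}(1) is essential: it furnishes $v \in C([0,T];L^2(\Omega))\cap C((0,T];H_e^{2\beta}(\Omega))$ with $v(\cdot,0)=g$ and the bounds (\ref{InTheFor2})--(\ref{InTheFor3}) on $v$ and $Lv$. Combined with $\rho\in C^1([0,T])$, which supplies the decomposition $\mu(t) = g_\alpha(t)\rho(0) + (g_\alpha * \rho')(t)$ and hence $\mu \in L^1_{\mathrm{loc}}$, Fubini's theorem and the Banach-valued Leibniz rule legitimise every step above. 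Finally, $w(\cdot,t)\to 0$ in $L^2(\Omega)$ as $t\to 0^+$ follows from
\[
\|w(\cdot,t)\|_{L^2(\Omega)} \leq \sup_{s\in[0,T]}\|v(\cdot,s)\|_{L^2(\Omega)}\cdot\int_0^t|\mu(s)|\,ds \longrightarrow 0,
\]
and uniqueness from Theorem \ref{WeakSolutionTheorem} then identifies $u = \mu * v$, as claimed.
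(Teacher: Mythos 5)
Your proposal is correct and follows essentially the same route as the paper: define $\tilde u=\mu*v$, use the identity $g_{1-\alpha}*\mu=\rho$ (equivalently $g_{1-\alpha}*g_{\alpha}=1$) together with Fubini and the equation for $v$ to show $\partial_t^{\alpha}\tilde u+L\tilde u=\rho g$, check the vanishing initial value and the needed regularity of $\mu$ and $v$ (the paper cites the decomposition $\mu(t)=\frac{1}{\Gamma(\alpha)}\bigl(\rho(0)t^{\alpha-1}+\int_0^t\rho'(s)(t-s)^{\alpha-1}ds\bigr)$ and the bounds from Theorem \ref{RegularityTheorem}, exactly the ingredients you invoke), and conclude by the uniqueness of Theorem \ref{WeakSolutionTheorem}. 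The only cosmetic difference is that you substitute $Lv=-\partial_t^{\alpha}(v-g)$ after pulling $L$ inside the convolution, whereas the paper substitutes inside the time-derivative computation; the two are algebraically identical.
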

\begin{proof}
Because $\rho g\in L^{\infty}([0,T];L_{e}^{2}(\Omega))$, equation (\ref{MaxTimeSpaceEquation1}) admits a uniqueness
solution $u \in V_{p}([0,T],\Omega)$ with $1\leq p < 2/(1-\alpha)$ by Theorem \ref{WeakSolutionTheorem}.
In addition, we know that $u \in L^{2}((0,T];H^{2\beta}_{e}(\Omega))$ and $\lim_{t\rightarrow 0}\|u(\cdot,t)\|_{L^{2}(\Omega)} = 0$.
Setting
\begin{align}\label{SolutionConsMax}
\tilde{u}(x,t) := \int_{0}^{t}\mu(t-s)v(x,s)ds,
\end{align}
and we may use similar deduction used in the proof of Lemma 4.1 in \cite{liu2015strong} to conclude that
\begin{align*}
\tilde{u} \in L^{\infty}((0,T];H_{e}^{2\beta}(\Omega)) \subset L^{2}((0,T];H_{e}^{2\beta}(\Omega)), \quad \lim_{t\rightarrow 0}\|\tilde{u}(\cdot,t)\|_{L^{2}(\Omega)} = 0.
\end{align*}
From the proof of Lemma 4.1 in \cite{liu2015strong}, we also know that
\begin{align}\label{SolutMax1}
\mu(t) = \frac{1}{\Gamma(\alpha)}\left( \frac{\rho(0)}{t^{1-\alpha}} + \int_{0}^{t}\frac{\rho'(s)}{(t-s)^{1-\alpha}} ds \right),
\end{align}
and
\begin{align}\label{SolutMax2}
\mu\in L^{1}((0,T)), \quad |\mu(t)|\leq Ct^{\alpha-1}\quad \text{with }0<t\leq T.
\end{align}
By definition, we have
\begin{align*}
\partial_{t}^{\alpha}(\tilde{u}(x,t)-\tilde{u}(x,0)) & = \partial_{t}^{\alpha}\tilde{u}(x,t) \\
& = \frac{1}{\Gamma(1-\alpha)}\frac{d}{dt}\left\{ \int_{0}^{t}(t-s)^{-\alpha}
\int_{0}^{s}\mu(\tau)v(x,s-\tau)d\tau dsp \right\}  \\
& = \frac{1}{\Gamma(1-\alpha)}\frac{d}{dt}\left\{ \int_{0}^{t}\int_{\tau}^{t} (t-s)^{-\alpha}v(x,s-\tau)ds
\mu(\tau)d\tau \right\} \\
& = \frac{1}{\Gamma(1-\alpha)}\frac{d}{dt}\left\{ \int_{0}^{t}\int_{0}^{t-\tau}(t-\tau-s)^{-\alpha}v(x,s)ds\mu(\tau)d\tau \right\} \\
& = \frac{d}{dt}(\mu*g_{1-\alpha}*v)(x,t) = \mu*\partial_{t}^{\alpha}(v-g) + \mu*g_{1-\alpha}\cdot g.
\end{align*}
For the time fractional term, we have
\begin{align*}
\|\partial_{t}^{\alpha}(\tilde{u}(\cdot,t) - \tilde{u}(\cdot,0))\|_{L^{2}(\Omega)} & = \|\partial_{t}^{\alpha}\tilde{u}(\cdot,t)\|_{L^{2}(\Omega)}
= \|\partial_{t}(g_{1-\alpha}*\mu*v)(\cdot,t)\|_{L^{2}(\Omega)} \\
\leq & \|\mu\|_{L^{1}(0,T)}(\|\partial_{t}^{\alpha}(v-g)\|_{L^{\infty}((0,T];L^{2}(\Omega))} + g_{1-\alpha}(t)\|g\|_{L^{2}(\Omega)}).
\end{align*}
This implies that the above time fractional differentiation makes sense in $L^{2}(\Omega)$ for $0<t\leq T$.
Now we illustrate $\tilde{u}$ satisfies equation (\ref{MaxTimeSpaceEquation1}).
Using equation (\ref{MaxHomogeneousEqu}) and noticing that $\mu = \frac{d}{dt}(g_{\alpha}*\rho)$, we obtain
\begin{align*}
\partial_{t}^{\alpha}\tilde{u}(x,t) & = - L(\mu*v) + \frac{d}{dt}(g_{\alpha}*\rho)*g_{1-\alpha}\cdot g \\
& = - L\tilde{u}(x,t) + \rho g.
\end{align*}
Therefore, we conclude that $\partial_{t}^{\alpha}\tilde{u} + L\tilde{u} = \rho g$ and the proof is completed.
\end{proof}

\subsection{Uniqueness}

In this section, we prove a uniqueness theorem for Problem \ref{ProblemInverse} as follow.
\begin{theorem}\label{UniquenessTheorem}
Under the same settings in Problem \ref{ProblemInverse}, we further assume that $\rho \in C^{1}([0,T])$,
$g\in L^{2}(\Omega)$, $g\geq 0$ and $g\not\equiv 0$. Then $u(x_{0},t) = 0$ ($0\leq t\leq T$) implies
$\rho(t) = 0$ ($0\leq t\leq T$).
\end{theorem}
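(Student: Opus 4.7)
The plan is to reduce the inverse problem to a convolution identity via the fractional Duhamel's principle, then use the strong maximum principle to establish strict positivity at the observation point, and finally invert the resulting Volterra convolution.

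First, I would invoke Theorem \ref{fractionalDohamelTheorem} to rewrite the solution as $u(x,t) = (\mu * v)(x,t)$, where $\mu = \frac{d}{dt}(g_\alpha * \rho)$ and $v$ solves the homogeneous problem (\ref{MaxHomogeneousEqu}) with $v_0 = g$. Evaluating at $x_0$, the observation hypothesis becomes the convolution identity
\[
\int_0^t \mu(t-s)\, v(x_0,s)\,ds = 0, \qquad t \in [0,T].
\]
So uniqueness for $\rho$ reduces to (i) showing that $v(x_0, \cdot)$ does not vanish on any initial interval and (ii) inverting the Abel-type relation between $\mu$ and $\rho$.

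The heart of the argument, and the main obstacle, is establishing that $v(x_0, t) > 0$ for every $t \in (0, T]$. Since $g \geq 0$ and $g \not\equiv 0$, the weak maximum principle (Theorem \ref{WeakMaximumParabolic1}) gives $v \geq 0$ a.e. Suppose for contradiction that $v(x_0, t_*) = 0$ for some $t_* \in (0, T]$. By Theorem \ref{RegularityTheorem}, $v(\cdot, t) \in H_e^{2\beta}(\Omega)$ for every $t > 0$, and the assumption $\beta > n/4$ together with Sobolev embedding gives spatial continuity, so a genuine pointwise value makes sense. Combined with $v \geq 0$, this yields a small cylinder $Q = (t_* - \delta, t_*) \times B(x_0, r) \subset Q_T$ on which $\essinf_Q v = 0 = \essinf_{Q_T} v$; since $\essinf_\Omega g \geq 0$, the hypothesis $\essinf_{Q_T} v \leq \essinf_\Omega g$ of Theorem \ref{StrongMaxiPrinTheorem} is met. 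That theorem forces $v \equiv 0$ on $(0, t_*) \times \Omega$, and the continuity $v \in C([0,T]; L^2(\Omega))$ then passes to the limit $t \to 0^+$ to give $g \equiv 0$, a contradiction. This step is the main obstacle because $x_0$ may lie outside $\supp g$, so positivity is not inherited from the initial data; rather, it must be propagated using the nonlocal strong maximum principle just established.

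With $v(x_0, \cdot) > 0$ on $(0, T]$, the Titchmarsh convolution theorem applied to $\mu * v(x_0, \cdot) \equiv 0$ on $[0,T]$ forces $\mu = 0$ a.e. on $[0,T]$, since the vanishing set of $v(x_0, \cdot)$ at the left endpoint is contained in $\{0\}$, which is null. Finally, from $\mu(t) = \frac{d}{dt}(g_\alpha * \rho)(t) \equiv 0$ and $(g_\alpha * \rho)(0) = 0$, we obtain $g_\alpha * \rho \equiv 0$; convolving with $g_{1-\alpha}$ and differentiating (Abel inversion, using the regularity $\rho \in C^1([0,T])$) yields $\rho \equiv 0$ on $[0,T]$, completing the proof.
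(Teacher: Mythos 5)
Your proposal is correct and follows essentially the same route as the paper: the fractional Duhamel representation $u(x_{0},\cdot)=\mu*v(x_{0},\cdot)$, positivity of $v(x_{0},\cdot)$ on $(0,T)$ via the weak and strong maximum principles, the Titchmarsh convolution theorem to force $\mu=0$, and recovery of $\rho\equiv 0$ (the paper writes $\rho=g_{1-\alpha}*\mu$ and applies Young's inequality, which is equivalent to your Abel-inversion step). Your explicit contradiction argument for strict positivity at $x_{0}$ and the check of integrability of $\mu$ and $v(x_{0},\cdot)$ (needed for Titchmarsh, and supplied in the paper by the bounds $|v(x_{0},t)|\leq C\|g\|_{L^{2}(\Omega)}t^{-\alpha}$ and $|\mu(t)|\leq Ct^{\alpha-1}$) are the only points where the paper's write-up is terser or more explicit than yours.
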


With the strong maximum principle and fractional Duhamel's principle obtained in the previous section, this theorem
could be proved by using similar ideas from a recent paper \cite{liu2015strong}. For completeness of this work, we will
provide a sketch of the proof.
\begin{proof}
Assume the solution $u$ to (\ref{MaxTimeSpaceEquation1}) vanishes in $\{x_{0}\}\times [0,T]$ for some $x_{0}\in \Omega$.
According to the fractional Duhamel's principle, we obtain
\begin{align*}
u(x_{0},t) = \int_{0}^{t}\mu(t-s)v(x_{0},s)ds = 0
\end{align*}
where $\mu$ was defined in (\ref{DefMuDuhamel}) and $v$ solves (\ref{MaxHomogeneousEqu}) with the initial data $g$.
By the regularity properties of the solution and Sobolev embedding theorems, we find that
\begin{align*}
|v(x_{0},t)| \leq C \|v(\cdot,t)\|_{H_{e}^{2\beta}(\Omega)} \leq C \|g\|_{L^{2}(\Omega)}t^{-\alpha}
\end{align*}
and thus $v(x_{0},\cdot)\in L^{1}((0,T])$.
Meanwhile, (\ref{SolutMax2}) ensures $\mu \in L^{1}((0,T])$.  Then the Titchmarsh convolution theorem (see \cite{Titchmarsh01011926}) implies
that there exist $T_{1},T_{2}\geq 0$ satisfying $T_{1} + T_{2} \geq T$ such that $\mu(t) = 0$ for almost all
$t\in(0,T_{1})$ and $v(x_{0},t) = 0$ for all $t\in[0,T_{2}]$.
Considering the initial data $g$ satisfies $g\geq 0$, $g\not\equiv 0$ and recalling the regularity properties of $v$,
Theorem \ref{WeakMaximumParabolic1} yields $v(x_{0},t) \geq 0$ in $(0,T)$.
In addition, Theorem \ref{StrongMaxiPrinTheorem} asserts that $v(x_{0},\cdot) > 0$ in $(0,T)$.
Hence, the only choice is that $T_{2} = 0$ and thus $T_{1} = T$, that is, $\mu = 0$ a.e. in $(0,T)$.

Because $\rho(t) = (g_{1-\alpha}*\mu)(t)$, Young's inequality yields
\begin{align*}
\|\rho\|_{L^{1}((0,T])} \leq \frac{T^{1-\alpha}}{\Gamma(2-\alpha)}\|\mu\|_{L^{1}((0,T])} = 0,
\end{align*}
which finishes the proof.
\end{proof}


\appendix
\section{Some classical and technical results}
\subsection{Properties of the time-fractional derivative}\label{Appendix1}
In \cite{Zacher2008137}, the author provide an important formula that is for a sufficiently smooth function $u$ on $(0,T)$
one has for a.e. $t\in (0,T)$,
\begin{align}\label{FundamentalIdentity}
\begin{split}
& H'(u(t))\frac{d}{dt}(k*u)(t) = \frac{d}{dt}(k*H(u))(t) + (-H(u(t))+H'(u(t))u(t))k(t) \\
& \qquad\qquad
+ \int_{0}^{t}(H(u(t-s))-H(u(t))-H'(u(t))[u(t-s)-u(t)])(-\dot{k}(s))ds,
\end{split}
\end{align}
where $H\in C^{1}(\mathbb{R})$ and $k\in W^{1,1}([0,T])$. Taking $H(y)=\frac{1}{2}(y^{+})^{2}$,
for any function $u \in L^{2}([0,T])$, there will be a direct corollary of the above formula
\begin{align}\label{AppendexTimeFractional1}
u(t)^{+}\frac{d}{dt}(k*u)(t) \geq \frac{1}{2}\frac{d}{dt}(k*(u^{+})^{2}), \quad \text{a.e. }t\in(0,T).
\end{align}
Denote $v = -u$ and replace $u$ in (\ref{AppendexTimeFractional1}) by $v$, we will obtain
\begin{align}\label{AppendexTimeFractional2}
v(t)^{+}\frac{d}{dt}(k*v)(t) \geq \frac{1}{2}\frac{d}{dt}(k*(v^{+})^{2}), \quad \text{a.e. }t\in(0,T).
\end{align}
Now replacing $u$ back into (\ref{AppendexTimeFractional2}), we find that
\begin{align}\label{AppendexTimeFractional3}
u(t)^{-}\frac{d}{dt}(k*u)(t) \leq -\frac{1}{2}\frac{d}{dt}(k*(u^{-})^{2}), \quad \text{a.e. }t\in(0,T).
\end{align}

The following two lemmas which could be found in \cite{Zacher2012} are important for our deduction.

\begin{lemma}\label{FundamentalIdentity2}
Let $T > 0$ and $\alpha \in (0,1)$. Suppose that $v \in {_{0}}W{^{1,1}}([0,T])$ and $\varphi \in C^{1}([0,T])$.
Then
\begin{align*}
(g_{\alpha}*(\varphi\dot{v}))(t) = \varphi(t)(g_{\alpha}*\dot{v})(t)
+ \int_{0}^{t}v(\sigma)\partial_{\sigma}(g_{\alpha}(t-\sigma)[\varphi(t)-\varphi(\sigma)])d\sigma,
\end{align*}
for a.e. $t\in(0,T)$. If in addition $v$ is nonnegative and $\varphi$ is nondecreasing there holds
\begin{align*}
(g_{\alpha}*(\varphi\dot{v}))(t) \geq \varphi(t)(g_{\alpha}*\dot{v})(t)
- \int_{0}^{t}g_{\alpha}(t-\sigma)\dot{\varphi}(\sigma)v(\sigma)d\sigma,
\end{align*}
for a.e. $t\in(0,T)$.
\end{lemma}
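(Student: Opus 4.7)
The plan is to move $\varphi(t)(g_{\alpha}*\dot{v})(t)$ to the left-hand side, which combines the two convolution terms into a single integral whose integrand carries the factor $\varphi(\sigma)-\varphi(t)$. Explicitly, the identity to establish is
\begin{equation*}
\int_{0}^{t} g_{\alpha}(t-\sigma)\bigl[\varphi(\sigma)-\varphi(t)\bigr]\dot{v}(\sigma)\,d\sigma = \int_{0}^{t} v(\sigma)\,\partial_{\sigma}\bigl(g_{\alpha}(t-\sigma)[\varphi(t)-\varphi(\sigma)]\bigr)\,d\sigma.
\end{equation*}
This is exactly integration by parts applied to the function $f(\sigma):=g_{\alpha}(t-\sigma)[\varphi(t)-\varphi(\sigma)]$ paired with $\dot{v}$.

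The key issue, and the step I expect to be the main obstacle, is justifying that the boundary terms vanish despite the singularity of $g_{\alpha}$ at $0$. At $\sigma=0$ we use the assumption $v\in {_{0}}W{^{1,1}}([0,T])$, which gives $v(0)=0$ so that $f(0)v(0)=0$. At $\sigma=t$ the kernel $g_{\alpha}(t-\sigma)=(t-\sigma)^{\alpha-1}/\Gamma(\alpha)$ blows up, but since $\varphi\in C^{1}$ we have $\varphi(t)-\varphi(\sigma)=O(t-\sigma)$, so $f(\sigma)=O((t-\sigma)^{\alpha})\to 0$ as $\sigma\to t^{-}$. Thus $f\cdot v$ vanishes at both endpoints, and the integration by parts is legitimate (a standard density argument, approximating $v$ by smooth ${_{0}}C^{\infty}$ functions and passing to the limit in $L^{1}$, handles the case of merely $W^{1,1}$ regularity).

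For the second assertion, I would expand the derivative appearing in the identity:
\begin{equation*}
\partial_{\sigma}\bigl(g_{\alpha}(t-\sigma)[\varphi(t)-\varphi(\sigma)]\bigr) = -\dot{g}_{\alpha}(t-\sigma)[\varphi(t)-\varphi(\sigma)] - g_{\alpha}(t-\sigma)\dot{\varphi}(\sigma).
\end{equation*}
Since $g_{\alpha}(s)=s^{\alpha-1}/\Gamma(\alpha)$ with $\alpha\in(0,1)$, the kernel is strictly decreasing, so $-\dot{g}_{\alpha}(t-\sigma)\geq 0$. Combined with $\varphi(t)-\varphi(\sigma)\geq 0$ (nondecreasing $\varphi$) and $v(\sigma)\geq 0$, the first term integrates to a nonnegative quantity and can be dropped. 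What remains is precisely the claimed inequality with $-\int_{0}^{t}g_{\alpha}(t-\sigma)\dot{\varphi}(\sigma)v(\sigma)\,d\sigma$ on the right. Thus the only delicate point is the boundary analysis for the integration by parts; once that is in hand, both statements follow by bookkeeping and sign considerations.
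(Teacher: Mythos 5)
Your proof is correct. The paper itself does not prove this lemma --- it is quoted in the appendix with a citation to \cite{Zacher2012} --- and your argument (rewriting the difference of the two convolution terms as a single integral against $\dot{v}$, integrating by parts with the boundary terms killed by $v(0)=0$ and by $g_{\alpha}(t-\sigma)[\varphi(t)-\varphi(\sigma)]=O((t-\sigma)^{\alpha})$ as $\sigma\to t^{-}$, then dropping the nonnegative term $-\dot{g}_{\alpha}(t-\sigma)[\varphi(t)-\varphi(\sigma)]v(\sigma)$ for the inequality) is precisely the standard proof of this identity; since $v\in{_{0}}W{^{1,1}}$ is absolutely continuous and the differentiated kernel term is $O((t-\sigma)^{\alpha-1})$, hence integrable, all the steps you flag as delicate do go through.
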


\begin{lemma}\label{FundamentalIdentity3}
Let $T > 0$, $k \in W^{1,1}([0,T])$, $v\in L^{1}([0,T])$, and $\varphi \in C^{1}([0,T])$. Then
\begin{align*}
\varphi(t)\frac{d}{dt}(k*v)(t) = \frac{d}{dt}(k*[\varphi v])(t)
+ \int_{0}^{t}\dot{k}(t-\sigma)(\varphi(t)-\varphi(\sigma))v(\sigma)d\sigma,
\end{align*}
for a.e. $t\in (0,T)$.
\end{lemma}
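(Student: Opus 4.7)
The plan is to reduce the identity to the elementary Leibniz-type differentiation formula for convolutions with a $W^{1,1}$ kernel, and then algebraically manipulate the two resulting expressions.

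\textbf{Step 1: the core convolution differentiation formula.} The main technical ingredient is the a.e.\ identity
\begin{align*}
\frac{d}{dt}(k*w)(t) = k(0)\,w(t) + (\dot{k}*w)(t),\qquad \text{a.e. }t\in(0,T),
\end{align*}
valid for any $k\in W^{1,1}([0,T])$ and $w\in L^{1}([0,T])$. I would establish this via the distributional derivative: for $\phi\in C_{c}^{\infty}((0,T))$, I compute $-\int_{0}^{T}(k*w)(t)\phi'(t)\,dt$ by writing it as a double integral over $\{(t,s):0\le s\le t\le T\}$, applying Fubini to integrate first in $t$, then integrating by parts in $t$ on $[s,T]$ using the absolute continuity of $k$. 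The boundary term at $t=T$ vanishes because $\phi(T)=0$, and the boundary term at $t=s$ produces exactly $k(0)\,w(s)\phi(s)$; the interior term produces $(\dot{k}*w)(t)\phi(t)$. Since $k*w$ is absolutely continuous, its distributional derivative equals its a.e.\ classical derivative.

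\textbf{Step 2: apply the formula to the two convolutions.} Because $\varphi\in C^{1}([0,T])$ is bounded on $[0,T]$, $\varphi v\in L^{1}([0,T])$. Applying the formula from Step~1 with $w=v$ and then with $w=\varphi v$ yields, for a.e.\ $t\in(0,T)$,
\begin{align*}
\frac{d}{dt}(k*v)(t) &= k(0)\,v(t) + \int_{0}^{t}\dot{k}(t-\sigma)\,v(\sigma)\,d\sigma,\\
\frac{d}{dt}(k*[\varphi v])(t) &= k(0)\,\varphi(t)\,v(t) + \int_{0}^{t}\dot{k}(t-\sigma)\,\varphi(\sigma)\,v(\sigma)\,d\sigma.
\end{align*}

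\textbf{Step 3: combine.} Multiplying the first identity by $\varphi(t)$ and subtracting the second, the pointwise terms $k(0)\varphi(t)v(t)$ cancel, and the two integrals combine into
\begin{align*}
\int_{0}^{t}\dot{k}(t-\sigma)\bigl(\varphi(t)-\varphi(\sigma)\bigr)v(\sigma)\,d\sigma,
\end{align*}
which is exactly the asserted identity after rearrangement. The only delicate point is the a.e.\ differentiation formula of Step~1; the rest is pure algebra. I do not anticipate any essential obstacle, since the $C^{1}$ regularity of $\varphi$ (and hence boundedness of $\varphi$ on $[0,T]$) keeps $\varphi v$ in $L^{1}$, so the same lemma applies to both convolutions without further hypotheses.
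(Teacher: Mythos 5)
Your proof is correct. The paper does not actually prove Lemma \ref{FundamentalIdentity3}; it simply cites Zacher's work, so there is no in-paper argument to compare against. Your reduction to the a.e.\ Leibniz formula $\frac{d}{dt}(k*w) = k(0)w + \dot{k}*w$ for $k\in W^{1,1}$, $w\in L^{1}$ (which can also be seen directly from $k*w = k(0)(1*w) + 1*(\dot{k}*w)$, both summands being absolutely continuous), followed by the cancellation of the $k(0)\varphi(t)v(t)$ terms, is the standard and complete argument; the hypothesis $\varphi\in C^{1}$ is indeed only needed to keep $\varphi v\in L^{1}$ and to make sense of $\varphi(t)-\varphi(\sigma)$ pointwise.
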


\subsection{Properties of the space-fractional derivative}\label{Appendix2}

The following lemmas are used in our proof and these lemmas could be found in \cite{FelKass2013,zacher2010weak,Zacher2012}.

\begin{lemma}\label{FundamentalIdentity4}
$\quad$
\begin{enumerate}
  \item Let $q > 1$, $a,b > 0$ and $\tau_{1},\tau_{2} \geq 0$. Set $\vartheta(q) = \max\{4, (6q-5)/2\}$. Then
  \begin{align*}
  (b-a)\left( \tau_{1}^{q+1}a^{-q} - \tau_{2}^{q+1}b^{-q} \right) \geq &
  \frac{1}{q-1}\tau_{1}\tau_{2}\left( \left( \frac{b}{\tau_{2}} \right)^{\frac{1-q}{2}}-\left( \frac{a}{\tau_{1}} \right)^{\frac{1-q}{2}} \right)^{2}  \\
  & - \vartheta(q)(\tau_{1}-\tau_{2})^{2}
  \left( \left( \frac{b}{\tau_{2}} \right)^{1-q}+\left( \frac{a}{\tau_{1}} \right)^{1-q} \right).
  \end{align*}
  Since $1-q < 0$ the division by $\tau_{1} = 0$ or $\tau_{2} = 0$ is allowed.
  \item Let $q \in (0,1)$, $a,b>0$ and $\tau_{1},\tau_{2}\geq 0$. Set $\zeta(q) = \frac{4q}{1-q}$,
  $\zeta_{1}(q) = \frac{1}{6}\zeta(q)$ and $\zeta_{2}(q) = \zeta(q)+\frac{9}{q}$. Then
  \begin{align*}
  (b-a)(\tau_{1}^{2}a^{-q} - \tau_{2}^{2}b^{-q}) \geq & \zeta_{1}(q)\left( \tau_{2}b^{\frac{1-q}{2}}-\tau_{2}a^{\frac{1-q}{2}} \right)^{2}  \\
  & - \zeta_{2}(q)(\tau_{2}-\tau_{1})^{2}(b^{1-q} + a^{1-q}).
  \end{align*}
\end{enumerate}
\end{lemma}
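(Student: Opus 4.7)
The two statements are purely algebraic inequalities in the four real variables $a,b,\tau_1,\tau_2$, and my plan is to reduce each one to a one-variable (diagonal) inequality plus a $(\tau_1-\tau_2)$-correction, then attack the diagonal part by Cauchy--Schwarz and the correction by Young's inequality. Introducing $\alpha:=a/\tau_1$ and $\beta:=b/\tau_2$ (the degenerate cases $\tau_i=0$ are handled separately by direct inspection), a direct expansion in part~(1) yields
\begin{equation*}
(b-a)\bigl(\tau_1^{q+1}a^{-q}-\tau_2^{q+1}b^{-q}\bigr)=\tau_1\tau_2(\alpha-\beta)(\beta^{-q}-\alpha^{-q})-(\tau_1-\tau_2)\bigl(\tau_1\alpha^{1-q}-\tau_2\beta^{1-q}\bigr),
\end{equation*}
splitting the left-hand side into a nonnegative symmetric piece proportional to $\tau_1\tau_2$ and an antisymmetric piece proportional to $\tau_1-\tau_2$.

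For the diagonal piece I would represent both differences as integrals, $\beta^{-q}-\alpha^{-q}=q\int_\alpha^\beta t^{-q-1}\,dt$ and $\beta^{(1-q)/2}-\alpha^{(1-q)/2}=\tfrac{1-q}{2}\int_\alpha^\beta t^{-(q+1)/2}\,dt$, and apply Cauchy--Schwarz to the second integral. This yields
\begin{equation*}
(\alpha-\beta)(\beta^{-q}-\alpha^{-q})\ge \frac{4q}{(q-1)^2}\bigl(\beta^{(1-q)/2}-\alpha^{(1-q)/2}\bigr)^2,
\end{equation*}
and since $\tfrac{4q}{(q-1)^2}-\tfrac{1}{q-1}=\tfrac{3q+1}{(q-1)^2}>0$ for $q>1$, subtracting the first term of the desired right-hand side of~(1) leaves a strictly positive \emph{reserve} $\tfrac{3q+1}{(q-1)^2}\tau_1\tau_2(\beta^{(1-q)/2}-\alpha^{(1-q)/2})^2$ available to absorb the antisymmetric correction. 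The same calculus identities applied with $q\in(0,1)$ produce the diagonal constant $\zeta(q)=4q/(1-q)$ of part~(2); the prefactor $\tfrac{1}{6}$ in $\zeta_1(q)=\tfrac{1}{6}\zeta(q)$ is then the share of $\zeta$ devoted to the diagonal bound, with the rest kept in reserve.

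The antisymmetric piece is handled by writing
\begin{equation*}
\tau_1\alpha^{1-q}-\tau_2\beta^{1-q}=\tfrac{1}{2}(\tau_1+\tau_2)(\alpha^{1-q}-\beta^{1-q})+\tfrac{1}{2}(\tau_1-\tau_2)(\alpha^{1-q}+\beta^{1-q}),
\end{equation*}
factoring $\alpha^{1-q}-\beta^{1-q}=(\alpha^{(1-q)/2}-\beta^{(1-q)/2})(\alpha^{(1-q)/2}+\beta^{(1-q)/2})$, and applying Young's inequality $|xy|\le\epsilon x^2+\tfrac{1}{4\epsilon}y^2$ to the product $(\tau_1+\tau_2)(\tau_1-\tau_2)(\alpha^{(1-q)/2}-\beta^{(1-q)/2})(\alpha^{(1-q)/2}+\beta^{(1-q)/2})$ with parameter $\epsilon$ tuned to the diagonal reserve. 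Using $(\tau_1+\tau_2)^2=4\tau_1\tau_2+(\tau_1-\tau_2)^2$ and $(\alpha^{(1-q)/2}+\beta^{(1-q)/2})^2\le 2(\alpha^{1-q}+\beta^{1-q})$, the cross term splits into a piece of the form $c_1\tau_1\tau_2(\beta^{(1-q)/2}-\alpha^{(1-q)/2})^2$ (absorbed by the reserve) plus a piece $c_2(\tau_1-\tau_2)^2(\alpha^{1-q}+\beta^{1-q})$; the explicit $\tfrac{1}{2}(\tau_1-\tau_2)^2(\alpha^{1-q}+\beta^{1-q})$ contribution from the decomposition adds a further $\tfrac{1}{2}$ to $c_2$. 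Undoing $\alpha=a/\tau_1$, $\beta=b/\tau_2$ then produces the claimed inequality.

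The main obstacle is not conceptual but quantitative: pinning down the precise values $\vartheta(q)=\max\{4,(6q-5)/2\}$ in~(1) and $\zeta_2(q)=\zeta(q)+9/q$ in~(2) requires choosing the Young parameter $\epsilon$ \emph{piecewise} in $q$, since near $q=1$ the $(q-1)^{-2}$-reserve term is enormous whereas for large $q$ the Young coefficient $\tfrac{1}{4\epsilon}$ is the binding constraint. The breakpoint $\max\{4,(6q-5)/2\}$ is exactly where the optimization switches regime, and the tight verification that the reserve dominates the absorbed symmetric piece uniformly in $q$ is the computational heart of the proof.
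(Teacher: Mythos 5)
First, note that the paper does not prove this lemma at all: it is quoted from the cited references (Felsinger--Kassmann for part (2), Zacher's weak Harnack papers for part (1)), so there is no in-paper argument to compare against; your attempt has to stand on its own. For part (1) it essentially does. Your identity $(b-a)(\tau_1^{q+1}a^{-q}-\tau_2^{q+1}b^{-q})=\tau_1\tau_2(\alpha-\beta)(\beta^{-q}-\alpha^{-q})-(\tau_1-\tau_2)(\tau_1\alpha^{1-q}-\tau_2\beta^{1-q})$ with $\alpha=a/\tau_1$, $\beta=b/\tau_2$ checks out, the Cauchy--Schwarz step gives the diagonal constant $\tfrac{4q}{(q-1)^2}$ and hence the reserve $\tfrac{3q+1}{(q-1)^2}\tau_1\tau_2(\alpha^{(1-q)/2}-\beta^{(1-q)/2})^2$, and the Young absorption closes: taking $\epsilon=\tfrac12$ for $q\le 5$ (allowed since then $\tfrac12\le\tfrac{3q+1}{2(q-1)^2}$) gives total off-diagonal coefficient $\tfrac32\le 4$, and for $q>5$ taking $\epsilon=\tfrac{3q+1}{2(q-1)^2}$ gives coefficient $1+\tfrac{(q-1)^2}{2(3q+1)}\le\tfrac{6q-5}{2}$; the degenerate cases $\tau_i=0$ are indeed trivial. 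So part (1), once written out, is a correct and self-contained proof, in fact with better constants than $\vartheta(q)$.

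Part (2) is where there is a genuine gap: "the same calculus identities" do not transfer. The weights enter as $\tau_i^{2}$ rather than $\tau_i^{q+1}$, so the substitution $\alpha=a/\tau_1$, $\beta=b/\tau_2$ no longer homogenizes the expression, and any natural splitting, e.g. $(b-a)(\tau_1^{2}a^{-q}-\tau_2^{2}b^{-q})=\tau_1\tau_2(b-a)(a^{-q}-b^{-q})+(b-a)(\tau_1-\tau_2)(\tau_1 a^{-q}+\tau_2 b^{-q})$, produces a correction containing $(b-a)a^{-q}$, which is \emph{not} dominated by $a^{1-q}+b^{1-q}$. Concretely, take $\tau_1=1$, $\tau_2=1+\delta$ with small $\delta>0$ and $b\gg a$: the correction is of size $\delta\, b\,a^{-q}$, while your absorbing terms --- the Cauchy--Schwarz reserve $c\,\tau_1\tau_2\bigl(b^{(1-q)/2}-a^{(1-q)/2}\bigr)^{2}\sim b^{1-q}$ and the allowance $\zeta_2(\tau_1-\tau_2)^2(a^{1-q}+b^{1-q})\sim\delta^{2}b^{1-q}$ --- are smaller by the unbounded factor $(b/a)^{q}$. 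The inequality survives only because the \emph{raw} product $\tau_1\tau_2(b-a)(a^{-q}-b^{-q})\sim b\,a^{-q}$ itself dominates the correction, which is exactly the information you discard when you replace the diagonal by its square-difference minorant and keep only a constant fraction "in reserve". (Your stated diagonal constant $4q/(1-q)$ is also off; Cauchy--Schwarz gives $4q/(1-q)^2$.) Fixing this forces one to keep the full product and argue by cases on the relative sizes of $\tau_1,\tau_2$ (and of $a,b$), which is precisely the more delicate bookkeeping done in the cited source and the origin of the constants $\zeta_1=\zeta/6$ and $\zeta_2=\zeta+9/q$. Incidentally, the statement of (2) as printed has $\tau_2$ in both slots of the square; for the way the lemma is used in (\ref{posiTheo8}) the intended term is $\bigl(\tau_2 b^{\frac{1-q}{2}}-\tau_1 a^{\frac{1-q}{2}}\bigr)^{2}$, and it is this version your proof would need to target.
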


\begin{lemma}\label{SobolevInequalityFrac}
Let $n = 2 \text{ or }3$, $\beta_{0} > 0$. Then there is a constant $S > 0$ such that for any $\beta \in (\beta_{0},1)$, $R>0$,
$\sigma = \frac{3}{3-2\beta}$ and $u\in H^{\beta}(B_{R})$ the following inequality holds:
\begin{align*}
\left( \int_{B_{R}}|u(x)|^{2\sigma}dx \right)^{1/\sigma} \leq & 2(1-\beta)S
\int_{B_{R}}\int_{B_{R}}\frac{|u(x)-u(y)|^{2}}{|x-y|^{n+2\beta}}dxdy    \\
& + SR^{-2\beta}\int_{B_{R}}u^{2}(x)dx.
\end{align*}
\end{lemma}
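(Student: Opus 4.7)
The statement is a fractional Sobolev–Poincaré inequality on a ball with exponent $2\sigma = 6/(3-2\beta)$, and the key subtlety is that every constant must be uniform in $\beta \in (\beta_0, 1)$; in particular the bound must not blow up as $\beta \uparrow 1$. The first thing I would note is a case dichotomy: for $n=3$, $2\sigma = 2n/(n-2\beta)$ is exactly the critical fractional Sobolev exponent, whereas for $n=2$ we have $2\sigma = 6/(3-2\beta) < 2/(1-\beta) = 2n/(n-2\beta)$, so the $n=2$ inequality is subcritical and should follow from the $n=3$-type sharp embedding plus Hölder on the unit ball.

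My plan has four steps. \textbf{(i) Scaling.} Set $v(y) := u(Ry)$. Elementary changes of variables give
\begin{align*}
\|v\|_{L^{2\sigma}(B_1)}^2 &= R^{-n/\sigma}\|u\|_{L^{2\sigma}(B_R)}^2, \\
[v]_{H^\beta(B_1)}^2 &= R^{2\beta-n}[u]_{H^\beta(B_R)}^2, \\
\|v\|_{L^2(B_1)}^2 &= R^{-n}\|u\|_{L^2(B_R)}^2,
\end{align*}
so it suffices to establish the $R=1$ case; the exponents balance precisely for $n=3$ to yield the claimed $R^{-2\beta}$, and for $n=2$ any residual $R$ factor is absorbed when interpolating. \textbf{(ii) Extension.} Build $Eu \in H^\beta(\mathbb{R}^n)$ with $\mathrm{supp}\,Eu \subset B_2$ by reflection across $\partial B_1$ times a smooth cutoff, and prove an extension bound
\begin{equation*}
[Eu]_{H^\beta(\mathbb{R}^n)}^2 \leq C\bigl([u]_{H^\beta(B_1)}^2 + \|u\|_{L^2(B_1)}^2\bigr), \quad C = C(n,\beta_0),
\end{equation*}
by splitting the Gagliardo integral into a near-diagonal part (where the lower bound $\beta \geq \beta_0$ controls the singularity uniformly) and a far part (where $|x-y|^{-n-2\beta}$ is bounded and brute $L^2$ control suffices).

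\textbf{(iii) Bourgain–Brezis–Mironescu normalized Sobolev inequality on $\mathbb{R}^n$.} The heart of the proof is the inequality
\begin{equation*}
\|w\|_{L^{2n/(n-2\beta)}(\mathbb{R}^n)}^2 \leq 2(1-\beta)\,S_0 \int_{\mathbb{R}^n}\int_{\mathbb{R}^n}\frac{|w(x)-w(y)|^2}{|x-y|^{n+2\beta}}\,dx\,dy,
\end{equation*}
with $S_0 = S_0(n,\beta_0)$, valid for all $w \in H^\beta(\mathbb{R}^n)$ and $\beta \in (\beta_0,1)$. I would derive this by writing the Gagliardo seminorm through the Fourier multiplier, so that $c_{n,\beta} [w]_{H^\beta(\mathbb{R}^n)}^2 = 2\|(-\Delta)^{\beta/2} w\|_{L^2}^2$, applying the classical Hardy–Littlewood–Sobolev / Riesz potential inequality (whose sharp constant depends tamely on $\beta$ away from $\beta=0$), and tracking the constant $c_{n,\beta}$: the factor $\Gamma(1-\beta)$ in its denominator satisfies $\Gamma(1-\beta) \sim (1-\beta)^{-1}$ as $\beta \uparrow 1$, which is exactly the origin of the $(1-\beta)$ factor in the statement. \textbf{(iv) Assembly.} Apply (iii) to $w = Eu$, control the right-hand side via (ii), restrict the left-hand side to $B_1$. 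For $n=3$, this is already the desired $R=1$ inequality. For $n=2$, Hölder on the bounded set $B_1$ gives $\|w\|_{L^{2\sigma}(B_1)} \leq C(\beta_0)\|w\|_{L^{2/(1-\beta)}(B_1)}$, yielding the claim. Finally undo the scaling from step (i).

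\textbf{Main obstacle.} The principal difficulty is establishing step (iii) with a Sobolev constant that remains bounded as $\beta \uparrow 1$; this is essentially Bourgain–Brezis–Mironescu and requires precise tracking of how $c_{n,\beta}$ absorbs the $(1-\beta)$ factor from $\Gamma(1-\beta)$, rather than merely invoking a fixed-$\beta$ Sobolev embedding. A secondary but nontrivial issue is producing the extension in step (ii) with a constant independent of $\beta \in (\beta_0,1)$, since standard references usually fix $\beta$; the reflection-plus-cutoff construction must be analyzed by splitting the Gagliardo double integral in a way that does not lose the uniformity.
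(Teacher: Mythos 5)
The paper never proves this lemma: it is quoted verbatim from the cited references (Felsinger--Kassmann; Zacher) in the appendix, so there is no in-paper argument to compare against and your proposal has to stand on its own. Your architecture (rescale to $R=1$, extend, apply a $\beta$-uniform Sobolev inequality carrying the $(1-\beta)$ normalization, restrict) is the standard one and it does work for $n=3$. But three of your steps contain genuine gaps.

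First, the $n=2$ endgame loses the $(1-\beta)$ factor, which is the entire point of the lemma. Your step (iii) produces the factor $(1-\beta)$ from $c_{n,\beta}\sim \mathrm{const}\cdot(1-\beta)$ only because, for $n=3$, the sharp constant in $\dot H^{\beta}(\mathbb{R}^{3})\hookrightarrow L^{6/(3-2\beta)}$ stays bounded as $\beta\uparrow 1$ (it involves $\Gamma(\tfrac{3-2\beta}{2})\to\Gamma(1/2)$). For $n=2$ the critical embedding $\dot H^{\beta}(\mathbb{R}^{2})\hookrightarrow L^{2/(1-\beta)}$ has sharp constant proportional to $\Gamma(1-\beta)\sim(1-\beta)^{-1}$, which exactly cancels the $(1-\beta)$ coming from $c_{2,\beta}$; passing through the critical exponent and then applying H\"older on $B_{1}$ therefore yields only $\|u\|^{2}_{L^{2\sigma}}\leq C[u]^{2}+C\|u\|^{2}$, without the $(1-\beta)$. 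The correct subcritical route for $n=2$ is a Gagliardo--Nirenberg interpolation $\|w\|_{L^{2\sigma}}\leq C\|(-\Delta)^{\beta/2}w\|^{\theta}\|w\|_{L^{2}}^{1-\theta}$ with $\theta=2/3$ independent of $\beta$, followed by Young's inequality with exponents $3/2$ and $3$; this keeps $(1-\beta)$ glued to the seminorm and leaves a uniform constant on the $L^{2}$ term.

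Second, your extension bound in step (ii) is false as stated. Take $u\equiv 1$ on $B_{1}$: then $[u]_{H^{\beta}(B_{1})}=0$ and $\|u\|_{L^{2}}$ is fixed, but $Eu$ is essentially the cutoff $\eta$, and $[\eta]^{2}_{H^{\beta}(\mathbb{R}^{n})}\sim(1-\beta)^{-1}$ (the near-diagonal integral $\int_{|z|\leq 1}|z|^{2-n-2\beta}dz=\omega_{n-1}/(2-2\beta)$ blows up). The correct statement is $[Eu]^{2}_{H^{\beta}(\mathbb{R}^{n})}\leq C\bigl([u]^{2}_{H^{\beta}(B_{1})}+(1-\beta)^{-1}\|u\|^{2}_{L^{2}(B_{1})}\bigr)$; this is still enough for the assembly because the extra $(1-\beta)^{-1}$ is killed by the $(1-\beta)$ from step (iii), but as written the claim is not achievable. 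Third, your remark that for $n=2$ ``any residual $R$ factor is absorbed when interpolating'' is wrong for $R>1$: testing the lemma with $u\equiv 1$ on $B_{R}$, $n=2$, gives left-hand side $\sim R^{2-4\beta/3}$ against right-hand side $\sim R^{2-2\beta}$, so the inequality as stated cannot hold with $S$ independent of $R$ as $R\to\infty$ (the scale-correct weight would be $R^{-4\beta/3}$). The statement is only true, and only used in the paper, for $R$ bounded by the diameter of a fixed domain; your proof should restrict to that regime (or correct the weight) rather than assert absorption.
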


\begin{lemma}\label{MoserFirst}
Let $\kappa > 1$, $\bar{p} \geq 1$, $C \geq 1$ and $\gamma > 0$. Suppose $f$ is a $\mu$-measurable function on $U_{1}$ such that
\begin{align*}
\|f\|_{L^{\beta\kappa}(U_{\sigma'})} \leq \left( \frac{C(1+\beta)^{\gamma}}{(\sigma-\sigma')^{\gamma}} \right)^{1/\beta}
\|f\|_{L^{\beta}(U_{\sigma})}, \quad 0<\sigma'<\sigma\leq 1, \,\,\beta > 0.
\end{align*}
Then there exist constants $M = M(C,\gamma,\kappa,\bar{p})$ and $\gamma_{0} = \gamma_{0}(\gamma,\kappa)$ such that
\begin{align*}
\esssup_{U_{\delta}}|f| \leq \left( \frac{M}{(1-\delta)^{\gamma_{0}}} \right)^{1/p}\|f\|_{L^{p}(U_{1})},
\quad \text{for all }\delta \in (0,1), \,\, p\in(0,\bar{p}].
\end{align*}
\end{lemma}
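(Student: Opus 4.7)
The plan is to run a standard Moser iteration on the given reverse Hölder-type estimate. Fix $p\in(0,\bar p\,]$ and $\delta\in(0,1)$. I set up two geometric sequences: a sequence of exponents
$$\beta_n = p\,\kappa^n,\qquad n=0,1,2,\ldots,$$
and a decreasing sequence of radii
$$\sigma_n = \delta + (1-\delta)\,2^{-n},\qquad \text{so } \sigma_0=1,\ \sigma_n\downarrow\delta,\ \sigma_n-\sigma_{n+1}=(1-\delta)\,2^{-(n+1)}.$$
The key observation is that $\beta_{n+1}=\beta_n\kappa$, so the hypothesis with $(\beta,\sigma,\sigma')=(\beta_n,\sigma_n,\sigma_{n+1})$ reads
$$\|f\|_{L^{\beta_{n+1}}(U_{\sigma_{n+1}})} \leq \left(\frac{C(1+\beta_n)^{\gamma}}{(\sigma_n-\sigma_{n+1})^{\gamma}}\right)^{1/\beta_n}\|f\|_{L^{\beta_n}(U_{\sigma_n})}.$$

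I would then iterate this estimate from $n=0$ up through $N-1$, obtaining
$$\|f\|_{L^{\beta_N}(U_{\sigma_N})}\leq \Biggl(\prod_{n=0}^{N-1}\left(\frac{C(1+\beta_n)^{\gamma}}{(\sigma_n-\sigma_{n+1})^{\gamma}}\right)^{1/\beta_n}\Biggr)\|f\|_{L^{p}(U_{1})}.$$
Taking logarithms, the log of the product becomes a single series
$$S_N := \sum_{n=0}^{N-1}\frac{1}{p\kappa^n}\Bigl(\log C + \gamma\log(1+p\kappa^n) + \gamma(n+1)\log 2 - \gamma\log(1-\delta)\Bigr).$$
Each of the four contributing series converges absolutely: the geometric factor $\kappa^{-n}$ dominates the logarithmic growth of $\log(1+p\kappa^n)$ and the linear growth of $n$, while $\sum \kappa^{-n}=\kappa/(\kappa-1)$. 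Summing and absorbing constants, I get
$$S_\infty \leq \frac{1}{p}\Bigl(A(C,\gamma,\kappa,\bar p) - \gamma_0\log(1-\delta)\Bigr)$$
for explicit $A$ and $\gamma_0=\gamma_0(\gamma,\kappa)=\gamma\cdot\kappa/(\kappa-1)$ (times a constant coming from the $\log 2$ factor), which is precisely the form required.

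Finally I would let $N\to\infty$: since $\beta_N\to\infty$ and $U_{\sigma_N}\supset U_\delta$ for every $N$, the left-hand side tends to $\operatorname{ess\,sup}_{U_\delta}|f|$ (using that $\|\cdot\|_{L^q(U_\delta)}\to\|\cdot\|_{L^\infty(U_\delta)}$ as $q\to\infty$, together with the fact that our iteration bounds the $L^{\beta_N}$-norms on the shrinking sets $U_{\sigma_N}$ which all contain $U_\delta$). The limit then reads
$$\operatorname{ess\,sup}_{U_\delta}|f|\leq \exp(S_\infty)\,\|f\|_{L^p(U_1)} \leq \left(\frac{M}{(1-\delta)^{\gamma_0}}\right)^{1/p}\|f\|_{L^p(U_1)},$$
with $M=M(C,\gamma,\kappa,\bar p)$ absorbing $e^{A}$ and the dependence on the upper bound $\bar p$ appearing only because we need a uniform handle on $\log(1+p\kappa^n)\leq \log(1+\bar p\,\kappa^n)$ in the first few terms of the series.

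The only mildly delicate point is the bookkeeping in the series $S_N$: verifying that all dependencies collapse into the claimed form $M(C,\gamma,\kappa,\bar p)$ and $\gamma_0(\gamma,\kappa)$, in particular that $\gamma_0$ does not pick up any $p$-dependence (which works because every occurrence of $p$ in the logarithms is multiplied by $1/p$ out front and the $\kappa^n$-growth dominates). No nontrivial analytic obstacle arises; the lemma is a pure iteration/summation exercise once the exponents $\beta_n$ and radii $\sigma_n$ are chosen geometrically.
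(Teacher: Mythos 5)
Your proposal is correct, and since the paper does not prove Lemma \ref{MoserFirst} at all but imports it from the cited references (Felsinger--Kassmann, Zacher), where it is established by exactly this geometric Moser iteration ($\beta_n=p\kappa^n$, dyadically shrinking radii $\sigma_n=\delta+(1-\delta)2^{-n}$, summation of the logarithms against the geometric weight $\kappa^{-n}$, then $\|f\|_{L^{\beta_N}(U_{\sigma_N})}\geq\|f\|_{L^{\beta_N}(U_\delta)}\to\esssup_{U_\delta}|f|$), your argument coincides with the standard one, including the correct isolation of the $p$-dependence as an overall $1/p$ exponent and the use of $\bar p$ only to bound $\log(1+p\kappa^n)$ uniformly. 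The sole inaccuracy is the parenthetical after $\gamma_0$: the $\log 2$ contributions from $(\sigma_n-\sigma_{n+1})^{-\gamma}=(1-\delta)^{-\gamma}2^{(n+1)\gamma}$ are summable and go into $M$, so $\gamma_0=\gamma\kappa/(\kappa-1)$ exactly, which still depends only on $(\gamma,\kappa)$ as required.
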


\begin{lemma}\label{MoserSecond}
Assume that $\mu_{1}(U) \leq 1$. Let $\kappa > 1$, $0 < p_{0} < \kappa$, and $C \geq 1$, $\gamma > 0$.
Suppose $f$ is a Lebesgue measure function on $U_{1}$ such that
\begin{align*}
\|f\|_{L^{\beta\kappa}(U_{\sigma'})} \leq \left( \frac{C}{(\sigma - \sigma)^{2}} \right)^{1/\beta}\|f\|_{L^{\beta}(U_{\sigma})},
\quad 0 < \sigma' < \sigma \leq 1,\,\, 0 < \beta \leq \frac{p_{0}}{\kappa} < 1.
\end{align*}
Then there exist constants $M = M(C,\gamma,\kappa)$ and $\gamma_{0} = \gamma_{0}(\gamma,\kappa)$ such that
\begin{align*}
\|f\|_{L^{p_{0}}(U_{\delta})} \leq \left( \frac{M}{(1-\delta)^{\gamma_{0}}} \right)^{1/p - 1/p_{0}}
\|f\|_{L^{p}(U_{1})} \quad \text{for all }\delta\in(0,1), \,\, p\in\left(0,\frac{p_{0}}{\kappa}\right].
\end{align*}
\end{lemma}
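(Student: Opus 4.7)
The plan is a standard finite Moser iteration, adapted to the fact that the hypothesis is only assumed for exponents $\beta \leq p_0/\kappa$. Fixing $p \in (0, p_0/\kappa]$, I would set up the geometric scales $\sigma_k = \delta + (1-\delta)2^{-k}$ (so $\sigma_0 = 1$, $\sigma_k \searrow \delta$, $\sigma_{k-1}-\sigma_k = (1-\delta)/2^k$) and exponents $\beta_k = p\kappa^k$. Each valid application of the hypothesis with $\beta = \beta_{k-1}$, $\sigma = \sigma_{k-1}$, $\sigma' = \sigma_k$ yields
\[
\|f\|_{L^{\beta_k}(U_{\sigma_k})} \leq \left(\frac{C \cdot 4^k}{(1-\delta)^{2}}\right)^{1/\beta_{k-1}} \|f\|_{L^{\beta_{k-1}}(U_{\sigma_{k-1}})}.
\]
I would iterate only while $\beta_{k-1} \leq p_0/\kappa$ and let $N$ denote the largest such index, so that $\beta_N \in (p_0/\kappa, p_0]$. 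Composing the inequalities gives a bound on $\|f\|_{L^{\beta_N}(U_{\sigma_N})}$ by a product of such factors times $\|f\|_{L^p(U_1)}$.

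Because the iterates land in the interval $(p_0/\kappa, p_0]$ rather than exactly at $p_0$, one extra step is needed. I would apply the hypothesis once more at the critical exponent $\beta = p_0/\kappa$ on the pair $(\sigma_N, \sigma_{N+1})$, after first using the assumption $\mu_1(U_1) \leq 1$ together with Hölder's inequality (valid since $p_0/\kappa \leq \beta_N$) to embed $\|f\|_{L^{p_0/\kappa}(U_{\sigma_N})} \leq \|f\|_{L^{\beta_N}(U_{\sigma_N})}$. This produces a bound on $\|f\|_{L^{p_0}(U_{\sigma_{N+1}})}$, and since $\sigma_{N+1} > \delta$ forces $U_\delta \subset U_{\sigma_{N+1}}$, we obtain control of $\|f\|_{L^{p_0}(U_\delta)}$ as desired.

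What remains is the accounting of constants. The total exponent on $1/(1-\delta)^2$ accumulated over all $N+1$ iterations equals $\sum_{k=1}^{N}1/\beta_{k-1} + \kappa/p_0$; the geometric sum $\sum_{k=0}^{N-1}(p\kappa^k)^{-1} = (1-\kappa^{-N})/(p(1-\kappa^{-1}))$ is precisely the right quantity, because $N$ was chosen so that $p\kappa^N \leq p_0 < p\kappa^{N+1}$, yielding $\kappa^{-N}$ comparable to $p/p_0$ and hence a sum of order $(1/p - 1/p_0)$ up to a constant depending only on $\kappa$. The $4^k$ contribution becomes $\sum k/\beta_{k-1}$, which is of the same order since the weight $k\kappa^{-k}$ still sums geometrically.

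The main obstacle is tracking these constants \emph{uniformly in} $p$: the number of iterations $N$ depends on $p$, and one must verify that both the base and the exponent in the final estimate can be repackaged exactly in the form $\bigl(M/(1-\delta)^{\gamma_0}\bigr)^{1/p - 1/p_0}$ with $M = M(C,\gamma,\kappa)$ and $\gamma_0 = \gamma_0(\gamma,\kappa)$ free of $p$. This amounts to showing that the residual constant $\kappa/p_0$ in the exponent and the finite prefactor from $4^{N+1}$ at the last step can be absorbed into $M^{1/p - 1/p_0}$; the inequality $1/p - 1/p_0 \geq (\kappa-1)/p_0$ (which holds since $p \leq p_0/\kappa$) provides the lower bound on $1/p - 1/p_0$ needed to carry out this absorption.
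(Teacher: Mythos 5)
The paper does not prove this lemma --- it is imported verbatim (typos included) from the cited literature \cite{FelKass2013,zacher2010weak,Zacher2012}, where the argument is precisely the finite Moser iteration you describe. Your proof is correct: the iteration with $\sigma_k=\delta+(1-\delta)2^{-k}$, $\beta_k=p\kappa^k$, stopping when $\beta_N\in(p_0/\kappa,p_0]$, the extra H\"older-plus-one-more-step to land exactly at $p_0$ (using $\mu(U_1)\le 1$), and the absorption of all $p$-dependent constants via $1/p-1/p_0\ge(\kappa-1)/p_0$ and $1/p\le\frac{\kappa}{\kappa-1}(1/p-1/p_0)$ is exactly how the standard proof goes, and you have correctly identified the one genuinely delicate point (uniformity in $p$ of the repackaged constants).
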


\begin{lemma}\label{LowerEstimateLog}
Let $I \subset \mathbb{R}$ and $\phi : \mathbb{R}^{n} \rightarrow [0,\infty)$ be a continuous function satisfying
$\supp\phi = \bar{B}_{R}$ for some $R > 0$ and $a(\phi,\phi) < \infty$. Then the following
computation rule holds for $w : I\times \mathbb{R} \rightarrow [0,\infty)$:
\begin{align*}
\mathcal{E}(w,-\phi^{2}w^{-1}) \geq & -3\mathcal{E}(\phi,\phi)    \\
& \geq \int_{B_{R}}\int_{B_{R}}\phi(x)\phi(y)\left( \log\frac{w(t,y)}{\phi(y)} - \log\frac{w(t,x)}{\phi(x)} \right)^{2}
k(x,y)dxdy.
\end{align*}
\end{lemma}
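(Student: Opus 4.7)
The plan is to reduce the lemma to a pointwise algebraic inequality for the integrand of $\mathcal{E}(w,-\phi^{2}w^{-1})$ and then handle the interior region $B_{R}\times B_{R}$ separately from the cross terms, where $\phi$ vanishes on the exterior and the integrand simplifies. Writing $v:=-\phi^{2}w^{-1}$, so $v\equiv 0$ on $B_{R}^{c}$, and exploiting the $x\leftrightarrow y$ symmetry of the integrand together with the fact that the $B_{R}^{c}\times B_{R}^{c}$ contribution vanishes, I would first decompose
\begin{equation*}
\mathcal{E}(w,v)=\tfrac{1}{2}\iint_{B_{R}\times B_{R}}(w(x)-w(y))(v(x)-v(y))k\,dxdy+\iint_{B_{R}\times B_{R}^{c}}(w(x)-w(y))\,v(x)\,k\,dxdy.
\end{equation*}
Set $a=w(x)$, $b=w(y)$, $\tau_{1}=\phi(x)$, $\tau_{2}=\phi(y)$.

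For the interior, the integrand is $(b-a)(\tau_{1}^{2}/a-\tau_{2}^{2}/b)$. Introducing $\alpha=\log(a/\tau_{1})$ and $\beta=\log(b/\tau_{2})$, the identity $\tau_{1}^{2}b/a+\tau_{2}^{2}a/b=\tau_{1}\tau_{2}(e^{\beta-\alpha}+e^{\alpha-\beta})=2\tau_{1}\tau_{2}\cosh(\beta-\alpha)$ yields the exact algebraic formula
\begin{equation*}
(b-a)\!\left(\tfrac{\tau_{1}^{2}}{a}-\tfrac{\tau_{2}^{2}}{b}\right)=4\tau_{1}\tau_{2}\sinh^{2}\!\tfrac{\beta-\alpha}{2}-(\tau_{1}-\tau_{2})^{2}.
\end{equation*}
Combined with the elementary inequality $4\sinh^{2}(t/2)\geq t^{2}$, this gives the sought pointwise lower bound
\begin{equation*}
(b-a)(\tau_{1}^{2}/a-\tau_{2}^{2}/b)\geq\tau_{1}\tau_{2}\bigl(\log(b/\tau_{2})-\log(a/\tau_{1})\bigr)^{2}-(\tau_{1}-\tau_{2})^{2}.
\end{equation*}
For the cross integrand, since $\phi(y)=0$ for $y\in B_{R}^{c}$, $(w(x)-w(y))v(x)=-\phi^{2}(x)+\phi^{2}(x)w(y)/w(x)\geq-\phi^{2}(x)=-(\phi(x)-\phi(y))^{2}$ by the nonnegativity of $w$ and $\phi$.

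Assembling these estimates and using the companion identity
\begin{equation*}
\mathcal{E}(\phi,\phi)=\tfrac{1}{2}\iint_{B_{R}\times B_{R}}(\phi(x)-\phi(y))^{2}k\,dxdy+\iint_{B_{R}\times B_{R}^{c}}(\phi(x)-\phi(y))^{2}k\,dxdy,
\end{equation*}
the penalty $-(\tau_{1}-\tau_{2})^{2}$ from the interior and the $-\phi^{2}(x)$ from the exterior combine into a multiple of $\mathcal{E}(\phi,\phi)$, leaving the log-integral as the main positive term. The finiteness assumption $\mathcal{E}(\phi,\phi)<\infty$ guarantees all the dominating quantities are integrable.

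The main obstacle will be pinning down the exact constants to match the stated bound (coefficient $1$ on the log integral and $-3$ on $\mathcal{E}(\phi,\phi)$): the naive assembly yields coefficient $1/2$ on the log term due to the factor $1/2$ in $\mathcal{E}$, so to reach the sharper form I expect to need a slightly finer splitting on the cross term (e.g.\ distinguishing $\{w(y)\leq w(x)\}$ from $\{w(y)\geq w(x)\}$ so as to exploit the nonnegative part $\phi^{2}(x)w(y)/w(x)$ more carefully, rather than discarding it). A secondary technical point is the degeneracy when $\phi$ or $w$ vanishes: the division by $w$ and the $\log$ are formally singular, which I would handle by first applying the algebraic inequality on $\{\phi>0\}\cap\{w>\varepsilon\}$ (consistent with the application in Theorem \ref{estimateLogU}, where $w$ is replaced by $w+\varepsilon$), checking that the pointwise bound degenerates to $0\geq -\tau_{2}^{2}$ on the zero set, and passing to the limit $\varepsilon\to 0^{+}$ by monotone convergence.
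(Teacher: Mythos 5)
The paper gives no proof of this lemma: it is stated in the appendix and attributed to \cite{FelKass2013,zacher2010weak,Zacher2012}, so there is nothing internal to compare against. Your argument is in substance the standard proof from those references, and it is correct: the symmetric decomposition into $B_{R}\times B_{R}$ and cross terms (the $B_{R}^{c}\times B_{R}^{c}$ part vanishes since $v=-\phi^{2}w^{-1}\equiv 0$ there), the exact identity $(b-a)(\tau_{1}^{2}/a-\tau_{2}^{2}/b)=4\tau_{1}\tau_{2}\sinh^{2}\tfrac{\beta-\alpha}{2}-(\tau_{1}-\tau_{2})^{2}$ together with $4\sinh^{2}(t/2)\ge t^{2}$, and the bound $(w(x)-w(y))v(x)\ge-\phi^{2}(x)=-(\phi(x)-\phi(y))^{2}$ on $B_{R}\times B_{R}^{c}$ assemble exactly as you describe. (Note in passing that the displayed statement is a typo: a chain $\mathcal{E}(w,-\phi^{2}w^{-1})\ge-3\mathcal{E}(\phi,\phi)\ge\iint(\cdots)^{2}k$ is impossible, since the last integral is nonnegative; the intended inequality is $\mathcal{E}(w,-\phi^{2}w^{-1})\ge\iint_{B_{R}\times B_{R}}\phi(x)\phi(y)(\cdots)^{2}k\,dxdy-3\mathcal{E}(\phi,\phi)$.)

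Two remarks on the point you flag as the main obstacle. The factor $\tfrac12$ you lose on the logarithmic integral is \emph{not} recoverable by a finer treatment of the cross term: it is lost in the interior term and is an artifact of the $\tfrac12$ in the paper's definition (2.10) of $\mathcal{E}$ (the cited sources normalize the form without it). For small oscillations your identity is essentially sharp, $(b-a)(\tau_{1}^{2}/a-\tau_{2}^{2}/b)\approx\tau_{1}\tau_{2}(\beta-\alpha)^{2}-(\tau_{1}-\tau_{2})^{2}$, so one half of it cannot dominate the full logarithmic integral minus any fixed multiple of $\mathcal{E}(\phi,\phi)$. The version you actually prove, with coefficient $\tfrac12$ on the logarithmic integral and penalty $-\mathcal{E}(\phi,\phi)\ge-3\mathcal{E}(\phi,\phi)$, is the correct statement for this normalization and is all that the proof of Theorem \ref{estimateLogU} uses, since only the existence of positive absolute constants matters there; so you should state and prove that version rather than chase the printed constants. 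Your handling of the degeneracies is fine: in the application $w=\tilde{u}\ge\epsilon>0$, and where $\phi$ vanishes the pointwise bound reduces to $\tau_{2}^{2}a/b-\tau_{2}^{2}\ge-(\tau_{1}-\tau_{2})^{2}$ under the convention that $\tau_{1}\tau_{2}(\log(b/\tau_{2})-\log(a/\tau_{1}))^{2}$ is read as $0$ there.
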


\begin{lemma}\label{WeightedPoincareInequality}
Let $\psi:B\rightarrow[0,1]$ belongs to $C_{0}^{1}(B)$ satisfies $\psi = 1$ in $\delta B$ with $\delta < 1$
and $k \in \mathcal{R}(\beta_{0},\Lambda)$ for some $\beta_{0}\in (0,1)$ and $\Lambda \geq 1$.
Then there is a positive constant $C(n,\beta_{0},\Lambda,\delta)$ such that for every $u \in L^{1}(B,\psi(x)dx)$
\begin{align*}
\int_{B}[u(x) - u_{\psi}]^{2}\psi dx \leq C \int_{B}\int_{B} [u(x)-u(y)]^{2}k(x,y)(\psi(x)\wedge\psi(y))dxdy,
\end{align*}
where
$$u_{\psi} = \frac{\int_{B}u(s)\psi(x)dx}{\int_{B}\psi(x)dx}.$$
\end{lemma}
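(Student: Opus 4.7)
The natural starting point is the symmetrization identity
$$\int_B (u-u_\psi)^2 \psi(x)\,dx = \frac{1}{2 \int_B \psi \, dx} \int_B \int_B (u(x)-u(y))^2 \psi(x)\psi(y)\,dx\,dy,$$
which I would obtain by expanding the square on the right and using the definition $u_\psi = (\int u\psi)/(\int\psi)$. Because $0 \leq \psi \leq 1$ we have the pointwise bound $\psi(x)\psi(y) \leq \psi(x) \wedge \psi(y)$, and because $\psi \equiv 1$ on $\delta B$ we have $\int_B \psi \geq |\delta B|$; together these yield
$$\int_B (u-u_\psi)^2 \psi\,dx \;\leq\; \frac{C(n,\delta)}{R^n} \int_B \int_B (u(x)-u(y))^2 (\psi(x)\wedge\psi(y))\,dx\,dy,$$
where $R$ is the radius of $B$. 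The remaining task is to absorb the kernel $k$ into the right-hand side, i.e.\ to establish
$$(\star)\qquad \int_B\int_B(u(x)-u(y))^2(\psi(x)\wedge\psi(y))\,dx\,dy \;\leq\; C \int_B\int_B(u(x)-u(y))^2 k(x,y)(\psi(x)\wedge\psi(y))\,dx\,dy.$$

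To attack $(\star)$ I would use the layer-cake formula $\psi(x) \wedge \psi(y) = \int_0^1 \mathbf{1}_{A_s}(x)\mathbf{1}_{A_s}(y)\,ds$ for the superlevel sets $A_s := \{x \in B : \psi(x) > s\}$, which satisfy $\delta B \subset A_s \subset B$ for every $s \in (0,1)$. By Fubini, $(\star)$ is implied by the uniform-in-$s$ inequality
$$\int_{A_s}\int_{A_s}(u(x)-u(y))^2\,dx\,dy \;\leq\; C\int_{A_s}\int_{A_s}(u(x)-u(y))^2 k(x,y)\,dx\,dy.$$
On $A_s \times A_s$ one has $|x-y| \leq 2R$, so $1 \leq (2R)^{n+2\beta}|x-y|^{-(n+2\beta)}$, and $k(x,y) \geq \tfrac{1}{2}k_0(x,y)$ by the assumption $a \geq 1/2$. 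Combining these two pointwise bounds with the Dirichlet-form comparison in (\ref{Assumption2}) converts the right-hand side into a lower bound involving the standard fractional Gagliardo seminorm on $A_s$, which in turn controls the unweighted double integral on $A_s \times A_s$.

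The main obstacle is that assumption (\ref{Assumption2}) is stated only over balls $B_\rho(x_0) \subset \Omega$, while the level sets $A_s$ are in general not balls. To bridge this gap, I would cover each $A_s$ by a uniformly bounded (in terms of $n$ and $\delta$) number of balls $B_i \subset B$ of comparable radii with uniformly bounded overlap, apply (\ref{Assumption2}) on each $B_i$ to pass from the $k_0$-Dirichlet form to the fractional Sobolev seminorm, and assemble the local estimates by summation. The delicate point is that the covering and overlap constants, and hence the final constant in $(\star)$, must remain bounded as $s \downarrow 0$ (when $A_s$ exhausts $B$); this is ensured because already a single covering of the full ball $B$ by balls contained in $B$ itself works for every $A_s$. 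Tracking dependences through these steps yields the constant $C = C(n,\beta_0,\Lambda,\delta)$ claimed in the lemma.
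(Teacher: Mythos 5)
The paper itself offers no proof of this lemma --- it is quoted from \cite{FelKass2013,zacher2010weak,Zacher2012} --- so your proposal can only be measured against the standard argument in those references, which it follows in outline. The first half is correct and is exactly the right sequence of moves: the variance identity $\int_B(u-u_\psi)^2\psi\,dx=\frac{1}{2\int_B\psi}\int_B\int_B(u(x)-u(y))^2\psi(x)\psi(y)\,dx\,dy$, the pointwise bound $\psi(x)\psi(y)\le\psi(x)\wedge\psi(y)$, the lower bound $\int_B\psi\ge|\delta B|$, and the layer-cake formula $\psi(x)\wedge\psi(y)=\int_0^1\mathbf{1}_{A_s}(x)\mathbf{1}_{A_s}(y)\,ds$ reduce the lemma to a comparison, uniform in $s$, of the unweighted and $k$-weighted Dirichlet forms on each superlevel set $A_s=\{\psi>s\}$.

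The gap is in that last comparison. After the layer-cake step the quantity you control is $\int_{A_s}\int_{A_s}(u(x)-u(y))^2k(x,y)\,dx\,dy$, so you must pass from the $k_0$-form \emph{restricted to $A_s\times A_s$} down to the Gagliardo seminorm on $A_s$; hypothesis (\ref{Assumption2}) provides this only when the underlying set is a ball. Your covering device does not bridge this. If the balls $B_i\subset B$ merely cover $A_s$ (your claim that ``a single covering of the full ball $B$ works for every $A_s$''), then $\int_{B_i}\int_{B_i}(u(x)-u(y))^2k_0\,dx\,dy$ involves pairs lying outside $A_s\times A_s$ and is \emph{not} dominated by the $k_0$-form over $A_s\times A_s$ that actually sits on your right-hand side, so applying (\ref{Assumption2}) on such $B_i$ proves nothing. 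If instead you require $B_i\subset A_s$, a general superlevel set of a $C_0^1$ function (which may be disconnected or have thin pieces; only $\delta B\subset A_s\subset B$ is guaranteed) cannot be covered by boundedly many such balls; and even granting a good cover, recovering the full double integral $\int_{A_s}\int_{A_s}(u(x)-u(y))^2\,dx\,dy$ from the per-ball contributions requires a chaining argument comparing averages over overlapping balls, which you do not supply. The standard resolution --- and the form in which the lemma is proved in the cited references and used in Theorem \ref{estimateLogU} --- is to take $\psi$ radially symmetric and nonincreasing, so that every $A_s$ is itself a ball containing $\delta B$ and (\ref{Assumption2}) applies to it verbatim together with the pointwise bound $1\le(2R)^{n+2\beta}|x-y|^{-(n+2\beta)}$ and $k\ge\tfrac12 k_0$. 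With that (harmless for the application) extra hypothesis your argument closes; without it, the covering step as described fails.
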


\begin{lemma}\label{logAppendix}
Let $\delta,\eta \in (0,1)$, and let $\gamma, C$ be positive constants and $0 < \xi_{0} \leq \infty$.
Suppose $f$ is a positive $\mu$-measurable function on $U_{1}$ which satisfies the following two conditions:
\begin{enumerate}
  \item
  \begin{align*}
  \|f\|_{L^{\xi_{0}}(U_{\sigma'})} \leq \left( C(\sigma-\sigma')^{-\gamma}\mu(U_{1})^{-1} \right)^{1/\xi - 1/\xi_{0}}\|f\|_{L^{\xi}(U_{\sigma})},
  \end{align*}
  for all $\sigma$, $\sigma'$, $\beta$ such that $0 < \delta \leq \sigma' < \sigma \leq 1$ and
  $0 < \xi \leq \min\{1,\eta\xi_{0}\}$.
  \item
  \begin{align*}
  \mu(\{\log f > \lambda\}) \leq C\mu(U_{1})\lambda^{-1}
  \end{align*}
  for all $\lambda > 0$.
\end{enumerate}
Then
\begin{align*}
\|f\|_{L^{\xi_{0}}(U_{\delta})} \leq M \mu(U_{1})^{1/\xi_{0}},
\end{align*}
where $M$ depends only on $\delta$,$\eta$,$\gamma$,$C$ and $\xi_{0}$.
\end{lemma}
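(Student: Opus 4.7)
The plan is to combine the reverse-H\"older estimate in the first hypothesis with the weak-$L^1$ bound on $\log f$ in the second via a Moser-type iteration of Bombieri--De Giorgi--Giusti type. I would first normalize by introducing the probability measure $\tilde\mu := \mu/\mu(U_1)$ and the scale-invariant quantities $N^p(\sigma) := \bigl(\int_{U_\sigma} f^p\,d\tilde\mu\bigr)^{1/p}$. In this notation the first hypothesis reads
\begin{equation*}
N^{\xi_0}(\sigma') \leq \bigl[C(\sigma-\sigma')^{-\gamma}\bigr]^{1/\xi - 1/\xi_0} N^{\xi}(\sigma),
\end{equation*}
the second reads $\tilde\mu\{\log f > \lambda\} \leq C/\lambda$ for $\lambda > 0$, and the conclusion is reduced to showing $N^{\xi_0}(\delta) \leq M$ for a constant depending only on $\delta,\eta,\gamma,C,\xi_0$.

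The decisive interpolation step is a layer-cake decomposition at height $e^\lambda$. For any $\lambda > 0$ and any admissible $\xi$, splitting $\int_{U_\sigma} f^\xi\,d\tilde\mu$ according to whether $f \leq e^\lambda$ or $f > e^\lambda$, bounding the low part by $e^{\xi\lambda}$, and treating the high part via H\"older's inequality with exponents $\xi_0/\xi$ and $\xi_0/(\xi_0-\xi)$ together with the weak-$L^1$ bound on $\{\log f > \lambda\}$ yields
\begin{equation*}
N^\xi(\sigma)^\xi \leq e^{\xi\lambda} + N^{\xi_0}(\sigma)^\xi \,(C/\lambda)^{1-\xi/\xi_0}.
\end{equation*}
Substituting this into the normalized first hypothesis gives the master inequality
\begin{equation*}
N^{\xi_0}(\sigma')^\xi \leq \bigl[C(\sigma-\sigma')^{-\gamma}\bigr]^{1-\xi/\xi_0}\Bigl(e^{\xi\lambda} + N^{\xi_0}(\sigma)^\xi \,(C/\lambda)^{1-\xi/\xi_0}\Bigr).
\end{equation*}

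With the master inequality in hand, I would iterate along a nested sequence $\delta = \sigma_0 < \sigma_1 < \sigma_2 < \cdots$. At each step the level $\lambda_k$ is chosen so that the coefficient multiplying $N^{\xi_0}(\sigma_{k+1})^\xi$ is at most $\tfrac12$; this forces $\lambda_k$ to scale like a constant times $(\sigma_{k+1}-\sigma_k)^{-\gamma}$. Iterating and telescoping gives
\begin{equation*}
N^{\xi_0}(\delta)^\xi \leq \sum_{k \geq 0} 2^{-k}\bigl[C(\sigma_{k+1}-\sigma_k)^{-\gamma}\bigr]^{1-\xi/\xi_0} e^{\xi\lambda_k} + \lim_{K \to \infty} 2^{-K} N^{\xi_0}(\sigma_K)^\xi,
\end{equation*}
in which the residual vanishes provided $N^{\xi_0}(\sigma)$ remains finite for $\sigma$ slightly larger than the endpoint of the iteration, a regularity consequence of applying the first hypothesis with a finite seed.

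The main obstacle is making the geometric sum above converge, because the source term $e^{\xi\lambda_k}$ grows doubly-exponentially in $k$ if the spacing $\sigma_{k+1}-\sigma_k$ is taken geometric and $\xi$ is held fixed. The Bombieri--De Giorgi--Giusti resolution is to exploit the joint freedom in the exponent $\xi \in (0,\min(1,\eta\xi_0)]$ and in the spacing, calibrating them so that the product $\xi\lambda_k$ stays uniformly bounded in $k$. Once this calibration is performed each summand is polynomial in $k$, the factor $2^{-k}$ forces summability, and taking $\xi$th roots together with the identity $\|f\|_{L^{\xi_0}(U_\delta)} = \mu(U_1)^{1/\xi_0}\,N^{\xi_0}(\delta)$ produces the stated estimate. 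The case $\xi_0 = \infty$ is handled identically by interpreting $1/\xi_0$ as $0$ throughout and reading the $L^{\xi_0}$ norm as the essential supremum.
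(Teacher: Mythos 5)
The paper offers no proof of this lemma at all --- it is quoted from the literature (it is the Bombieri--De Giorgi--Giusti abstract iteration lemma in the form used by Moser and by Zacher/Felsinger--Kassmann), so your attempt must stand on its own. Your normalization, the layer-cake decomposition at height $e^{\lambda}$, and the resulting master inequality $N^{\xi_0}(\sigma')^{\xi}\leq[C(\sigma-\sigma')^{-\gamma}]^{1-\xi/\xi_0}(e^{\xi\lambda}+N^{\xi_0}(\sigma)^{\xi}(C/\lambda)^{1-\xi/\xi_0})$ are all correct, and you correctly locate the difficulty. But the closing iteration does not work as written, and the proposed repair does not save it. With a fixed $\xi$, making the coefficient of $N^{\xi_0}(\sigma_{k+1})^{\xi}$ at most $\tfrac12$ forces $\lambda_k\gtrsim(\sigma_{k+1}-\sigma_k)^{-\gamma}\to\infty$ (infinitely many scales must fit in $[\delta,1]$), so the sources $e^{\xi\lambda_k}$ destroy the series, as you say. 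If instead $\xi=\xi_k$ varies so that $\xi_k\lambda_k$ stays bounded, then step $k$ controls $N^{\xi_0}(\sigma_k)^{\xi_k}$ by $N^{\xi_0}(\sigma_{k+1})^{\xi_k}$ while step $k{+}1$ controls $N^{\xi_0}(\sigma_{k+1})^{\xi_{k+1}}$ with a different exponent, so the additive telescoping $\sum_k 2^{-k}b_k$ you display is simply not available; and if one takes $\xi_k$-th roots and chains multiplicatively, each step contributes a factor of order $e^{\lambda_k}$, which again diverges. So neither reading closes the argument.

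The missing idea is that the truncation level must be tied to the unknown itself, not to the geometry: one sets $\psi(\sigma)=\log N^{\xi_0}(\sigma)$, chooses $\lambda=\theta\psi(\sigma)$ with $\theta\in(0,1)$ and $\xi\approx(1-\eta)\log(\theta\psi(\sigma)/C)/((1-\theta)\psi(\sigma))$, so that $\xi\lambda$ is only logarithmically large and $e^{\xi\lambda}=N^{\xi_0}(\sigma)^{\xi\theta}$ is reabsorbed. This yields the dichotomy: either $\psi(\sigma)\leq\Lambda(\sigma-\sigma')$ for an explicit $\Lambda$, or $\psi(\sigma')\leq\theta'\psi(\sigma)$ with a fixed $\theta'<1$; iterating along geometrically spaced $\sigma_j\uparrow1$ with the ratio calibrated against the growth of $\Lambda(\sigma_{j+1}-\sigma_j)$ then bounds $\psi(\delta)$ by a universal constant. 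Two secondary points also need attention: the argument requires the qualitative finiteness $N^{\xi_0}(\sigma)<\infty$ for $\sigma<1$, and your claim that this follows from ``applying the first hypothesis with a finite seed'' is unjustified --- hypothesis (2) does not even give $f^{\xi}\in L^{1}(U_1)$ for small $\xi>0$ (the tail integral $\int^{\infty}\xi t^{\xi-1}(\log t)^{-1}dt$ diverges), so finiteness must be imported from the application, where $f$ is a priori bounded; and the residual term $2^{-K}N^{\xi_0}(\sigma_K)^{\xi}$ you discard is therefore also not controlled by your hypotheses alone.
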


\begin{lemma}\label{ConvergenceAppendix}
Let $u$ be a weak supersolution to equation (\ref{TimeSpaceEquation1}).
Let $\phi\in H_{e}^{1,\beta}(Q_{T})$ be a test function. Then for every $I'\subset\subset I = [0,T]$
\begin{align*}
\int_{I'}\mathcal{E}(h_{m}*u(t,\cdot),\phi(t,\cdot))dt \rightarrow \int_{I'}\mathcal{E}(u(t,\cdot),\phi(t,\cdot))dt,
\quad \text{as }m\rightarrow \infty.
\end{align*}
\end{lemma}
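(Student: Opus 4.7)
The plan is to reduce the claim to two ingredients already stated in the paper: (i) the bilinearity of $\mathcal{E}(\cdot,\cdot)$ together with its continuity on $H_e^{\beta}(\Omega)\times H_e^{\beta}(\Omega)$, and (ii) the approximation property of the Yosida kernel $h_m$ in vector-valued $L^p$ spaces. The bilinear form $\mathcal{E}$ acts only in the spatial variable, so it is natural to freeze $t$, estimate pointwise, and then integrate.

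First, for a.e.\ $t\in I'$, bilinearity in the first slot gives
\[
\mathcal{E}\bigl(h_m*u(t,\cdot),\phi(t,\cdot)\bigr)-\mathcal{E}\bigl(u(t,\cdot),\phi(t,\cdot)\bigr)=\mathcal{E}\bigl((h_m*u-u)(t,\cdot),\phi(t,\cdot)\bigr),
\]
where $(h_m*u)(t,\cdot):=\int_0^t h_m(t-s)u(s,\cdot)\,ds$ is interpreted as a Bochner integral with values in the separable Hilbert space $H_e^{\beta}(\Omega)$. Since $u\in V_p([0,T];\Omega)\subset L^2(I;H_e^{\beta}(\Omega))$ and $h_m\in L^1([0,T])$, Young's inequality for time convolution yields $h_m*u\in L^2(I;H_e^{\beta}(\Omega))$, and the convolution commutes with the extension by zero off $\Omega$ because it acts only in time.

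Second, the continuity estimate recorded in Section~\ref{WeakSolutionSection}, $|\mathcal{E}(v,w)|\le C\,\|v\|_{H_e^{\beta}(\Omega)}\|w\|_{H_e^{\beta}(\Omega)}$, applied pointwise with $v=(h_m*u-u)(t,\cdot)$ and $w=\phi(t,\cdot)$, followed by Cauchy--Schwarz in $t$, gives
\[
\int_{I'}\bigl|\mathcal{E}\bigl(h_m*u-u,\phi\bigr)(t)\bigr|\,dt\le C\,\|h_m*u-u\|_{L^2(I;H_e^{\beta}(\Omega))}\,\|\phi\|_{L^2(I;H_e^{\beta}(\Omega))},
\]
and the second factor is finite because $\phi\in H_e^{1,\beta}(Q_T)\subset L^2(I;H_e^{\beta}(\Omega))$. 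Invoking the property of $h_m$ listed in Section~\ref{2YoshidaSection}, namely that $h_m*f\to f$ in $L^p([0,T];X)$ for any $f\in L^p([0,T];X)$ and any Banach space $X$, with $X=H_e^{\beta}(\Omega)$ and $p=2$, the first factor tends to zero, which establishes the claim.

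The only subtle point is confirming that the abstract $h_m$-convergence is applicable in $L^2(I;H_e^{\beta}(\Omega))$. This needs $H_e^{\beta}(\Omega)$ to be a separable Banach space (true since it is a closed subspace of the separable Hilbert space $H^{\beta}(\mathbb{R}^n)$) and $u$ to be strongly measurable as an $H_e^{\beta}(\Omega)$-valued map on $I$, which follows from $u\in V_p([0,T];\Omega)$. No other obstacles arise; the rest is a direct bilinearity and Cauchy--Schwarz calculation.
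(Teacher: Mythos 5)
Your argument is correct within the paper's stated framework, but it takes a genuinely different route from the paper's own proof. You argue globally: bilinearity reduces the claim to $\mathcal{E}\bigl((h_m*u-u)(t,\cdot),\phi(t,\cdot)\bigr)$, the boundedness $|\mathcal{E}(v,w)|\le C\|v\|_{H_e^{\beta}(\Omega)}\|w\|_{H_e^{\beta}(\Omega)}$ recorded in Section \ref{WeakSolutionSection} plus Cauchy--Schwarz in $t$ gives a bound by $\|h_m*u-u\|_{L^{2}(I;H_e^{\beta}(\Omega))}\|\phi\|_{L^{2}(I;H_e^{\beta}(\Omega))}$, and the Yosida property $h_m*f\to f$ in $L^{p}([0,T];X)$ from Section \ref{2YoshidaSection} with $X=H_e^{\beta}(\Omega)$ finishes. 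The paper never invokes that global continuity of $\mathcal{E}$: it decomposes the double integral into a near-diagonal part over $B\times B$ and a far part over $B\times B^{c}$, estimates the near part by Cauchy--Schwarz with the weight $k_0^{1/2}$ and the local comparability (\ref{Assumption2}), and controls the far part with the tail bound (\ref{Assumption1}) together with the $L^{\infty}$ bounds on $\phi$ and on $u$ (which are built into the definitions of test function and weak solution), applying the Yosida convergence separately in $L^{2}(I';L^{2}(B\times B))$ and $L^{1}(I';L^{\infty})$. Your version is shorter and needs neither $u\in L^{\infty}$ nor $\phi\in L^{\infty}$, but it leans entirely on the estimate $\mathcal{E}(v,w)\le C\|v\|_{H_e^{\beta}}\|w\|_{H_e^{\beta}}$, which Section \ref{WeakSolutionSection} asserts without proof; since (\ref{Assumption1})--(\ref{Assumption2}) are stated only for balls contained in $\Omega$, verifying that global bound (in particular the off-diagonal contribution for points near $\partial\Omega$) requires an additional splitting or Hardy-type argument, and the appendix proof is evidently structured precisely to avoid that point by working with the kernel locally. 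So your proof is acceptable given the paper's asserted ingredients, while the paper's near/far decomposition is the more self-contained argument under the weak kernel hypotheses.
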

\begin{proof}
Let $V(t,x,y) = u(t,x)-u(t,y)$, $(h_{m}*V)(t,x,y) = (h_{m}*u)(t,x) - (h_{m}*u)(t,y)$ and $\Phi(t,x,y) = \phi(t,x) - \phi(t,y)$.
Denote $B_{R}$ is a ball with radius $R > 0$, for some fixed $\epsilon > 0$,
denote $B := B_{R+\epsilon}$ as a ball with radius $R+\epsilon$. Decompose the integral over $\mathbb{R}^{n}\times\mathbb{R}^{n}$ yields
\begin{align*}
& \int_{I'}\mathcal{E}((h_{m}*u-u)(t,\cdot),\phi(t,\cdot))dt \\
= & \int_{I'}\int_{B}\int_{B}((h_{m}*V)(t,x,y) - V(t,x,y))\Phi(t,x,y)k(x,y)dxdydt \\
& + 2 \int_{I'}\int_{B}\phi(t,x)\int_{B^{c}}((h_{m}*V)(t,x,y) - V(t,x,y))k(x,y)dydxdt  \\
=: & \text{I}_{m} + \text{II}_{m}.
\end{align*}
For $\text{I}_{1}$, we have
\begin{align*}
\text{I}_{m} & \leq C \|(h_{m}*V-V)k_{0}^{1/2}\|_{L^{2}(I';L^{2}(B\times B))}\|\Phi k_{0}^{1/2}\|_{L^{2}(I';L^{2}(B\times B))} \\
& \leq C \|(h_{m}*V-V)k_{0}^{1/2}\|_{L^{2}(I';L^{2}(B\times B))}\|\Phi\|_{L^{2}(I';H^{\beta}(B))},
\end{align*}
where we have used (\ref{Assumption2}) in the second inequality. The convergence properties shown in Section \ref{2YoshidaSection}
implies that the first factor of the above inequality tends to zero.
Using (\ref{Assumption1}), we could obtain
\begin{align*}
\text{II}_{m} & \leq \|\phi\|_{L^{\infty}(I'\times B)}\int_{I'}
\|h_{m}*V(t,\cdot,\cdot)-V(t,\cdot,\cdot)\|_{L^{\infty}(\mathbb{R}^{n}\times\mathbb{R}^{n})}\int_{B_{R}}\int_{B^{c}}
k_{0}(x,y)dydxdt     \\
& \leq C \epsilon^{-2\beta}|B_{R}|\|\phi\|_{L^{\infty}(I'\times B)}\|h_{m}*V-V\|_{L^{1}(I';L^{\infty}(\mathbb{R}^{n}\times\mathbb{R}^{n}))}.
\end{align*}
The convergence of $\text{II}_{m}$ follows from the convergence properties shown in Section \ref{2YoshidaSection}.
Now the proof is complete.
\end{proof}

\section*{Acknowledgements}

J. Jia was supported by the National Natural Science Foundation of China under grant no. 11501439 and
the postdoctoral science foundation project of China under grant no. 2015M580826.
J. Peng was supported partially by National Natural Science Foundation of China under grant no. 11131006, 41390454 and 91330204.


\bibliographystyle{plain}
\bibliography{references}

\end{document}